\newcommand{\mB}{\mathcal{B}}
\newcommand{\mE}{\mathcal{E}}
\newcommand{\mM}{\mathcal{M}}
\newcommand{\HH}{\mathbf{H}}
\newcommand{\CC}{\mathbf{C}}
\newcommand{\RR}{\mathbf{R}}
\newcommand{\QQ}{\mathbf{Q}}
\newcommand{\PP}{\mathbf{P}}
\newcommand{\ZZ}{\mathbf{Z}}
\newcommand{\id}{\mathrm{Id}}
\newcommand{\ev}{\mathrm{ev}}
\newcommand{\Aut}{\mathrm{Aut}}
\newcommand{\Obs}{\mathcal{O}}
\newcommand{\Stab}{\mathrm{Stab}}
\newcommand{\dbar}{\overline{\partial}}
\newcommand{\coker}{\mathrm{coker}}
\newcommand{\dvol}{\mathrm{dvol}}
\newcommand{\GW}{\operatorname{GW}}
\newcommand{\WW}{W}
\newcommand{\torus}{\ZZ^2\backslash\RR^2}
\newcommand{\nilman}{\Gamma\backslash N}
\newcommand{\orient}{\mathfrak{o}}
\newcommand{\nn}{\mathfrak{n}}
\renewcommand{\aa}{\mathfrak{a}}
\newcommand{\zz}{\mathfrak{z}}
\newcommand{\bb}{\mathfrak{b}}
\newcommand{\pp}{\mathfrak{p}}
\newcommand{\qq}{\mathfrak{q}}
\renewcommand{\tt}{\mathfrak{t}}
\renewcommand{\Upsilon}{S}
\newcommand{\sgn}{\mathrm{sgn}}
\newcommand{\orvar}{Q}
\renewcommand{\phi}{\varphi}
\newcommand{\OP}{\operatorname}
\numberwithin{equation}{section}
\newtheorem{thm}[equation]{Theorem}
\newtheorem{lma}[equation]{Lemma}
\newtheorem{prp}[equation]{Proposition}
\newtheorem{cor}[equation]{Corollary}
\theoremstyle{definition}
\newtheorem{ass}[equation]{Assumption}
\newtheorem{dfn}[equation]{Definition}
\newtheorem{exm}[equation]{Example}
\theoremstyle{remark}
\newtheorem{rmk}[equation]{Remark}
\title[Pseudoholomorphic tori in Kodaira-Thurston]{Pseudoholomorphic tori \\ in the Kodaira-Thurston manifold}
\author{Jonathan David Evans}
\email{j.d.evans@ucl.ac.uk}
\address{Department of Mathematics, University College London, Gower Street, London, WC1E 6BT}
\author{Jarek K\k{e}dra}
\email{kedra@abdn.ac.uk}
\address{University of Aberdeen and University of Szczecin}
\begin{document}
\begin{abstract}
The Kodaira-Thurston manifold is a quotient of a nilpotent Lie group by a cocompact lattice. We compute the family Gromov-Witten invariants which count pseudoholomorphic tori in the Kodaira-Thurston ma\-ni\-fold. For a fixed symplectic form the Gromov-Witten invariant is trivial so we consider the twistor family of left-invariant symplectic forms which are orthogonal for some fixed metric on the Lie algebra. This family defines a loop in the space of symplectic forms. This is the first example of a genus one family Gromov-Witten computation for a non-K\"ahler manifold.
\end{abstract}
\maketitle


\section{Introduction}

The enumerative geometry of complex curves in complex manifolds is an old and venerable pursuit \cite{Klei} which gained momentum in the last fifteen years of the twentieth century for two main reasons. The first of these was Gromov's paper \cite{Gro} which explained how to count persistent pseudoholomorphic curves in symplectic manifolds and gave many applications of the existence of pseudoholomorphic curves to symplectic topology. The second was Witten's recasting \cite{W} of Gromov's theory in the language of topological sigma models (Gromov-Witten theory, see \cite{RT1,RT2} for a mathematical approach) and the subsequent observation \cite{COGP} of Candelas-de la Ossa-Green-Parks that string dualities give concrete predictions in enumerative geometry for rational (genus zero) curves in Calabi-Yau three-folds. In the case of higher genus curves, predictions were made by Bershadksy, Cecotti, Ooguri and Vafa \cite{BCOV}. In many places these predictions have been confirmed by computations: see for example \cite{Z} in the genus one case.

The enumerative invariants which emerge have beautiful structural properties, for instance Bryan and Leung \cite{BL} computed the Gromov-Witten invariants for the hyper-K\"{a}hler sphere of K3 surfaces (a family Gromov-Witten invariant, see Section \ref{famGW} below) and showed that the generating function for these numbers (the {\em genus $g$ Gromov-Witten potential}) is

\[\left(q\frac{d}{dq}G_2(q)\right)^g/\Delta(q)\]
where $G_2(q)=-\frac{1}{24}+\sum_{n\geq 1}\sigma_1(n)q^n$, $\sigma_1(n)=\sum_{d|n}d$ is the sum of divisors of $n$ and $\Delta(q)=q\prod_{n\geq 1}(1-q^n)^{24}$. These are the weight 2 Eisenstein series quasimodular form and the discriminant modular form respectively.

The physical predictions make use of the special geometry of Calabi-Yau manifolds, but Gromov's philosophy\footnote{``What fascinated me even more was the familiar web of algebraic curves in a surface emerging in its full beauty in the softish environment of general (nonintegrable!) almost complex structures. (Integrability had always made me feel claustrophobic.)'' \cite{GromAMS}} is that integrability of an almost complex structure is not necessary to have an intricate enumerative theory of holomorphic curves. One might ask what physical predictions hold in the world of non-K\"{a}hler symplectic manifolds and whether similarly beautiful formulae can be found for their Gromov-Witten potentials. One large and well-understood class of symplectic manifolds containing non-K\"{a}hler examples are the symplectic nilmanifolds: compact left-quotients of a nilpotent Lie group equipped with a left-invariant symplectic form. Apart from the higher-dimensional tori these are all non-K\"{a}hler \cite{BG}; the best known is the four-dimensional Kodaira-Thurston manifold \cite{Th} which will be the focus of this paper.

At the time of writing, mirror symmetry is not known for the Kodaira-Thurston manifold. In particular, it is not clear to the authors what the genus one partition function of the mirror B-model might be. It would be interesting to compute this and compare with the Gromov-Witten invariants we calculate here, as a first step towards understanding the conjectures of \cite{BCOV} in a non-K\"{a}hler setting.

The Gromov-Witten invariants of a single symplectic nilmanifold are not very interesting. In many cases (like the Kodaira-Thurston manifold) one can connect a left-invariant symplectic form $\omega$ to $-\omega$ along a path of left-invariant symplectic forms. Since Gromov-Witten invariants are invariant under deformations of the symplectic form, if there is a non-zero homology class $A$ with a non-zero Gromov-Witten invariant then there is a $J_+$-holomorphic curve representing $A$ (where $J_+$ is $\omega$-compatible) and a $J_-$-holomorphic curve representing $A$ (where $J_-$ is $-\omega$-compatible). Since non-constant $J$-holomorphic curves have positive $\omega$-area, it follows that $\int_{A}\omega$ is both positive and negative, a contradiction.

However, if we allow families of symplectic manifolds and of almost complex structures, and if we count holomorphic curves which are holomorphic for some $J$ in the family, then we can still obtain non-zero invariants. These {\em family Gromov-Witten invariants} have been defined and computed in various places in the literature, for instance the Bryan-Leung computation mentioned above; we recall the definition in Section \ref{famGW}. Moreover there are certain natural families of left-invariant symplectic forms on nilmanifolds (the {\em twistor families}). This paper provides techniques for computing the genus one family Gromov-Witten invariants for the twistor families of symplectic nilmanifolds. We do the complete computation for the Kodaira-Thurston manifold.

\begin{dfn}[Kodaira-Thurston manifold]
Let $\Gamma$ be the group of affine transformations of $\RR^4$ (with coordinates $x,y,z,t$) generated by three translations (in the $y$, $z$ and $t$ directions) and the map
\[x\mapsto x+1,\ y\mapsto y,\ z\mapsto z+y,\ t\mapsto t\]
The quotient $\Gamma\backslash\RR^4$ is the {\em Kodaira-Thurston manifold}, $K$. It is also a left-quotient of a simply-connected nilpotent Lie group $N$ by a cocompact lattice $\Gamma\subset N$; see Section \ref{KT}.
\end{dfn}
The twistor family for $K$ is the circle of symplectic forms
\[\omega_{\theta}=dt\wedge a_{\theta}+(dz-xdy)\wedge b_{\theta}\]
where $a_{\theta}=\cos\theta\ dx+\sin\theta\ dy$ and $b_{\theta}=-\sin\theta\ dx+\cos\theta\ dy$. The second integral homology $H_2(K;\ZZ)$ is of rank 4 and is generated by the homology classes represented by the tori
\[E_{ij}=[\{(x_1,x_2,x_3,x_4)\in K\ |\ x_i=x_j=0\}]\]
where $i\in\{1,2\}$ and $j\in\{3,4\}$. We also define $E_{i34}\in H_3(K;\ZZ),\ i\in\{1,2\}$ by
\[E_{i34}=[\{(x_1,x_2,x_3,x_4)\in K\ |\ x_i=0\}].\]
See Section \ref{coh} for a complete description of the homology and cohomology. We will write
\[[A_{13},A_{23},A_{14},A_{24}]\]
for the homology class $A=\sum A_{ij}E_{ij}$. By Lemma \ref{pluck} we know that if $A\in H_2(K;\ZZ)$ is a homology class represented by a torus then $A_{13}A_{24}=A_{14}A_{23}$. It is helpful to remember that $K$ is a $T^2$-bundle over $T^2$ in two ways:
\begin{itemize}
\item There is a projection $(x,y,z,t)\mapsto (x,y)$ whose fibres are Lagrangian with respect to $\omega_{\theta}$ for all $\theta$.
\item There is a projection $(x,y,z,t)\mapsto (x,t)$ whose fibres are symplectic for $\omega_0$.
\end{itemize}

\begin{thm}\label{ourmainthm}
Let $K$ denote the Kodaira-Thurston manifold and $\WW$ the twistor family of left-invariant symplectic structures on $K$. If $A=[A_{13},A_{23},A_{14},A_{24}]\in H_2(K;\ZZ)$ is a non-zero homology class and $m=\gcd(A_{13},A_{23})$, $n=\gcd(A_{14},A_{24})$ then
\[\GW_{1,1}(\WW,A)=\frac{(m^2+n^2)\sigma_2(\gcd(m,n))}{\gcd(m,n)^3}(A_{13}E_{134}+A_{23}E_{234})\in H_3(K;\ZZ)\]
where $\sigma_2(x)=\sum_{d|x}d^2$.
\end{thm}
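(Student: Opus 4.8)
The plan is to use the homogeneity of everything in sight. First I would replace the twistor family $\WW$ by the circle of left-invariant almost complex structures $J_\theta$ compatible with $\omega_\theta$ and the fixed metric $dt^2+dx^2+dy^2+(dz-x\,dy)^2$: this is the $J_\theta$ whose $(1,0)$-forms are spanned by $\zeta_1=dt+ia_\theta$ and $\zeta_2=(dz-x\,dy)+ib_\theta$, and one checks $\tfrac i2(\zeta_1\wedge\overline{\zeta_1}+\zeta_2\wedge\overline{\zeta_2})=\omega_\theta$. Lifting a $J_\theta$-holomorphic torus $\Sigma=\CC/\Lambda\to K$ to $u=(X,Y,Z,T)\colon\CC\to\RR^4$, the equation becomes $u^*\zeta_1,u^*\zeta_2\in\Omega^{1,0}(\Sigma)$. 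Since $\zeta_1$ is closed, $u^*\zeta_1$ is a closed $(1,0)$-form, hence a holomorphic differential $c_1\,dw$, so $T$ and $\cos\theta\,X+\sin\theta\,Y$ are the real and imaginary parts of an affine function of $w$. Writing $u^*\zeta_2=f_2\,dw$ and using $d\zeta_2=-a_\theta\wedge b_\theta$, the remaining $(1,0)$-condition becomes the linear elliptic equation $\partial_{\bar w}f_2=-\tfrac i2\,\mathrm{Im}(\overline{c_1}f_2)$ on $\Sigma$, and an integration-by-parts argument forces $f_2$ to be a real constant multiple of $c_1$. Reconstructing $Z$ and imposing periodicity through the twisted lattice $\Gamma$ then shows that every $J_\theta$-holomorphic torus is a translate of a flat, possibly multiply covered, subtorus; the constraint on the winding is exactly the Plücker relation of Lemma~\ref{pluck}, so $(A_{13},A_{23})=m v$ and $(A_{14},A_{24})=n v$ for a primitive $v\in\ZZ^2$, and the only remaining discrete datum is a cover degree $d\mid g:=\gcd(m,n)$, with the domain modulus then determined by $\theta$.

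Assembling these over $\theta\in S^1$ produces the moduli space $\mM^\WW_{1,1}(A)$ of pairs $(\theta,[u,z])$; by Section~\ref{famGW} its virtual dimension is $2c_1(A)+2+\dim\WW=3$, using $c_1(K)=0$. The main obstacle is to make the family invariant well-defined and effective here. For fixed $\theta$ the space $\mM_\theta(A)$ is nonempty only on the arc where $\int_A\omega_\theta>0$, and as $\theta$ approaches the ends of this arc the area degenerates and the domain modulus runs to the boundary of $\overline{\mM}_{1,1}$; I would argue this contributes no codimension-one boundary to the virtual class, the key point being that $\pi_2(K)=0$, so that in class $A$ there is no stable bubbled or nodal configuration compatible with the single marked point. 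One must then identify the obstruction bundle $\Obs\to\mM^\WW_{1,1}(A)$ coming from $\coker D\dbar$ (the actual moduli space being larger than the virtual dimension because $J_\theta$ is far from generic), so that $\GW_{1,1}(\WW,A)=\ev_*\!\bigl(e(\Obs)\cap[\mM^\WW_{1,1}(A)]\bigr)$, and check that the finitely many cover components are born and die consistently as $\theta$ sweeps around the circle.

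Finally the evaluation at the marked point: on each component $\ev$ is essentially a translation (composed with a covering) of $K$, so the $H_3$-class is produced by integrating $e(\Obs)$ over the $\theta$-direction, and it lands in the span of $E_{134}$ and $E_{234}$ — the Poincaré duals of $dx$ and $dy$, the directions in which the flat tori genuinely spread — with vector coefficient $A_{13}E_{134}+A_{23}E_{234}=m(v_1E_{134}+v_2E_{234})$ read off from the winding numbers. The scalar $(m^2+n^2)\sigma_2(g)/g^3$ then comes from two bookkeeping inputs: the windings $m$ in the $z$-direction and $n$ in the $t$-direction enter through the energy $\int_A\omega_\theta$ and the normalisation of $c_1\,dw$, contributing $m^2+n^2$; and summing the degree-$d$ cover components over $d\mid g$, each weighted by $d^2$ (an Euler number of $\Obs$, equivalently a covering degree of $\ev$ on that component) against the $d$-fold reparametrisation data, gives $\sum_{d\mid g}d^2/g^3=\sigma_2(g)/g^3$. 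I expect the genuinely hard parts to be the obstruction-bundle identification and the verification that the resulting signed, multiplicity-weighted count across the whole circle of structures is precisely this arithmetic function rather than merely proportional to it.
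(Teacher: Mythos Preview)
Your classification step---pulling back the $(1,0)$-forms $\zeta_1,\zeta_2$ and reducing to a linear elliptic equation for $f_2$---is a reasonable alternative to the paper's Baker--Campbell--Hausdorff / maximum-principle argument (Proposition~\ref{classiftori}), and should yield the same conclusion that every $J_\theta$-holomorphic torus is a right-translate of a Lie group homomorphism. But from that point on your picture of the moduli space is wrong in a way that undermines the rest of the plan.

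You assert that $\mM_\theta(A)$ is nonempty over an \emph{arc} of $\theta$-values, with the domain modulus running to the boundary of $\overline{\mM}_{1,1}$ at the ends, and that the actual moduli space therefore has excess dimension requiring an Euler-class computation. In fact, once one has the classification, Lemma~\ref{enum} shows that for each reduced homomorphism $h$ with $[h]=A$ the rotation $\psi$ is \emph{uniquely} determined by the directions of $\partial_p h_{\zz}$ and $\partial_q h_{\aa}$, and these depend only on the ratio $m:n$. Hence when $m\neq 0$ the entire moduli space $\mM_{1,1}(\WW,A)$ lives over a \emph{single} point $\theta\in\WW$; there is no arc and no degeneration. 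Moreover the moduli space there is three-dimensional and \emph{regular} (Corollary~\ref{mod-tang}, Lemma~\ref{kernli}): the one extra dimension beyond the ordinary (non-family) virtual dimension is a translation direction $D(\lambda)$ in $K$, not a $\theta$-direction. The obstruction bundle is needed only when $m=0$, where the moduli is four-dimensional, and there a nowhere-vanishing section (Lemma~\ref{obscalc}) kills the contribution. So your proposed mechanism---integrating $e(\Obs)$ over the $\theta$-direction---does not exist.

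Consequently the arithmetic also does not arise the way you sketch. The factor $(m^2+n^2)/\gcd(m,n)$ is the degree of the evaluation map on a single component (coming from $|\partial_p h_{\zz}|^2/\gcd(\partial_p h_3,\partial_p h_4)$, not from an energy integral), and the remaining factor is obtained by \emph{enumerating} the $|m|\sigma_0(\gcd(m,n))$ components (indexed by $d\mid\gcd(m,n)$, $\ell\in\{1,\dots,|m|/d\}$, $k\in\{1,\dots,d\}$), dividing each by the size of its automorphism group (Lemma~\ref{autosize}), and performing the sum
\[
\sum_{d\mid\gcd(m,n)}\sum_{k=1}^{d}\sum_{\ell=1}^{|m|/d}\frac{1}{\gcd(\gcd(m,n),(mk+n\ell)/d)}=\frac{|m|}{\gcd(m,n)^2}\sigma_2(\gcd(m,n))
\]
via a Ces\`aro identity (Lemma~\ref{ickymess}). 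Your ``each component weighted by $d^2$'' does not match this; the $d^2$ only emerges after the automorphism weighting and the double sum over $k,\ell$ collapse.
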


Note that, according to our definition (Equation \eqref{dfn:gw}), family Gromov-Witten invariants should really live in $H_*(K\times W;\ZZ)$. In our situation they all have the form $C\otimes[\star]$ where $[\star]\in H_0(W,\ZZ)$ is the homology class of a point. This is because each connected component of the moduli space of tori which are $J_{\theta}$-holomorphic for some $\theta\in\WW$ consists of tori which are $J_{\theta}$-holomorphic for a fixed $\theta$.

We would like to stress that the moduli spaces of pseudoholomorphic tori we consider are, unusually, odd-dimensional. This can be understood as follows. The index of the Fredholm problem for counting (unmarked) tori in a four-manifold with $c_1=0$ is zero. In each space of $\omega_{\theta}$-compatible almost complex structures there is a codimension one `wall' of almost complex structures where the kernel of the Fredholm problem is one-dimensional and there is a one-dimensional cokernel. For all the moduli spaces which contribute to the Gromov-Witten invariant, our one-dimensional family $J_{\theta}$ is transverse to that wall, and so the (one-dimensional) moduli space is regular from the point of view of family Gromov-Witten theory.

Let us spell out the geometric content of this theorem for the specific family~$J_{\theta}$. Since the Gromov-Witten class is in $H_3(K;\ZZ)$ it detects holomorphic curves intersecting a loop in $K$. Let $L$ be a loop in $K$ and let $A\in H_2(K;\ZZ)$ be as in the statement of the theorem. There is a unique almost complex structure $J_{\theta} \in \WW$ (see Lemma 5.7) for which there are $J_{\theta}$-holomorphic tori representing $A$. For this $J_{\theta}$ the pseudoholomorphic tori intersect $L$ at
\[\frac{m^2+n^2}{\gcd(m,n)^3}\sigma_2(\gcd(m,n))(A_{13}E_{134}+A_{23}E_{234}) \cap [L]\]
points (counted with multiplicity and signs). The complex structure on the domain torus is allowed to vary but, when $m\neq 0$ it is actually constant over each component of the moduli space (Lemma 5.7 again).

Two obvious classes containing holomorphic tori are $E_{13}$ and $E_{14}$.
\begin{exm}
The class $E_{14}$ is represented by the fibres of the projection
\[(x,y,z,t)\mapsto (x,t).\]
These are $J_0$-holomorphic tori and all $J_0$-holomorphic tori have this form. In this case $m=0$ and the $j$-invariant of these fibres varies in a loop. In fact, these tori are all irregular and do not contribute to the Gromov-Witten invariant, see Lemma~\ref{obscalc}.
\end{exm}
\begin{exm}
The class $E_{13}$ is represented by sections of the projection
\[(x,y,z,t)\mapsto (x,t).\]
There is an $S^1$-family of $J_{\pi/2}$-holomorphic sections which intersect a loop $L$ in $[L]\cap [E_{134}]$ points (with multiplicity). In fact, we will not deal with these sections explicitly because they do not correspond to \emph{reduced Lie algebra homomorphisms} (see Definition \ref{reddfn}). Instead we will apply a diffeomorphism of $K$ to move to the homology class $E_{13}+E_{23}$ without affecting the Gromov-Witten computation, using Lemma \ref{wlog} and Equation \eqref{gwinvariance}. The $J_{\pi/2}$-holomorphic tori representing $E_{13}$ become $J_{\pi/4}$-holomorphic tori representing $E_{13}+E_{23}$.
\end{exm}

\subsection*{Outline of proof of Theorem \ref{ourmainthm}}

Theorem \ref{ourmainthm} is proved by reducing the problem to the enumeration of certain homomorphisms $\ZZ^2\to\pi_1(K)$. Let $u\colon T^2\to K$ be a $J$-holomorphic torus where $J$ is a left-invariant almost complex compatible with $\omega$. We perform the following steps:
\begin{itemize}
\item Take a lift of $u$ to the universal covers $\tilde{u}\colon\RR^2\to N$ and compare it with the unique Lie group homomorphism $H\colon\RR^2\to N$ extending the induced map $\pi_1(u)\colon\ZZ^2\to\pi_1(K)$ on fundamental groups.
\item The map $(p,q)\mapsto H(p,q)^{-1}\tilde{u}(p,q)$ is then bounded (Corollary \ref{bdd}) and we are interested in its logarithm $C\colon\RR^2\to\nn$ where $\nn$ is the Lie algebra of $N$.
\item The Cauchy-Riemann equations for $C$ imply that $C$ satisfies a second order elliptic system of equations. In Proposition \ref{classiftori} we show that this system separates into equations for which the Hopf maximum principle holds \cite[Theorem 3.1]{GT}. We apply this to prove that $C$ is constant. Hence, for all left-invariant $J$ which are compatible with a left-invariant symplectic form, all $J$-holomorphic tori are of the form $ve^C$, where $v$ comes from a Lie algebra homomorphism and $C$ is a constant.
\item Another maximum principle allows us to study the linearised problem. We prove that all moduli spaces are cut out cleanly by the Cauchy-Riemann operator (Theorem \ref{cleanthm}). In Section \ref{regobs} we determine which tori are regular and, for non-regular tori, we write down an explicit section of the obstruction bundle, proving that these do not contribute to the Gromov-Witten invariant. In Section \ref{orient}, we determine orientations on the moduli spaces.
\item It remains to count the tori. By applying an automorphism of $\Gamma$ we reduce ourselves to considering only non-zero homology classes
\[[A_{13},A_{23},A_{14},A_{24}]\]
where $A_{13}=A_{23}$ and $A_{14}=A_{24}$ (Lemma \ref{wlog}), for which we can further assume that the homomorphism $H$ has a particularly simple form (Lemma \ref{reparawlog}). This enables us to enumerate the tori and to understand the homology classes represented by the evaluation maps (Section \ref{enumtori}).
\end{itemize}

\subsection*{Generalisations}

We carry out the full calculation only for the Kodaira-Thurston manifold $K$ but we formulate the problem for two-step symplectic nilmanifolds in general. The universal cover of $K$ is the only nonabelian symplectic two-step nilpotent Lie group in dimension four. For examples in higher dimensions the main difference is that the Cauchy-Riemann equations are a more complicated elliptic system (with more serious nonlinearities and coupling) and it becomes harder to apply the maximum principle. Our methods can be extended to a limited range of homology classes in certain higher-dimensional examples - the ones constructed in \cite{CFL}. For $k$-step nilmanifolds with $k\geq 3$ the equations are even harder to deal with.

\subsection*{Outline of the paper}
\begin{itemize}
\item In Section \ref{back} we explain the classical computations of genus one Gromov-Witten invariants for two-tori and for higher-dimensional tori. We also define the family Gromov-Witten invariants.
\item In Section \ref{nilp} we introduce two-step nilpotent Lie groups and their twistor families $\WW$ of symplectic structures. We also write down the Cauchy-Riemann equations for the logarithm of a $J$-holomorphic torus ($J$ compatible with some $\omega\in\WW$).
\item In Section \ref{KT} we review the Kodaira-Thurston manifold and its basic properties. In particular we show that all tori are descended from right-translates of Lie algebra homomorphisms (Proposition \ref{classiftori}).
\item In Section \ref{holtor} we describe the moduli spaces of holomorphic tori.
\item In Section \ref{auto} we compute the automorphism groups of the unmarked holomorphic tori in the Kodaira-Thurston manifold.
\item In Section \ref{lintheory} we study the linearised problem, including checking regularity and computing obstruction bundles and orientations.
\item In Section \ref{enumtori} we complete the proof of Theorem \ref{ourmainthm}.
\end{itemize}


\subsection*{Notation}

For brevity in our coordinate expressions we will sometimes use the following convention to denote antisymmetrisation of indices:
\[A_{[ij]}=A_{ij}-A_{ji}\]
for example:
\[\partial_{[p}X_i\partial_{q]}Y_j=\partial_pX_i\partial_qX_j-\partial_qX_i\partial_pX_j.\]


\section{Background}\label{back}

We begin by giving an overview of genus one Gromov-Witten theory for the (twistor families of) complex tori, which are precisely the nilmanifolds arising from an abelian Lie group. In doing so we build up in embryonic form many of the ideas we need for the nonabelian case.


\subsection{The $2$-torus}\label{2torus}

The space whose genus one Gromov-Witten invariants are easiest to calculate is the two-torus, $T^2$. Let $\Lambda_{\tau}\cong\ZZ^2$ denote the $\ZZ$-lattice in $\CC$ spanned by the vectors $1$ and $\tau=\tau_1+i\tau_2$, $\tau_2>0$. Let $\Sigma_{\tau}=\Lambda_{\tau}\backslash\CC$ denote the corresponding complex torus.
\begin{lma}
Any non-constant holomorphic map $f\colon \Sigma_{\tau'}\to \Sigma_{\tau}$ is a covering map (unbranched).
\end{lma}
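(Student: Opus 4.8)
The plan is to reduce everything to the universal covers. Since $\CC$ is simply connected and the projections $\pi_{\tau'}\colon\CC\to\Sigma_{\tau'}$ and $\pi_\tau\colon\CC\to\Sigma_\tau$ are covering maps, $f$ lifts to a holomorphic map $\tilde f\colon\CC\to\CC$ with $\pi_\tau\circ\tilde f=f\circ\pi_{\tau'}$. For each $\lambda\in\Lambda_{\tau'}$ and each $z\in\CC$ the difference $\tilde f(z+\lambda)-\tilde f(z)$ lies in $\Lambda_\tau$; being continuous in $z$ with values in a discrete set it is a constant $\mu(\lambda)\in\Lambda_\tau$. Differentiating in $z$ gives $\tilde f'(z+\lambda)=\tilde f'(z)$ for all $\lambda\in\Lambda_{\tau'}$, so $\tilde f'$ descends to a holomorphic function on the compact Riemann surface $\Sigma_{\tau'}$ and is therefore constant, say $\tilde f'\equiv a$. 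Hence $\tilde f(z)=az+b$ for some $a,b\in\CC$.

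If $a=0$ then $f$ is constant; this degenerate case is excluded once one reads "holomorphic map" as "non-constant holomorphic map" (as is implicit in the sequel, where these maps are pseudoholomorphic curves). So assume $a\neq 0$. Then $z\mapsto az+b$ is a biholomorphism of $\CC$, hence everywhere a local biholomorphism, so in particular $f$ is unbranched; compatibility with the quotients forces $a\Lambda_{\tau'}\subseteq\Lambda_\tau$, and $f$ is a holomorphic local homeomorphism $\Sigma_{\tau'}\to\Sigma_\tau$. Finally, $\Sigma_{\tau'}$ is compact, so $f$ is proper, and a proper local homeomorphism between connected manifolds is a covering map (the fibres are finite, and around each point of $\Sigma_\tau$ one builds an evenly covered neighbourhood from the finitely many local inverses). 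Its degree is $[\Lambda_\tau:a\Lambda_{\tau'}]$.

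There is no serious obstacle here: the argument is the standard rigidity of holomorphic maps between complex tori. The only points that deserve a sentence of care are the existence of the lift $\tilde f$ (simple connectivity of $\CC$), the step from "$\tilde f'$ is $\Lambda_{\tau'}$-periodic" to "$\tilde f'$ is constant" (Liouville, via compactness of $\Sigma_{\tau'}$), and the upgrade from "proper local homeomorphism" to "covering map". This lemma is the genus-one prototype for the much harder nonabelian statement (Proposition \ref{classiftori}), where the same philosophy — lift to the universal cover, compare with a group homomorphism, and use an elliptic maximum principle in place of Liouville's theorem — is carried out for the Kodaira-Thurston manifold.
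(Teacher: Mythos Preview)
Your proof is correct but takes a different route from the paper's. The paper dispatches the lemma in a single sentence via the Riemann--Hurwitz formula: a non-constant holomorphic map from a genus-one curve to a genus-one curve can have no branch points, since branching would force the domain to have genus strictly greater than one. Your argument instead lifts to the universal cover $\CC$, uses Liouville's theorem (via periodicity of $\tilde f'$) to show the lift is affine, and then upgrades a proper local homeomorphism to a covering map. Your approach is more constructive and has the payoff of producing the explicit form $\tilde f(z)=az+b$ together with the inclusion $a\Lambda_{\tau'}\subseteq\Lambda_\tau$; this is precisely the sublattice description the paper invokes immediately after the lemma when it passes to counting index-$\ell$ sublattices, so your method folds that step in for free. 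The Riemann--Hurwitz argument is shorter but leaves the affine structure to be recovered separately. Your closing observation is on point: this really is the abelian prototype of the later classification of holomorphic tori in the Kodaira--Thurston manifold, where the same lift-and-compare strategy is used with the maximum principle standing in for Liouville.
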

This is clear because branching increases genus by the Riemann-Hurwitz formula. Therefore counting holomorphic maps $\Sigma_{\tau'}\to \Sigma_{\tau}$ of degree $\ell\geq 1$ amounts to counting holomorphic covering spaces of $\Sigma_{\tau}$ or, equivalently, sublattices $\Lambda_{\tau'}\subset\Lambda_{\tau}$ of index $\ell$ modulo the action of $\OP{SL}(2,\ZZ)$ which re\-pa\-ra\-metri\-ses the domain.
\begin{lma}
There are $\sigma_1(\ell)=\sum_{d|\ell}d$ sublattices of $\Lambda_{\tau}$ of index $\ell\geq 1$, modulo the action of $\OP{SL}(2,\ZZ)$.
\end{lma}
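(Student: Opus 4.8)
The plan is to reduce this to the classical enumeration of finite-index subgroups of $\ZZ^2$ by Hermite normal form. By the previous lemma a degree-$\ell$ holomorphic map $\Sigma_{\tau'}\to\Sigma_\tau$ is an unbranched cover, and lifting to universal covers it is — up to the translation in the target, which is marked-point data and is suppressed here — a map $z\mapsto\alpha z$ with $\alpha\Lambda_{\tau'}$ a sublattice of $\Lambda_\tau$ of index $\ell$. First I would observe that, after choosing the oriented basis $\{1,\tau'\}$ of $\Lambda_{\tau'}$, such a cover is recorded by the matrix $M\in M_2(\ZZ)$ expressing $\{\alpha,\alpha\tau'\}$ in the basis $\{1,\tau\}$ of $\Lambda_\tau$, with $\det M=\ell$ (positive, since multiplication by $\alpha$ is orientation preserving and both $\tau,\tau'$ lie in the upper half-plane); changing the basis of the domain — the $\OP{SL}(2,\ZZ)$ reparametrisation — replaces $M$ by $Mg$ with $g\in\OP{SL}(2,\ZZ)$; and two such matrices lie in the same orbit iff they have the same column span, so $\{M\in M_2(\ZZ):\det M=\ell\}/\OP{SL}(2,\ZZ)$ is precisely the set of index-$\ell$ sublattices of $\Lambda_\tau$.

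Next I would invoke (or quickly reprove by the division algorithm) the Hermite normal form: every index-$\ell$ sublattice of $\Lambda_\tau$ has a unique $\ZZ$-basis of the shape $\{a+b\tau,\ d\tau\}$ with $a,d$ positive integers satisfying $ad=\ell$ and $0\le b<d$. Running over the positive divisors $d\mid\ell$ (each pinning down $a=\ell/d$) and the $d$ admissible values of $b$ then gives
\[\#\{\text{index-}\ell\text{ sublattices of }\Lambda_\tau\}=\sum_{d\mid\ell}d=\sigma_1(\ell),\]
which is the assertion.

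There is no serious obstacle in the argument; the only point needing care is the bookkeeping of the reparametrisation group. One must check that it is $\OP{SL}(2,\ZZ)$ rather than $\OP{GL}(2,\ZZ)$ that acts — this is forced by the normalisation $\tau'$ in the upper half-plane on the domain modulus — since quotienting by all of $\OP{GL}(2,\ZZ)$ would identify orientation-reversed pairs and halve the count, whereas a smaller group would overcount. I would also flag, for later use in the paper, that the translations in the target torus have been set aside here: they are exactly the data the evaluation map will record, and play no role in this count.
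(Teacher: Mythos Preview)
Your argument is correct and is essentially the paper's own proof: both pass to integer $2\times 2$ matrices of determinant $\ell$ modulo right $\OP{SL}(2,\ZZ)$-multiplication and reduce via the Euclidean algorithm to a triangular (Hermite) normal form, then sum $d$ over divisors $d\mid\ell$ (the paper uses upper-triangular representatives, you use lower-triangular, which is immaterial). One small quibble on your closing remark: quotienting the determinant-$\ell$ matrices by $\OP{GL}(2,\ZZ)$ would not actually halve the sublattice count---the extra orientation-reversing element swaps the basis orientation but gives the same lattice---though this aside plays no role in the proof itself.
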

\begin{proof}
This is standard and we reproduce the argument only for comparison later. A sublattice of index $\ell$ is specified by a homomorphism $\ZZ^2\to\ZZ^2$ whose image has index $\ell$, that is a two-by-two integer matrix
\[\left(\begin{array}{cc}
a & b\\
c & d
\end{array}\right)\]
with determinant $\ell$. The $\OP{SL}(2,\ZZ)$-action is just right multiplication. Using this right action one can perform the Euclidean algorithm on $c$ and $d$ to ensure that $c$ vanishes. Similarly one can ensure that $0\leq b< a$. Now for each $d|\ell$ there are $d$ possible matrices up to the action of $\OP{SL}(2,\ZZ)$
\[\left(\begin{array}{cc}
d & b\\
0 & \ell/d
\end{array}\right),\ b=0,\ldots,d-1.\]
\end{proof}
We define the moduli space
\[\mM_{1,1}(\Sigma_{\tau},\ell[\Sigma_{\tau}])\]
to consist of equivalence classes of pairs $(u,z)$ where $u\colon\Sigma_{\tau'}\to\Sigma_{\tau}$ is a holomorphic map of degree $\ell\geq 1$ and $z\in\Sigma_{\tau'}$ is a point. The equivalence relation equates $(u,z)$ with $(u\circ\phi^{-1},\phi(z))$ for any holomorphic automorphism $\phi\colon\Sigma_{\tau'}\to\Sigma_{\tau'}$ for which $u\circ\phi^{-1}=u$. This has an evaluation map
\[\ev\colon\mM_{1,1}(\Sigma_{\tau},\ell[\Sigma_{\tau}])\to \Sigma_{\tau},\ [u,z]\mapsto u(z)\]
\begin{lma}
The map $\ev$ has degree $\sigma_1(\ell)$.
\end{lma}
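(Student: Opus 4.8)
The plan is to decompose $\mM_{1,1}(\Sigma_{\tau},\ell[\Sigma_{\tau}])$ according to the isomorphism class of the underlying unmarked map and to show that $\ev$ has degree $1$ on each piece. Forgetting the marked point sends $\mM_{1,1}$ to the set of isomorphism classes of unmarked degree-$\ell$ holomorphic maps into $\Sigma_{\tau}$; by the two preceding lemmas this set is in bijection with the index-$\ell$ sublattices $\Lambda'\subset\Lambda_{\tau}$ modulo $\OP{SL}(2,\ZZ)$, of which there are $\sigma_1(\ell)$. So it suffices to analyse the component of $\mM_{1,1}$ lying over one such class $[\Lambda']$.

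Fix an index-$\ell$ sublattice $\Lambda'\subset\Lambda_{\tau}$ and let $q_{\Lambda'}\colon\CC/\Lambda'\to\CC/\Lambda_{\tau}=\Sigma_{\tau}$ be the quotient map, a degree-$\ell$ unbranched cover. Any holomorphic map $\Sigma_{\tau'}\to\Sigma_{\tau}$ lifts to an affine map $w\mapsto\alpha w+\beta$ of $\CC$ with $\alpha\Lambda_{\tau'}\subset\Lambda_{\tau}$, so after the domain isomorphism $w\mapsto\alpha w$ it becomes $t_{\beta}\circ q_{\Lambda'}$, where $\Lambda'=\alpha\Lambda_{\tau'}$ and $t_{\beta}$ is a translation of the target; choosing a $q_{\Lambda'}$-preimage $b$ of $\beta$ and precomposing with the translation $t_{-b}$ of $\CC/\Lambda'$ shows $t_{\beta}\circ q_{\Lambda'}$ is isomorphic, as an unmarked map, to $q_{\Lambda'}$ itself. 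Thus every point of this component has a representative $(q_{\Lambda'},z)$, $z\in\CC/\Lambda'$. A short computation with affine automorphisms $v\mapsto\zeta v+b$ of $\CC/\Lambda'$ shows that $q_{\Lambda'}\circ\phi^{-1}=q_{\Lambda'}$ forces $\zeta=1$ and $b\in\Lambda_{\tau}/\Lambda'$; that is, the automorphisms of the domain preserving $q_{\Lambda'}$ are exactly the translations by the deck group $\Lambda_{\tau}/\Lambda'$, of order $\ell$.

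Therefore the component of $\mM_{1,1}$ over $[\Lambda']$ is identified with $(\CC/\Lambda')/(\Lambda_{\tau}/\Lambda')=\CC/\Lambda_{\tau}=\Sigma_{\tau}$, the class of $z$ recording the marked point modulo the deck action, and under this identification $\ev([q_{\Lambda'},z])=q_{\Lambda'}(z)$ is simply the identity map $\Sigma_{\tau}\to\Sigma_{\tau}$. Hence $\ev$ has degree $1$ on each of the $\sigma_1(\ell)$ components, so $\deg\ev=\sigma_1(\ell)$.

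The one delicate point — and essentially the whole content of the lemma — is the bookkeeping of the equivalence relation: one quotients $\mM_{1,1}$ only by automorphisms of the domain that fix $u$, and this stabiliser is precisely the deck group of the $\ell$-sheeted cover. It is exactly this factor of $\ell$ that ``unwinds'' the $\ell$-to-$1$ evaluation into a degree-$1$ map, so that the naive answer $\ell\cdot\sigma_1(\ell)$ collapses to $\sigma_1(\ell)$. One should also note that when $\Lambda'$ or $\Lambda_{\tau}$ is the square or hexagonal lattice the domain acquires extra automorphisms, but none of these fixes $q_{\Lambda'}$, so each component is a genuine torus and there is no orbifold correction to the degree.
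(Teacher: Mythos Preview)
Your argument is correct and is essentially the same as the paper's: both decompose the moduli space into $\sigma_1(\ell)$ pieces indexed by covering maps and observe that the deck group identifies all preimages of a given point, so $\ev$ has degree $1$ on each piece. You have simply made explicit the identification of each component with $\Sigma_{\tau}$ and added the checks that $\zeta=1$ and that special lattices introduce no further automorphisms, details the paper leaves implicit.
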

\begin{proof}
We have seen that there are $\sigma_1(\ell)$ tori in the moduli space and that all of these are $\ell$-fold covering spaces of $\Sigma_{\tau}$. Fix one such covering map. If $x\in\Sigma_{\tau}$ then the preimages of this point under this covering map are all equivalent by the action of the deck group, which acts by holomorphic automorphisms preserving the covering. Hence they represent the same element in the moduli space $\mM_{1,1}(\Sigma_{\tau},\ell[\Sigma_{\tau}])$, so the degree of the evaluation map is just the number of covering spaces, $\sigma_1(\ell)$.
\end{proof}
These curves are all regular in the sense of Gromov-Witten theory: the cokernel of the linearisation is just the quotient of the Dolbeault cohomology group $H^{0,1}(\Sigma_{\tau'};u^*T\Sigma_{\tau})$ by the image of $H^{0,1}(\Sigma_{\tau'};T\Sigma_{\tau'})$ under pushforward $du\colon T\Sigma_{\tau'}\to T\Sigma_{\tau}$ (this quotient corresponds to allowing $\tau'$ to vary)
\[H^{0,1}(\Sigma_{\tau'};u^*T\Sigma_{\tau})/H^{0,1}(\Sigma_{\tau'};T\Sigma_{\tau'})\cong H^{0,1}(\Sigma_{\tau'};T\Sigma_{\tau'})/H^{0,1}(\Sigma_{\tau'};T\Sigma_{\tau'})=0\]
The 1-point Gromov-Witten class of degree $\ell\geq 1$, genus one curves through a point of $T^2$ is the pushforward under $\ev$ of the fundamental class of the moduli space and is therefore given by
\[\GW_{1,1}(T^2,\ell[T^2])=\sigma_1(\ell)[T^2].\]


\subsection{The $2n$-torus}

The situation for the $2n$-torus is similar but some of the features it presents are new and will be developed in a more general context later in the paper. For a start, a generic abelian variety contains no closed holomorphic curves, so we know that the Gromov-Witten invariants vanish. However, holomorphic curves persist if we take a family of complex structures and look for curves which are holomorphic with respect to one of the complex structures. This phenomenon, made precise in Section \ref{famGW}, is familiar from the case of K3 surfaces \cite{BL}, where an elliptically-fibred K3 contains elliptic curves through every point which disappear if one perturbs the complex structure, but which persist in families which are deformations of the hyper-K\"{a}hler two-sphere of complex structures. The Gromov-Witten invariants which count curves representing some second homology class $A\neq 0$ which are $J$-holomorphic for some $J$ in a fixed finite-dimensional, compact, oriented family are called \emph{family Gromov-Witten invariants}. Note that these $J$ must all be tamed by symplectic forms in order to achieve Gromov compactness, but that the taming form (and even its cohomology class) might depend on $J$.

A natural generalisation of this hyper-K\"{a}hler sphere to examples which are not hyper-K\"{a}hler is the following.

\begin{dfn}[Twistor family]\label{torustwist}
Let $g$ be an inner product on the vector space $\RR^{2n}$ and let $\orient$ be an orientation. The \emph{twistor family} of complex structures is the space of $\orient$-positive orthogonal complex structures
\[\WW=\{\psi\in GL^+(\RR^{2n})\ |\ \psi^2=-\id,\ g(\psi X,\psi Y)=g(X,Y)\mbox{ for all }X,Y\in\RR^{2n}\}\]
Note that $\WW\cong SO(2n)/U(n)$ since $SO(2n)$ acts transitively on $\WW$ with stabiliser $U(n)$. Each $\psi\in\WW$ gives rise to a two-form
\[\omega_{\psi}(X,Y):=-g(X,\psi Y)\]
and to a bi-invariant K\"{a}hler structure $(\Omega_{\psi},J_{\psi})$ on the torus $\ZZ^{2n}\backslash\RR^{2n}$.
\end{dfn}

The genus one family Gromov-Witten invariants of the twistor family of $2n$-tori are easy to compute. Let us write $\GW_{1,k}(\WW,A)\in H_*((T^{2n})^k\times\WW;\ZZ)$ for the homology class of the evaluation pseudocycle for genus one curves representing the homology class $A\neq 0$ which are $J_{\psi}$-holomorphic for some $\psi\in~\!\WW$ (see Section \ref{famGW} for definitions).

Note first that if $\phi\in\OP{SL}(2n,\ZZ)$ is a matrix then $\phi^*\WW$ is the twistor family of $\phi^*g$. Since $g$ and $\phi^*g$ can be connected by a path of inner products we see that $\phi^*\WW$ and $\WW$ are isotopic as families of complex structures. More importantly, the families $\{\omega_{\psi}\}_{\psi\in\WW}$ and $\{\omega_{\phi^*\psi}\}_{\psi\in\WW}$ of taming symplectic forms are isotopic. The family Gromov-Witten invariants are equivariant under diffeomorphisms $\phi$, so that
\[\phi_*\GW_{1,k}(\WW,A)=\GW((\phi^{-1})^*(\WW),\phi_*A),\]
and also unchanged by deformations through tamed families, hence we see that
\[\GW_{1,k}(\WW,\phi_*A)=\phi_*\GW_{1,k}(\WW,A).\]
Homology classes represented by two-tori are specified by homomorphisms $\rho\colon\ZZ^2\to\ZZ^{2n}$ on the level of fundamental groups; we write $[\rho]$ for the corresponding homology class. Two such homomorphisms $\rho$ and $\rho'$ define the same homology class if and only if $\Lambda^2\rho=\Lambda^2\rho'$, that is if all two-by-two minors of $\rho$ and $\rho'$ agree. Acting on the left by an element of $\OP{SL}(2n,\ZZ)$ one can assume that
\[\rho=\left(\begin{array}{cc}
\rho_{11} & \rho_{12}\\
\rho_{21} & \rho_{22}\\
0 & 0\\
\vdots & \vdots\\
0 & 0
\end{array}\right)\]
The counting of such homomorphisms up to the reparametrisation action of $\OP{SL}(2,\ZZ)$ is again performed by the function $\sigma_1(\ell)$ where $\ell$ is the only nonvanishing two-by-two minor, so $\ell=\ell([\rho])$ is the divisibility of the homology class (the only invariant of the $\OP{SL}(2n,\ZZ)$-action).

Each homomorphism $\rho\colon\ZZ^2\to\ZZ^{2n}$ actually defines a 2-plane $\Pi(\rho)\subset\RR^{2n}$ which is $J_{\psi}$-holomorphic for $\psi$ in a subvariety $\WW(\rho)\subset\WW$. This subvariety is diffeomorphic to $SO(2n-2)/U(n-1)$. Each such 2-plane descends to a $J_{\psi}$-holomorphic genus one curve $v\colon T^2\to T^{2n}$ in $T^{2n}$.
\begin{lma}
All $J_{\psi}$-holomorphic curves in the homology class $[\rho]$ are affine translates of $v$.
\end{lma}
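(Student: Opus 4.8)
The plan is to pass to universal covers and run a Liouville-type argument, just as in the $2$-torus case of Section \ref{2torus}. Fix $\psi\in\WW(\rho)$, so that the real $2$-plane $\Pi(\rho)\subset\RR^{2n}$ is $J_\psi$-invariant and hence a complex line in $(\RR^{2n},J_\psi)$; identify $(\RR^{2n},J_\psi)$ with $\CC^n$ as a complex vector space, so that $(T^{2n},J_\psi)=\Lambda\backslash\CC^n$ for a lattice $\Lambda\cong\ZZ^{2n}$. Let $w$ be any $J_\psi$-holomorphic curve in the class $[\rho]$. Since $T^{2n}$ contains no nonconstant rational curves, every rational component of a nodal domain is contracted, so I may assume the domain is a smooth torus $\Sigma_{\tau'}$; and after a reparametrisation of the domain by an element of $\OP{SL}(2,\ZZ)$ (which only relabels the generators of $\Lambda_{\tau'}$) I may assume the map induced by $w$ on fundamental groups is $\rho$ itself. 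Lifting $w$ to universal covers gives a holomorphic map $\tilde{w}\colon\CC\to\CC^n$ with
\[\tilde{w}(z+\lambda)=\tilde{w}(z)+\rho(\lambda)\qquad\text{for all }\lambda\in\Lambda_{\tau'}.\]

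The first step is to differentiate this relation. Because $\rho(\lambda)$ does not depend on $z$, the derivative $\tilde{w}'\colon\CC\to\CC^n$ is $\Lambda_{\tau'}$-periodic and holomorphic, so it descends to a holomorphic map $\Sigma_{\tau'}\to\CC^n$; as $\Sigma_{\tau'}$ is compact and connected this map is constant, say $\tilde{w}'\equiv c\in\CC^n$. Hence $\tilde{w}(z)=cz+d$ is affine. Evaluating the period relation at $\lambda=1$ and $\lambda=\tau'$ gives $c=\rho(e_1)$ and $\tau'c=\rho(e_2)$ as vectors of $\CC^n\cong\RR^{2n}$, so $c$ is determined by $\rho$ and $\tau'$ is forced to be the unique number with $\OP{Im}\tau'>0$ for which $\rho(e_2)=\tau'\rho(e_1)$ in the complex line $\Pi(\rho)$. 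The only free parameter left is $d\in\CC^n$, which translates the curve by an element of $T^{2n}$. Writing $u$ for the representative with $d=0$, we conclude that every $J_\psi$-holomorphic curve in the class $[\rho]$ is an affine translate of $u$.

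The only substantive input above is the vanishing of $\tilde{w}'$, which uses compactness of the domain; everything else is bookkeeping recording how the homology class $[\rho]$ and the condition $\psi\in\WW(\rho)$ pin down both the linear part $c$ and the domain modulus $\tau'$. It is worth making one point explicit: $J_\psi$ is a constant (translation-invariant, $g$-orthogonal) endomorphism of $\RR^{2n}$, so its Nijenhuis tensor vanishes and it is integrable, which is what legitimises identifying $(\RR^{2n},J_\psi)$ with $\CC^n$ and treating $\tilde{w}$ as holomorphic in the ordinary sense. This integrability is precisely what fails in the nonabelian setting studied later in the paper, where the analogue of this lemma instead rests on the maximum-principle argument of Proposition \ref{classiftori}.
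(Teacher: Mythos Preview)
Your proof is correct and takes essentially the same Liouville-type route as the paper: pass to the universal cover and exploit the fact that $J_\psi$ is a constant linear complex structure. The only tactical difference is that the paper subtracts a linear comparison map $h$ and applies the maximum principle to the bounded harmonic difference $\tilde{u}-h$, whereas you differentiate and use that the $\Lambda_{\tau'}$-periodic holomorphic map $\tilde{w}'\colon\Sigma_{\tau'}\to\CC^n$ is constant; these are interchangeable incarnations of Liouville.

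One small slip: the assertion that an $\OP{SL}(2,\ZZ)$-reparametrisation of the domain lets you arrange $\pi_1(w)=\rho$ is not correct in general---two homomorphisms $\ZZ^2\to\ZZ^{2n}$ with the same $\Lambda^2$ need not lie in the same $\OP{SL}(2,\ZZ)$-orbit (indeed the paper counts $\sigma_1(\ell)$ such orbits). Fortunately your argument never actually uses this: the differentiation step only needs the equivariance $\tilde{w}(z+\lambda)=\tilde{w}(z)+\pi_1(w)(\lambda)$ for whatever $\pi_1(w)$ happens to be, and since any $\rho'$ with $[\rho']=[\rho]$ spans the same $2$-plane $\Pi(\rho)$, the resulting affine map still has image a translate of $\Pi(\rho)$, which is what ``affine translate of $u$'' means here.
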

\begin{proof}
This is Liouville's theorem. Let $(a,b)$ be conformal coordinates on $T^2$ and let $\tilde{u}\colon\RR^2\to\RR^{2n}$ denote the lift of an arbitrary $J_{\psi}$-holomorphic curve $u$ in the homology class $[\rho]$ to the universal cover. The Cauchy-Riemann equations are linear and the complex structure is constant
\[\partial_b\tilde{u}=\psi\partial_a\tilde{u}\]
hence in each coordinate of $\RR^{2n}$ the Laplacian $\Delta \tilde{u}_i=\partial_a^2\tilde{u}_i+\partial_b^2\tilde{u}_i=0$. We also have $\Delta h=0$ where $h$ is the inclusion of $\Pi\subset\RR^{2n}$.

It is easy to see that $u$ and $v$ are homotopic and hence $\tilde{u}-h$ is bounded and harmonic. By the maximum principle it is constant which proves that the two curves are affine translates of one another.
\end{proof}

This implies there is precisely one $J_{\psi}$-holomorphic curve in the class $[\rho]\neq 0$ through each point for any $\psi\in\WW(\rho)$. Therefore the family Gromov-Witten invariant is
\[\GW_{1,1}(\WW,[\rho])=\sigma_1(\ell([\rho]))[T^{2n}]\otimes[\WW(\rho)]\in H_*(T^{2n}\times\WW;\ZZ).\]


\subsection{Family Gromov-Witten invariants}\label{famGW}


Family Gromov-Witten invariants have been defined, calculated and used in many places in the literature including \cite{Buse,KedOno,KlemmEtAl,LeOno,Lee,Lee2,LeeLeung,LeeParker,Lu,MauPan,Sei}. Below, we explain the special cases we require. For more details see \cite{MS04} and \cite{RT2}.

\subsubsection{Setting}

We first set up some notation and assumptions for the rest of this section.

\begin{ass}\label{ass:1}
Let $X$ be a compact, connected, smooth, oriented manifold. Let $\Omega$ denote the space of symplectic forms on $X$. Let $B$ be a compact, smooth, oriented manifold and $\omega$ be a \emph{family of symplectic structures on $X$}, that is a map $\omega\colon B\to\Omega$. We will assume that $(X,\omega(b))$ is a symplectically aspherical symplectic manifold with $c_1=0$. We will denote by $A\in H_2(X;\ZZ)$ a non-zero homology class.
\end{ass}

\begin{rmk}
Note that the Kodaira-Thurston manifold, which is our main example, is a quotient of a nilpotent Lie group $N$ by a cocompact discrete subgroup equipped with a left-invariant symplectic form. All such examples are aspherical and satisfy $c_1=0$. We will specify the family $\omega$ in Definition \ref{dfn:twist}.
\end{rmk}

Let $\mathcal{J}$ denote the space of almost complex structures on $X$.

\begin{dfn}
A family of $\omega$-compatible almost complex structures $J$ is a map
\[J\colon B\to\mathcal{J}\]
such that $J(b)$ is $\omega(b)$-compatible for all $b\in B$. We will write $\mathcal{J}(B)$ for the space of families of $\omega$-compatible almost complex structures.
\end{dfn}

\subsubsection{Complex structures on the torus}\label{sct:torus}

Let $(p,q)$ be coordinates on $\RR^2$ and
\[j_{\tau}=\left(\begin{array}{cc}
-\frac{\tau_1}{\tau_2} & -\left(\frac{\tau_1^2}{\tau_2}+\tau_2\right)\\
\frac{1}{\tau_2} & \frac{\tau_1}{\tau_2}
\end{array}\right)\]
be a complex structure, where $\tau=\tau_1+i\tau_2$ is an element of
the upper half-plane $\HH\subset \CC$. This descends to the quotient
$\torus$ and gives a complex torus $\Sigma_{\tau}$. Equivalently
we can consider coordinates $(a,b)$ on $\RR^2$ with the complex
structure
\[j_i=\left(\begin{array}{cc}
0 & -1\\
1 & 0
\end{array}\right)\]
and divide by the lattice $\Lambda_{\tau}=\langle1,\tau\rangle$, i.e.
\[\Sigma_{\tau}=(\torus,j_{\tau})\stackrel{\Phi_{\tau}}{\cong}(\Lambda_{\tau}\backslash\RR^2,j_{i})\]
where the diffeomorphism is
\[\Phi_{\tau}\left(\begin{array}{c}p \\ q\end{array}\right)=\left(\begin{array}{cc}
1 & \tau_1\\
0 & \tau_2
\end{array}
\right)\left(\begin{array}{c}
p\\
q
\end{array}\right)=\left(\begin{array}{c}
a\\
b
\end{array}\right)\]
or
\[\Phi_{\tau}^{-1}\left(\begin{array}{c}a \\ b\end{array}\right)=\left(\begin{array}{cc}
1 & -\frac{\tau_1}{\tau_2}\\
0 & \frac{1}{\tau_2}
\end{array}
\right)\left(\begin{array}{c}
a\\
b
\end{array}\right)=\left(\begin{array}{c}
p\\
q
\end{array}\right).\]

\subsubsection{Moduli space of pseudoholomorphic maps}

\begin{dfn}
Given a non-zero homology class $A\in H_2(X;\ZZ)$ and a family of compatible almost complex structures $J\in\mathcal{J}(B)$, define the space
\[\mM_{1,1}(A,J)\]
consisting of equivalence classes of quadruples $(u,\tau,z,b)$ where $\tau\in\HH$, $b\in B$, $z\in\torus$ is a marked point and $u$ is a $(j_{\tau},J(b))$-holomorphic map
\[u\colon\torus\to X,\qquad d_zu(j_{\tau}v)=J(b)d_zu(v)\]
such that $u_*([\torus])=A$. We say that two quadruples are equivalent $(u,\tau,z,b)\sim(u',\tau',z',b')$ if there exists a diffeomorphism $\varphi\colon\torus\to\torus$ such that
\[b=b',\ u'=u\circ\varphi^{-1},\ z'=\varphi(z),\ \ j_{\tau'}=\varphi^*j_{\tau}.\]
Note that, since $g=1$, $c_1=0$ and there is one marked point, the expected dimension of this moduli space is $\dim(B)+2$. There is also a well-defined evaluation map
\[\ev\colon\mM_{1,1}(A,J)\to X\times B,\qquad\ev(u,z,\tau,b)=(u(z),b).\]
\end{dfn}
\begin{rmk}[Compactness]\label{rmk:compactification}
To compactify the moduli space of genus one curves with one marked point we consider the moduli space $\overline{M}_{1,1}(A,J)$ of genus one stable maps to $X$ with one marked point. If $X$ is symplectically aspherical then the domain of a stable map in $\overline{M}_{1,1}(A,J)$ is necessarily an irreducible smooth genus one curve, so the moduli space $\mM_{1,1}(A,J)$ is already compact. To see this, note that if the domain is nodal then there is a sphere component with precisely two special points (that is points which are either marked or nodal). By stability, this sphere component must be non-constant, but since we are assuming $X$ to be symplectically aspherical a stable map can have no non-constant sphere components.
\end{rmk}
If $J$ is regular (see Definition \ref{dfn:regul} below) then $\mM_{1,1}(A,J)$ is a smooth, compact, oriented, $(\dim B+2)$-dimensional manifold and we can define the Gromov-Witten invariant to be the homology class
\begin{equation}\label{dfn:gw}\OP{GW}_{1,1}(\omega,A)=\ev_*([\mM_{1,1}(A,J)])\in H_*(X\times B;\ZZ)\end{equation}
which is equivariant under diffeomorphisms $\phi$ of $X$:
\begin{equation}\label{gwequi}\phi_*\GW_{1,k}(\omega,A)=\GW_{1,k}((\phi^{-1})^*\omega,\phi_*A).\end{equation}
To define genus one Gromov-Witten invariants properly \cite{RT2} one must study moduli spaces of solutions to the perturbed Cauchy-Riemann equations for a suitable perturbation $\nu$ depending on $z\in\Sigma$ and $j_{\tau}$. We omit further discussion of the general definition because in all our examples, pseudoholomorphic curves are either regular or can be made regular by a perturbation of $J\in\mathcal{J}(B)$.

\subsubsection{Regularity and obstructions}\label{sct:regul}
Let $\mB$ denote the $W^{1,p}$-completion of the space of smooth maps $u\colon\torus\to X$. There is a Banach bundle $\mE$ over $B\times\HH\times\mB$ whose fibre at $(b,\tau,u)$ is the $L^p$-completion
\[L^p\Omega^{0,1}_{j_{\tau},J(b)}(\Sigma,u^*TX).\]
There is a section $\dbar\colon B\times\HH\times\mB\to\mE$ given by
\[\dbar(b,\tau,u)=J(b)du-du\circ j_{\tau}.\]
If $(b,\tau,u)\in\dbar^{-1}(0)$ then $u$ is a $(j_{\tau},J(b))$-holomorphic curve and the section has a natural vertical linearisation
\[D_{(b,\tau,u)}\dbar\colon T_bB\times T_{\tau}\HH\times W^{1,p}(\Sigma,u^*TX)\to L^p\Omega^{0,1}_{j_{\tau},J(b)}(\Sigma,u^*TX)\]
called the linearised Cauchy-Riemann operator.
\begin{dfn}[Regularity]\label{dfn:regul}
We say that a family $J$ of $\omega$-compatible almost complex structures is \emph{regular} if for every $(b,\tau,u)\in\dbar^{-1}(0)$ (with $u$ simple or multiply-covered) the linearised Cauchy-Riemann operator $D_{(b,\tau,u)}\dbar$ is surjective. Equivalently, the section $\dbar$ vanishes transversely.
\end{dfn}
If $J$ is regular then we can define Gromov-Witten invariants by Equation \eqref{dfn:gw}. More generally we can compute Gromov-Witten invariants using a $J$ which is not regular but for which $\dbar$ vanishes cleanly.
\begin{dfn}[Cleanliness]\label{clean}
We say that a family $J$ of $\omega$-compatible almost complex structures is \emph{clean} if, at every point $(b,\tau,u)\in\dbar^{-1}(0)$ (with $u$ simple or multiply-covered) the moduli space $\dbar^{-1}(0)$ is a smooth manifold with tangent space $\ker(D_{(b,\tau,u)}\dbar)$. Equivalently, $\dbar$ vanishes cleanly. In this case the cokernels $\coker(D\dbar)$ form a vector bundle over $\dbar^{-1}(0)$ which we call the \emph{obstruction bundle} and denote $\Obs$.
\end{dfn}
The following theorem can be proved by a simple modification of the proof of {\cite[Proposition 7.2.3]{MS04}}. The key point is that, since $X$ is symplectically aspherical, there are no nodal genus one stable maps with one marked point, so $\mM_{1,1}(A,J)$ is compact (see Remark \ref{rmk:compactification}).
\begin{thm}
Let $(X,\omega)$ be as in Assumption \ref{ass:1}. If $J$ is a clean family of $\omega$-compatible almost complex structures then the one-point Gromov-Witten invariant is given by
\[\GW_{1,1}(\omega,A)=\ev_*\OP{PD}(e(\Obs))\]
where $\OP{PD}$ denotes Poincar\'{e} duality and $e$ denotes the Euler class.
\end{thm}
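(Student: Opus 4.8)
The plan is to follow the proof of \cite[Proposition 7.2.3]{MS04} almost verbatim; the only new point is that in genus one there is no transversality difficulty coming from multiple covers. The idea is to perturb the Cauchy--Riemann section $\dbar$ to a transverse one, but to arrange the perturbation so that near the clean moduli space it is literally the pullback of a generic section of the obstruction bundle. The zero set of the perturbed section is then identified with the zero set of that obstruction section over the clean moduli space, which represents $\OP{PD}(e(\Obs))$, and pushing forward along $\ev$ yields the formula.

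In more detail, I would first record the local normal form near $\mM:=\dbar^{-1}(0)$. Cleanliness says that $\mM$ is a smooth submanifold of $B\times\mathcal{T}(\Sigma)\times\mB$ with $T\mM=\ker D\dbar$ and that the cokernels $\Obs$ form a vector bundle over it; hence the images of $D\dbar$ form a complementary subbundle of $\mE|_{\mM}$, which I would extend (together with $\Obs$) to a splitting $\mE|_{\mathcal{U}}=R\oplus\pi^*\Obs$ over a tubular neighbourhood $\mathcal{U}$ of $\mM$ with retraction $\pi\colon\mathcal{U}\to\mM$. The $R$-component $\dbar_R=\pr_R\circ\dbar$ is a submersion along $\mM$ with $\ker D\dbar_R|_{\mM}=T\mM$, so by the implicit function theorem $\dbar_R^{-1}(0)=\mM$ after shrinking $\mathcal{U}$, and the $\Obs$-component $\dbar_\Obs$ vanishes identically on $\mM$. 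I would then pick a generic smooth section $\bar s$ of the obstruction bundle over the moduli space $\overline{\mM}=\mM_{1,1}(A,J)$ (which, by cleanliness and the compactness hypothesis, is a closed oriented manifold to which $\Obs$ descends and on which $\ev$ is defined), regard $\bar s\circ\pi$ as a section of $\pi^*\Obs$, multiply it by a cutoff equal to $1$ near $\mM$ and supported in $\mathcal{U}$, and perturb $\dbar$ to $\dbar_\varepsilon:=\dbar+\varepsilon(0,\bar s\circ\pi)$. Since the perturbation has vanishing $R$-component, the $R$-component of $\dbar_\varepsilon$ is still $\dbar_R$, and therefore $\dbar_\varepsilon^{-1}(0)\cap\mathcal{U}=\mM\cap\{\dbar_\Obs+\varepsilon\,\bar s\circ\pi=0\}=\{x\in\overline{\mM}:\bar s(x)=0\}$. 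For generic $\bar s$ this is a closed submanifold of $\overline{\mM}$ whose homology class is $\OP{PD}(e(\Obs))$, and the transversality of $\bar s$ to the zero section is exactly the surjectivity of the linearisation of $\dbar_\varepsilon$ along it, so $\dbar_\varepsilon^{-1}(0)$ is a regular moduli space.

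To finish I would check three things: (i) for $\varepsilon$ small, $\dbar_\varepsilon^{-1}(0)$ still lies in $\mathcal{U}$ and, by asphericity (no sphere bubbles, and no degeneration of the domain to the boundary of Deligne--Mumford space since $A\neq0$) together with Gromov compactness, is compact; (ii) the perturbations $\varepsilon(0,\bar s\circ\pi)$ lie in a class for which the resulting evaluation pseudocycle computes $\GW_{1,1}(\omega,A)$ --- here one uses, exactly as in \cite{MS04}, that for aspherical targets it suffices to make simple genus-one curves transverse, and that an unbranched genus-one cover of a regular genus-one curve is automatically regular, so the Sard--Smale argument of \cite{MS04} applies in the family without modification; (iii) all the identifications above respect the standard orientations of moduli spaces and of $\Obs$ (coherent orientations of determinant lines, as in \cite[Chapter 8]{MS04}). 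Granting (i)--(iii), we obtain $\GW_{1,1}(\omega,A)=\ev_*[\dbar_\varepsilon^{-1}(0)]=\ev_*[\{\bar s=0\}]=\ev_*\OP{PD}(e(\Obs))$.

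I expect the genuinely delicate part to be the bookkeeping behind step (ii): arranging a generic perturbation that is simultaneously localised near $\mM$ and of the prescribed obstruction-bundle shape, keeping track of the residual reparametrisation group so that $\Obs$ really descends to $\overline{\mM}$, and matching up orientations. All of this is, however, precisely the content of \cite[Proposition 7.2.3]{MS04}, and passing from genus zero to genus one changes nothing once one knows, as in (ii), that genus-one multiple covers cause no transversality trouble.
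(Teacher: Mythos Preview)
Your proposal is correct and takes essentially the same approach as the paper: the paper itself gives no proof beyond the one-sentence remark that the result ``can be proved by a simple modification of the proof of \cite[Proposition 7.2.3]{MS04} to higher genus,'' and your outline is precisely that modification, with the key genus-one observation (unbranched covers of regular tori are regular, so multiple covers pose no transversality problem) made explicit.
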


\subsubsection{Orientations}
To really make sense of the fundamental class of the moduli space or of the Euler class of the obstruction bundle one needs orientations. We therefore briefly recall how to orient our moduli spaces when they are clean. Recall that $D\dbar|_{W^{1,p}(\Sigma,u^*TX)}$ splits as a sum of its (Fredholm) complex linear and a (compact) complex antilinear parts. We abuse terminology by calling
\[\frac{1}{2}\left(D\dbar(\alpha,\eta,\xi)-\psi D\dbar(\alpha,\eta,\psi\xi)\right)\]
the complex linear part of $D\dbar$ (it is only complex linear in $\xi$).

The linearised Cauchy-Riemann operator is homotopic through Fredholm operators to its complex linear part. There is a {\em determinant bundle} over the space of Fredholm operators whose fibre at $D$ is the determinant line $\Lambda^{\dim\ker(D)}\ker(D)\otimes\Lambda^{\dim\OP{coker}(D)}\OP{coker}(D)$. When the moduli space is regular (so that its tangent space at $u$ is the kernel of $D\dbar$) an orientation of the determinant line is precisely an orientation of the moduli space. Having chosen an orientation on $B$, the determinant line of a complex linear Cauchy-Riemann operator is canonically oriented and one can transport this orientation along a linear homotopy of operators from $D\dbar$ to its complex linear part.

When the moduli space is clean rather than regular an orientation of the determinant line is still all that is needed to define the Euler class of the obstruction bundle.


\section{Two-step nilpotent Lie groups}\label{nilp}


\subsection{Generalities}

A Lie group $N$ is called \emph{$k$-step nilpotent} if its lower central series
\[N\supset [N,N]\supset [N,[N,N]]\supset\cdots\]
reaches the trivial group in $k$ steps. In particular, all iterated Lie brackets of $k+1$ or more elements vanish. We are interested in two-step nilpotent groups. The main advantage of this class is the simplicity of the Baker-Campbell-Hausdorff formula
\[\exp(X)\exp(Y)=\exp\left(X+Y+\frac{1}{2}[X,Y]\right)\]
for the logarithm of a product.

Henceforth, $N$ will denote a connected, simply-connected, two-step nilpotent Lie group of even dimension with Lie algebra $\nn$. For computational convenience we will implicitly embed $N$ into a real linear group $\OP{GL}(V)$ and $\nn$ into $\mathfrak{gl}(V)$ so that we can write $a+b\in\OP{GL}(V)$ for $a,b\in N$ and $XY\in\mathfrak{gl}(V)$ for $X,Y\in\nn$. Note that such an embedding exists by Engel's theorem and that the exponential map $\exp\colon\nn\to N$, which thanks to the embedding in $\OP{GL}(V)$ we can now write
\[\exp(X)=1+X+\frac{1}{2}X^2+\cdots,\]
is a diffeomorphism. We denote its inverse by $\log$.

Since $\nn$ is a linear space there is a canonical isomorphism $T\nn\cong\nn\times\nn$ so we will write $(X,Y)\in\nn\times\nn$ and $Y\in T_{X}\nn$ to mean the same thing. There is a canonical map $\pi_N\colon TN\to\nn$ defined by
\begin{equation}\label{pinbig}
\pi_N(X)=L(s^{-1})_*X\mbox{ for }X\in T_sN
\end{equation}
Here $L(s)\colon N\to N$ is the left-multiplication by $s\in N$. Precomposing with $d\exp\colon T\nn\to TN$ we get a map $\pi_{\nn}\colon\nn\times\nn\to\nn$, explicitly
\begin{equation}\label{pinlittle}\pi_{\nn}(X,Y)=L(\exp(-X))_*(d_{X}\exp)(Y).\end{equation}
\begin{lma}\label{lma:2stepform}
We have
\[\pi_{\nn}(X,Y)=Y-\frac{1}{2}[X,Y].\]
\end{lma}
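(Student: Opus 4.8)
The plan is to untangle the definition \eqref{pinlittle} and then apply the Baker--Campbell--Hausdorff formula recalled above. First I would note that, under the embedding of $N$ in $\OP{GL}(V)$ with tangent vectors realised inside $\mathfrak{gl}(V)$, the differential of left-translation $L(s)$ acts on a tangent vector by left matrix multiplication by $s$; in particular $L(\exp(-X))_*$ is just multiplication by the constant matrix $\exp(-X)$. Since moreover $(d_X\exp)(Y)=\frac{d}{dt}\big|_{t=0}\exp(X+tY)$, the definition \eqref{pinlittle} becomes
\[ \pi_{\nn}(X,Y) \;=\; \exp(-X)\,\frac{d}{dt}\Big|_{t=0}\exp(X+tY) \;=\; \frac{d}{dt}\Big|_{t=0}\Big(\exp(-X)\exp(X+tY)\Big), \]
the last step because $\exp(-X)$ is independent of $t$.

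The second step is to simplify the product using the two-step Baker--Campbell--Hausdorff formula. Since $[-X,\,X+tY]=-t[X,Y]$, it gives $\exp(-X)\exp(X+tY)=\exp\big(t(Y-\tfrac12[X,Y])\big)$. Differentiating at $t=0$ and using $\frac{d}{dt}\big|_{t=0}\exp(tZ)=Z$ (immediate from the power series $\exp(W)=1+W+\tfrac12W^2+\cdots$) then yields $\pi_{\nn}(X,Y)=Y-\tfrac12[X,Y]$.

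I expect no genuine obstacle: the computation is essentially forced once the definitions are in place. The two points that need a word of care are the identification of $L(\exp(-X))_*$ with matrix multiplication --- which is precisely the content of the embedding convention set up earlier --- and the observation that two-step nilpotency is exactly what makes the Baker--Campbell--Hausdorff series terminate, so that the identity $\exp(-X)\exp(X+tY)=\exp\big(t(Y-\tfrac12[X,Y])\big)$ is exact rather than a truncation. If one preferred to avoid invoking Baker--Campbell--Hausdorff one could instead expand $d_X\exp(Y)=\sum_{n\geq1}\frac{1}{n!}\sum_{k=0}^{n-1}X^kYX^{n-1-k}$ and $\exp(-X)=\sum_{m\geq0}\frac{(-1)^m}{m!}X^m$ as matrix power series, multiply them, and collect terms using the two-step relation $X^2Y-2XYX+YX^2=0$ (which is just $[X,[X,Y]]=0$ written out); this works but is more tedious and I would not take that route.
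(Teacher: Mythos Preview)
Your proposal is correct and follows essentially the same approach as the paper's proof: both use the Baker--Campbell--Hausdorff identity $\exp(-X)\exp(X+tY)=\exp\bigl(t(Y-\tfrac12[X,Y])\bigr)$ and the fact that $L(s)_*$ is matrix multiplication under the $\OP{GL}(V)$-embedding, then differentiate at $t=0$. The only cosmetic difference is that the paper writes the derivative as a limit of difference quotients while you use $\tfrac{d}{dt}\big|_{t=0}$ directly.
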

\begin{proof}
The Baker-Campbell-Hausdorff formula implies that
\[\exp(-X)\exp(X+tY)=\exp\left(t\left(Y-\frac{1}{2}[X,Y]\right)\right)\]
so
\begin{align*}
\lim_{t\to 0}\frac{\exp(X+tY)-\exp(X)}{t}&=\lim_{t\to 0}\frac{1}{t}\exp(X)\left(\exp\left(t\left(Y-\frac{1}{2}[X,Y]\right)\right)-1\right)\\
&=\exp(X)\left(Y-\frac{1}{2}[X,Y]\right).
\end{align*}
In $\OP{GL}(V)$ we know that $L(s)_*v=sv$ so the formula follows.
\end{proof}


\subsection{The twistor family of almost K\"{a}hler structures}

Fix a two-step nilpotent Lie group $N$ as before and endow it with
\begin{itemize}
\item an orientation $\orient$ and
\item a left-invariant metric $g$ (coming from an inner product, also called $g$, on $\nn$).
\end{itemize}
Moreover, let $\Gamma$ be a cocompact lattice in $N$: these always exist if the algebra is defined over $\QQ$.

\begin{dfn}\label{dfn:twist}
The \emph{twistor family}, denoted $\WW$, is the space of pairs $(\omega_{\psi},\psi)$ where
\begin{itemize}
\item $\psi$ is a $g$-orthogonal $\orient$-positive complex structure on $\nn$,
\item $\omega_{\psi}$ is the two-form on $\nn$ associated to $g$ and $\psi$ by $g(v,w)=\omega_{\psi}(v,\psi w)$,
\end{itemize}
and such that
\[\omega_{\psi}([X,Y],Z)+\omega_{\psi}([Y,Z],X)+\omega_{\psi}([Z,X],Y)=0.\]
Any pair $(\omega_{\psi},\psi)\in\WW$ yields a left-invariant almost K\"{a}hler structure $(\Omega_{\psi},J_{\psi})$ on $N$. In particular if $s\in N$, $v\in T_sN$ and $L(s)_*$ denote the differential of left-multiplication by $s$ then
\begin{equation}\label{jdef}J_{\psi}v=L(s)_*\psi L(s^{-1})_*v.\end{equation}
By left-invariance these all descend to give almost K\"{a}hler structures on $\nilman$. We will often abusively write $\psi\in\WW$ or $\omega_{\psi}\in\WW$ or even $J_{\psi}\in\WW$ or $\Omega_{\psi}\in\WW$.
\end{dfn}
This subsumes Definition \ref{torustwist} in the case when $N$ is abelian. Notice that $\WW$ is a subvariety of $SO(2n)/U(n)$, the space of positive orthogonal complex structures on $\nn$, but may not be a smooth subvariety and it may be empty (we will of course restrict attention to examples where it is nonempty!). If it is not smooth we will restrict attention to some auspicious irreducible component of $\WW$ which is smooth.
\begin{lma}\label{twist}
Let $\zz$ denote the centre of $\nn$. For $\psi\in\WW$,
\[\psi[\nn,\nn]\subset\zz^{\perp}\]
If moreover $\zz=[\nn,\nn]\oplus\tt$ for a one-dimensional subalgebra $\tt$ then
\[\psi\zz=\zz^{\perp}.\]
\end{lma}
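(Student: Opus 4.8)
The plan is to derive both statements from the single observation that $\omega_\psi$ restricts to zero on $\zz\times\zz$. First I would record the dictionary between the metric and the form: since $g(v,w)=\omega_\psi(v,\psi w)$ and $\psi$ is $g$-orthogonal (so that $(v,w)\mapsto g(\psi v,w)$ is skew), one gets $\omega_\psi(X,Y)=-g(X,\psi Y)=g(\psi X,Y)$ for all $X,Y\in\nn$. In particular $\psi X\in\zz^\perp$ if and only if $\omega_\psi(X,Z)=0$ for every $Z\in\zz$, so everything reduces to computing $\omega_\psi$ on pairs of central elements. For the first assertion, take $W=[X,Y]\in[\nn,\nn]$ and $Z\in\zz$: because $Z$ is central, $[Y,Z]=[Z,X]=0$, so two of the three terms in the cocycle identity $\omega_\psi([X,Y],Z)+\omega_\psi([Y,Z],X)+\omega_\psi([Z,X],Y)=0$ drop out and we are left with $\omega_\psi(W,Z)=0$; extending bilinearly over $[\nn,\nn]$ and feeding this through the dictionary gives $\psi[\nn,\nn]\subseteq\zz^\perp$.

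For the second assertion I would first strengthen this to $\psi\zz\subseteq\zz^\perp$. Since $\zz=[\nn,\nn]\oplus\tt$, it is enough to show $\omega_\psi(T,Z)=0$ for $T\in\tt$ and $Z\in\zz$. If $Z\in[\nn,\nn]$ this is the first assertion again (swap the two arguments and use that $\omega_\psi$ is skew); if $Z\in\tt$ it is automatic because $\tt$ is one-dimensional and $\omega_\psi$ is skew. Hence $\omega_\psi|_{\zz\times\zz}=0$ and $\psi\zz\subseteq\zz^\perp$.

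It remains to upgrade this inclusion to an equality, and this is where I expect the real work to be. Since $\psi$ is a $g$-isometry we have $\dim\psi\zz=\dim\zz$, while $\dim\zz^\perp=\dim\nn-\dim\zz$, so $\psi\zz=\zz^\perp$ amounts to $\dim\nn=2\dim\zz$, i.e.\ to $\zz$ being Lagrangian for $\omega_\psi$. One inequality is free: the vanishing of $\omega_\psi|_{\zz\times\zz}$ says $\zz$ is $\omega_\psi$-isotropic, so $\dim\zz\le\frac12\dim\nn$. The reverse inequality --- equivalently, that $\zz$ is $\omega_\psi$-coisotropic, $\zz^{\omega_\psi}\subseteq\zz$ --- is the main obstacle, since it is the only point that uses something beyond the formal structure: one must rule out non-central directions in the $\omega_\psi$-orthogonal complement of $\zz$. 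The approach I would try is to push on the cocycle once more: for $v\in\zz^{\omega_\psi}$ the bilinear form $(w,u)\mapsto\omega_\psi([v,w],u)$ turns out to be symmetric (the cocycle identity together with $\omega_\psi([\nn,\nn],\zz)=0$ cancels its antisymmetric part), and showing it vanishes would force $v$ to be central. In the case that actually matters for this paper --- the Kodaira-Thurston Lie algebra, where $\dim\nn=4$ and $\dim\zz=2$ --- the required dimension count is immediate, so this last point can if necessary be handled by inspection rather than in general.
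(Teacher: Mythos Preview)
Your proposal is correct and follows essentially the same route as the paper: both arguments use the cocycle identity with one entry in $\zz$ to kill two of the three terms and obtain $g(\psi[\nn,\nn],\zz)=0$, then handle $\tt$ separately (your ``$\tt$ is one-dimensional and $\omega_\psi$ is skew'' is exactly the paper's ``certainly $\psi\tt\subset\tt^\perp$''). You are in fact more careful than the paper about the final step: the paper simply writes ``Hence $\psi\zz=\zz^\perp$'' after establishing the inclusion, whereas you correctly flag that equality needs $\dim\nn=2\dim\zz$ and note that this is immediate in the Kodaira--Thurston case, which is the only case the paper actually uses.
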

\begin{proof}
The first assertion follows from the equation
\[g([X,Y],\psi Z)+g([Y,Z],\psi X)+g([Z,X],\psi Y)=0.\]
When we take $\psi[X,Y]\in\psi[\nn,\nn]$ and $Z\in\zz$, the equation reduces to
\[g(\psi[X,Y],Z)=0\]
proving the first claim.

If $\tt$ is one-dimensional then certainly $\psi\tt\subset\tt^{\perp}$. Moreover the first claim implies $\psi\tt\subset[\nn,\nn]^{\perp}$. Hence $\psi\zz=\zz^{\perp}$.
\end{proof}


\subsection{Pseudoholomorphic tori}

We have set up out conventions for coordinates $(p,q)$ and complex structures $j_{\tau}$ on the torus $\torus$ in Section \ref{sct:torus}. We will write $\Delta$ for the Laplacian $\partial_a^2+\partial_b^2$. Notice that if $f\colon\RR^2\to\RR$ is a differentiable function then
\begin{equation}\label{pqabderiv}\partial_af=\partial_pf,\ \partial_bf=\frac{\partial_qf-\tau_1\partial_pf}{\tau_2}\end{equation}


\subsubsection*{The Cauchy-Riemann equations}

Fix a linear complex structure $j_{\tau}$ on $\RR^2$ and let $(a,b)$ be linear conformal coordinates (so $j_{\tau}\partial_a=\partial_b$). Consider $\psi\in\WW$ and the associated left-invariant almost complex structure $J_{\psi}$ on $N$.
\begin{dfn}
A $(j_{\tau},J_{\psi})$-holomorphic torus in a nilmanifold $\nilman$ is a map $u\colon\torus\to\nilman$ such that
\[J_{\psi}\circ du=du\circ j_{\tau}.\]
We will denote by $\pi_1(u)\colon\ZZ^2\to\Gamma$ the induced map on fundamental groups.
\end{dfn}
Note that a $(j_{\tau},J_{\psi})$-holo\-mor\-phic torus in $\nilman$ lifts to a $(j_{\tau},J_{\psi})$-holomorphic map between the universal covers
\[\tilde{u}\colon\RR^2\to N\]
in one of $\Gamma/\pi_1(u)(\ZZ^2)$ possible ways. We will fix one such lift.
\begin{lma}
If $w=\log\circ \tilde{u}\colon\RR^2\to\nn$ then
\begin{equation}\label{pseudohol}\psi\left(\partial_aw-\frac{1}{2}[w,\partial_aw]\right)=\partial_bw-\frac{1}{2}[w,\partial_bw]\end{equation}
which implies
\begin{equation}\label{harmo}\Delta w-\frac{1}{2}[w,\Delta w]=\psi[\partial_a w,\partial_b w]\end{equation}
where $\Delta=\partial_a^2+\partial_b^2$.
\end{lma}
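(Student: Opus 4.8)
### Proof proposal

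The plan is to directly compute $\pi_N \circ d\tilde u$ in terms of $w = \log \circ \tilde u$ using Lemma~\ref{lma:2stepform}, then rewrite the holomorphicity condition $J_\psi \circ d\tilde u = d\tilde u \circ j_\tau$ as an equation for $w$, and finally differentiate once more to obtain the second-order consequence.

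First I would recall that the holomorphicity equation $J_\psi \circ d\tilde u = d\tilde u \circ j_\tau$, applied to the coordinate vector field $\partial_a$ and using $j_\tau \partial_a = \partial_b$, says $J_\psi\, d\tilde u(\partial_a) = d\tilde u(\partial_b)$. Now apply $\pi_N$ (left-translation back to $\nn$, as in \eqref{pinbig}) to both sides. Since $J_\psi$ is defined by left-translating $\psi$, i.e. $J_\psi v = L(s)_* \psi L(s^{-1})_* v$ from \eqref{jdef}, we get $\pi_N(J_\psi\, d\tilde u(\partial_a)) = \psi\, \pi_N(d\tilde u(\partial_a))$. On the other hand, by definition of $\pi_\nn$ in \eqref{pinlittle} and Lemma~\ref{lma:2stepform}, $\pi_N(d\tilde u(\partial_a)) = \pi_N(d\exp(d_w w(\partial_a))) = \pi_\nn(w, \partial_a w) = \partial_a w - \tfrac12 [w, \partial_a w]$, and similarly for $\partial_b$. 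Assembling these gives exactly \eqref{pseudohol}:
\[
\psi\left(\partial_a w - \tfrac12[w,\partial_a w]\right) = \partial_b w - \tfrac12[w,\partial_b w].
\]

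For the second equation, I would apply $\partial_a$ to \eqref{pseudohol} paired with the $a \leftrightarrow b$ (with a sign) structure — more precisely, apply $\partial_b$ to the identity $\psi(\partial_a w - \tfrac12[w,\partial_a w]) = \partial_b w - \tfrac12[w,\partial_b w]$ and apply $\psi$ to the $\partial_a$-derivative of the same identity (equivalently, differentiate and use $\psi^2 = -\id$), then add. The left-hand sides combine: $\psi\partial_b(\partial_a w - \tfrac12[w,\partial_a w]) + \psi\,\psi\,\partial_a(\partial_a w - \tfrac12[w,\partial_a w])$ — here one must be careful. Let me instead organize it as: differentiate \eqref{pseudohol} by $\partial_a$ to get $\psi(\partial_a^2 w - \tfrac12[\partial_a w,\partial_a w] - \tfrac12[w,\partial_a^2 w]) = \partial_a\partial_b w - \tfrac12[\partial_a w,\partial_b w] - \tfrac12[w,\partial_a\partial_b w]$, noting $[\partial_a w,\partial_a w]=0$. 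Apply $\psi$ to this and use $\psi^2 = -\id$: $-(\partial_a^2 w - \tfrac12[w,\partial_a^2 w]) = \psi(\partial_a\partial_b w - \tfrac12[\partial_a w,\partial_b w] - \tfrac12[w,\partial_a\partial_b w])$. Separately differentiate \eqref{pseudohol} by $\partial_b$: $\psi(\partial_a\partial_b w - \tfrac12[\partial_b w,\partial_a w] - \tfrac12[w,\partial_a\partial_b w]) = \partial_b^2 w - \tfrac12[w,\partial_b^2 w]$. Adding the last two displayed equations, the two $\psi$-terms involving $\partial_a\partial_b w$ cancel against each other up to the bracket terms $-\tfrac12\psi[\partial_a w,\partial_b w] - \tfrac12\psi[\partial_b w,\partial_a w] = 0$ by antisymmetry, except the isolated $\psi(-\tfrac12[\partial_a w,\partial_b w])$ from the first combined with the $\psi(-\tfrac12[\partial_b w, \partial_a w])$ does cancel — so I must track which bracket terms survive. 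Carrying the bookkeeping through, the surviving terms yield
\[
\Delta w - \tfrac12[w,\Delta w] = \psi[\partial_a w, \partial_b w],
\]
which is \eqref{harmo}.

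The main obstacle is purely bookkeeping: keeping track of the four bracket terms that appear when differentiating $[w,\partial_a w]$ and $[w,\partial_b w]$, and verifying that everything except a single clean $\psi[\partial_a w,\partial_b w]$ on the right and $\Delta w - \tfrac12[w,\Delta w]$ on the left cancels. The two-step hypothesis is essential here — it is what makes the Baker–Campbell–Hausdorff formula (hence $\pi_\nn$) quadratic rather than an infinite series, so that $\partial\pi_\nn$ produces only these finitely many, manageable terms; no iterated brackets of length $\geq 3$ intervene. I expect the sign conventions on $\psi$ and on $j_\tau$ (specifically $j_\tau\partial_a = \partial_b$) to be the other place where care is needed, since a wrong sign would flip the right-hand side of \eqref{harmo}.
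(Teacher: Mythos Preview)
Your proposal is correct and follows essentially the same approach as the paper: left-translate the holomorphic map equation back to $\nn$ via $\pi_N$, identify $\pi_N\circ d\tilde u$ with $\pi_{\nn}(w,\partial w)$ using Lemma~\ref{lma:2stepform}, and then obtain \eqref{harmo} by cross-differentiating \eqref{pseudohol}. The paper in fact only writes ``the second order equation follows by cross-differentiating and manipulating \eqref{pseudohol}'' for this last step, so your explicit bookkeeping (differentiate by $\partial_a$, apply $\psi$, differentiate by $\partial_b$, add, and use antisymmetry of the bracket) is exactly what is intended and checks out.
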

\begin{proof}
The $(j_{\tau},J_{\psi})$-holomorphic map equation
\[J_{\psi}(\tilde{u}(a,b))d\tilde{u}(\partial_a)=d\tilde{u}(j_{\tau}\partial_b)\]
is equivalent to
\[L(\tilde{u}(a,b))_*\psi L(\tilde{u}^{-1}(a,b))_*d\tilde{u}(\partial_a)=d\tilde{u}(\partial_b)\]
because of Equation \eqref{jdef}. This implies that
\[\psi\circ\pi_N\circ d\tilde{u}(\partial_a)=\pi_N\circ d\tilde{u}(\partial_b).\]
(see Equations \eqref{pinbig} and \eqref{pinlittle} for the definition of $\pi_N$ and $\pi_{\nn}$) Splitting
\[\pi_N=\pi_N\circ d\exp\circ d\log\]
and using the fact that $\pi_N\circ d\exp=\pi_{\nn}$ we get a sequence of equations
\begin{align*}
\psi\circ\pi_N\circ d\exp\circ d\log\circ d\tilde{u}(\partial_a)&=\pi_N\circ d\exp\circ d\log\circ d\tilde{u}(\partial_b)\\
\psi\circ\pi_{\nn}\circ dw(\partial_a)&=\pi_{\nn}\circ dw(\partial_b)\\
\psi\circ\pi_{\nn}(w,\partial_aw)&=\pi_{\nn}(w,\partial_bw)
\end{align*}
and this yields \eqref{pseudohol} thanks to Lemma \ref{lma:2stepform}. The second order equation follows by cross-diff\-er\-enti\-at\-ing and manipulating \eqref{pseudohol}.
\end{proof}


\subsection{Homomorphisms}

Let $u\colon\torus\to\nilman$ be a map. The induced map $\pi_1(u)\colon\ZZ^2\to\Gamma$ on fundamental groups extends uniquely to a homomorphism $H\colon\RR^2\to N$. To see this take the images of two generators in $\ZZ^2$: these commute and hence their logarithms commute in the Lie algebra. This means that they span a two-dimensional abelian subalgebra $\RR^2$. The map $H$ is just the exponential map restricted to this subalgebra. Since $H$ sends $\ZZ^2$ into $\Gamma$, it descends to a map $v\colon\torus\to\nilman$.

\begin{lma}
The maps $u$ and $v$ are freely homotopic.
\end{lma}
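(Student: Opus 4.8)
The claim is that $u\colon\torus\to\nilman$ and the descended homomorphism $v\colon\torus\to\nilman$ are freely homotopic. The natural strategy is to exploit that $\nilman$ is a $K(\Gamma,1)$: free homotopy classes of maps from the torus correspond bijectively to conjugacy classes of homomorphisms $\ZZ^2\to\Gamma$. Since $H$ was constructed precisely so that $H|_{\ZZ^2}=\pi_1(u)$ (on the nose, after choosing a basepoint), the two maps induce the \emph{same} map on $\pi_1$ up to conjugacy, hence they are freely homotopic. So the plan is: first recall that $N$ is contractible (the exponential map is a diffeomorphism onto a vector space), so $\nilman$ is aspherical with $\pi_1=\Gamma$; then invoke the standard obstruction-theoretic fact that maps from a CW complex into an aspherical space are classified up to homotopy by the induced map on $\pi_1$ (up to conjugacy, if we do not fix basepoints), which for a torus means by the conjugacy class of the homomorphism $\pi_1\to\Gamma$.

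Concretely, I would argue as follows. Lift both maps to the universal cover: $\tilde u\colon\RR^2\to N$ is the chosen lift of $u$, and $H\colon\RR^2\to N$ is the (Lie group) lift of $v$. Both are $\pi_1$-equivariant for the \emph{same} action of $\ZZ^2$ on $\RR^2$ by deck transformations and on $N$ via $\pi_1(u)=H|_{\ZZ^2}\colon\ZZ^2\to\Gamma$ acting by left translations. Because $N$ is contractible it is in particular convex-like — more precisely, via $\log$ it is identified with the vector space $\nn$ — so the straight-line homotopy in $\nn$,
\[
\tilde u_s(x) \;=\; \exp\bigl((1-s)\log\tilde u(x) + s\,\log H(x)\bigr),\qquad s\in[0,1],
\]
interpolates between $\tilde u$ and $H$. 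I then need to check this homotopy is $\ZZ^2$-equivariant so that it descends to $\torus$. Equivariance of the endpoints gives $\log\tilde u(x+\gamma) = \log\bigl(\pi_1(u)(\gamma)\cdot\tilde u(x)\bigr)$ and likewise for $H$; since $\log$ of a left translate is an affine-linear expression in $\log$ of the point (this is exactly the two-step Baker--Campbell--Hausdorff structure, $\log(a\exp X) = \log a + X + \tfrac12[\log a, X]$ which is affine in $X$), the convex combination in $\nn$ transforms correctly, and $\tilde u_s$ is equivariant for every $s$. Hence $\tilde u_s$ descends to a homotopy $u_s\colon\torus\to\nilman$ from $u$ to $v$.

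The one genuinely delicate point — and the step I expect to be the main obstacle to write cleanly — is the verification that the straight-line homotopy $\tilde u_s$ is equivariant, i.e. that $\log$ intertwines the two $\ZZ^2$-actions in a way compatible with taking convex combinations. This is where two-step nilpotency is essential: in a general nilpotent group $\log(ab)$ is a complicated BCH series and the convex-combination trick fails, but for two-step groups $\log(\gamma\cdot\exp X)$ is affine in $X$, so a convex combination of equivariant maps (at the level of $\nn$-valued logarithms) is again equivariant. An alternative, cleaner route that sidesteps explicit formulas: simply observe $\nilman = K(\Gamma,1)$, that a free homotopy class of maps $T^2\to K(\Gamma,1)$ is determined by the conjugacy class of the induced homomorphism $\ZZ^2\to\Gamma$, and that $u$ and $v$ induce the same homomorphism $\pi_1(u)$ by construction of $H$; this is essentially a one-line argument and is probably the form in which the paper proves it. I would present the obstruction-theory version as the main argument and relegate the explicit homotopy to a remark if a constructive statement is wanted later.
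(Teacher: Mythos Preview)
Your main argument---that $\nilman$ is a $K(\Gamma,1)$ and that $u$ and $v$ induce the same homomorphism on $\pi_1$, hence are freely homotopic---is exactly the paper's approach; the paper simply unpacks the obstruction theory by hand (move basepoints together, arrange $\pi_1(u)=\pi_1(v)$ by a further free homotopy, match on the $1$-skeleton, then use $\pi_2(\nilman)=0$ to extend over the $2$-cell) rather than citing the general classification of maps into aspherical spaces.

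Your second, constructive route via the straight-line homotopy in $\nn$ is not in the paper and is a genuinely different argument. It is correct: the key point, which you identified, is that for a two-step group $\log(\gamma\cdot\exp X)=\log\gamma+X+\tfrac12[\log\gamma,X]$ is \emph{affine} in $X$, so a convex combination of $\ZZ^2$-equivariant $\nn$-valued logarithms remains equivariant and the homotopy descends. This buys you an explicit homotopy (useful if one later wants quantitative control, e.g.\ bounds on $H^{-1}\tilde u$), at the cost of being specific to the two-step setting; the paper's obstruction-theoretic argument works for any aspherical target and is what you should present if generality is the priority.
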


\begin{proof}
Let $\star$ be a basepoint of $\ZZ^2\backslash\RR^2$. Freely homotoping $u$ using a path $\gamma$ joining $u(\star)$ to $v(\star)$ allows us to assume that $u$ and $v$ are maps based at the same point $u(\star)=v(\star)$. The maps $\pi_1(u)$ and $\pi_1(v)$ on fundamental groups are conjugate by construction and this conjugation can be effected by a further free homotopy of $u$ where the base point traces out a loop based at $u(\star)$. That is, after a free homotopy one can assume $\pi_1(u)=\pi_1(v)$. By a based homotopy one can ensure that the maps $u$ and $v$ agree on the 1-skeleton of $\ZZ^2\backslash\RR^2$ (a wedge of loops). Since $\Gamma\backslash N$ is aspherical (in particular $\pi_2(\Gamma\backslash N)=0$) the homotopy can be extended to the 2-skeleton of $\ZZ^2\backslash\RR^2$.
\end{proof}

\begin{cor}\label{bdd}
Any lift $\tilde{u}\colon\RR^2\to N$ of $u$ differs from $H$ by a bounded amount, i.e. the function $H^{-1}\tilde{u}\colon\RR^2\to N$ given by
\[(p,q)\mapsto H(p,q)^{-1}\tilde{u}(p,q)\]
is bounded.
\end{cor}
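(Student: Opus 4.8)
The plan is to show that $\tilde u$ and $H$ transform identically under the deck group of the universal cover $\RR^2\to\torus$, so that the comparison map $H^{-1}\tilde u$ is doubly periodic and hence descends to the compact torus $\torus$, where it is automatically bounded.

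First I would pin down the equivariance of the two maps. Let $\rho=\pi_1(u)\colon\ZZ^2\to\Gamma$ be the monodromy homomorphism attached to the chosen lift $\tilde u$; by construction of the lift, a lattice vector $\gamma\in\ZZ^2$ acts on $\RR^2$ by translation and $\tilde u(z+\gamma)=\rho(\gamma)\,\tilde u(z)$ for all $z$, where $\rho(\gamma)\in\Gamma\subset N$ acts by left multiplication (recall $\nilman$ is a \emph{left} quotient). For $H$ I would use that $H$ is a homomorphism out of the \emph{abelian} group $\RR^2$, so that $H(\RR^2)$ is an abelian subgroup of $N$; in particular $H(z)$ commutes with $H(\gamma)=\rho(\gamma)$ for every $z\in\RR^2$, $\gamma\in\ZZ^2$, whence
\[H(z+\gamma)=H(z)H(\gamma)=H(\gamma)H(z)=\rho(\gamma)\,H(z).\]
Thus $H$ is $\rho$-equivariant in exactly the same sense as $\tilde u$ — this is, after all, precisely the computation that allowed $H$ to descend to the map $v$.

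Next I would put the two together: setting $\Phi(z)=H(z)^{-1}\tilde u(z)$, for any $\gamma\in\ZZ^2$ we get
\[\Phi(z+\gamma)=\bigl(\rho(\gamma)H(z)\bigr)^{-1}\rho(\gamma)\,\tilde u(z)=H(z)^{-1}\tilde u(z)=\Phi(z),\]
so $\Phi$ is $\ZZ^2$-periodic and descends to a continuous map $\torus\to N$; since $\torus$ is compact, its image is compact, i.e. $H^{-1}\tilde u$ lies in a bounded (precompact) subset of $N$, which is the claim. One can also see this through the preceding lemma: lifting a free homotopy from $u$ to $v$ produces a homotopy through lifts whose tracks project to paths in $\nilman$ of uniformly bounded length, so $\tilde u$ stays within a fixed distance of the canonical lift $H$ of $v$.

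There is essentially no hard step here; the only thing to be careful about is the bookkeeping — using the \emph{same} homomorphism $\rho$ for both maps (which is why it matters that $H$ is defined to extend $\pi_1(u)$ for the chosen lift $\tilde u$), and invoking the abelianness of $H(\RR^2)$ to convert $H$'s homomorphism property into left-equivariance. Once these are in place the corollary is immediate.
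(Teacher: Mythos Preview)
Your argument is correct and is in fact more direct than the paper's. You establish the $\ZZ^2$-periodicity of $H^{-1}\tilde u$ by a straightforward equivariance check, the key observation being that the image of $H$ is abelian so that the homomorphism property $H(z+\gamma)=H(z)H(\gamma)$ upgrades to left-equivariance $H(z+\gamma)=\rho(\gamma)H(z)$. Periodicity then gives a map $\torus\to N$, hence boundedness by compactness.

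The paper instead invokes the preceding lemma (that $u$ and $v$ are freely homotopic): it perturbs $u$ to a map $u'$ based-homotopic to $v$, argues that $H^{-1}\tilde u'$ descends to a \emph{nullhomotopic} map $\torus\to\nilman$ which therefore lifts to $N$, and finally observes that the lifts of $u$ and $u'$ differ by a bounded amount. Your route bypasses the homotopy lemma entirely and avoids the auxiliary perturbation; what the paper's route buys is a more homotopy-theoretic picture (the comparison map is visibly nullhomotopic in $\nilman$, not just bounded in $N$). You are right to flag the bookkeeping point that $H$ must extend the \emph{same} $\rho$ determined by the chosen lift $\tilde u$: without this the periodicity calculation fails, and indeed in a two-step group $H(z)^{-1}\gamma_0 H(z)$ is generically unbounded in $z$, so the phrase ``any lift'' in the statement should be read with this compatibility in mind.
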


\begin{proof}
After perturbation of the projection $u$ to a map $u'$ based at $\star$ and based-homotopic to $v$, the map $H^{-1}\tilde{u}'$ descends to a nullhomotopic map $\torus\to\nilman$ and hence factors as $\RR^2\to\torus\to N$. Boundedness of the map upstairs follows from compactness of $\torus$. The maps $\tilde{u}$ and $\tilde{u}'$ differ by a bounded perturbation (an equivariant lift of a compact perturbation in $\nilman$).
\end{proof}


\section{The Kodaira-Thurston manifold, $K$}\label{KT}


\subsection{Definition}

Consider the two-step nilpotent group
\[N=\left\{\left(\begin{array}{cccc}
1 & x & z & 0 \\
0 & 1 & y & 0 \\
0 & 0 & 1 & 0 \\
0 & 0 & 0 & t
\end{array}\right)\colon x,y,z,t\in\RR,\ t>0\right\}\]
and the lattice $\Gamma$ consisting of matrices with integer entries. The compact quotient $K=\Gamma\backslash N$ is called the Kodaira-Thurston manifold. The Lie algebra $\nn$ consists of matrices
\[\left(\begin{array}{cccc}
0 & x & z & 0 \\
0 & 0 & y & 0 \\
0 & 0 & 0 & 0 \\
0 & 0 & 0 & t
\end{array}\right)\]
and the exponential map is
\[\exp\left(\begin{array}{cccc}
0 & x & z & 0 \\
0 & 0 & y & 0 \\
0 & 0 & 0 & 0 \\
0 & 0 & 0 & t
\end{array}\right)=\left(\begin{array}{cccc}
1 & x & z+\frac{xy}{2} & 0 \\
0 & 1 & y & 0 \\
0 & 0 & 1 & 0 \\
0 & 0 & 0 & e^t
\end{array}\right)\]
The commutator subalgebra $[\nn,\nn]$ consists of matrices with $x=y=t=0$. The centre splits as $\zz=\tt\oplus[\nn,\nn]$ where $\tt=\{x=y=z=0\}$. We pick a basis for $\nn$:
\begin{align*}
\mathbf{n}_1&=\partial_y & \mathbf{n}_2&=\partial_x\\
\mathbf{n}_3&=\partial_t & \mathbf{n}_4&=\partial_z.
\end{align*}

Let us denote by $\WW$ the twistor family. If $\psi\in\WW$ then by Lemma \ref{twist} we know that $\psi(\zz)\subset\zz^{\perp}$. The complex structure $\psi$ is therefore specified by an isometry $\Psi\colon \zz\to \zz^{\perp}$ which we will think of as a two-by-two special orthogonal matrix (written with respect to the bases $\mathbf{n}_3,\mathbf{n}_4$ and $\mathbf{n}_1,\mathbf{n}_2$). It is not hard to check that any matrix $\Psi\in SO(2)$ gives an element $\psi\in\WW$. We will write $\psi_{\theta}$ for the almost complex structure corresponding to the matrix
\[\Psi_{\theta}=\left(\begin{array}{cc}
\cos\theta & -\sin\theta\\
\sin\theta & \cos\theta
\end{array}\right)\]
The tangent space $T_{\psi_{\theta}}\WW$ consists of matrices of the form
\[\alpha=r\left(\begin{array}{cc}
-\sin\theta & -\cos\theta\\
\cos\theta & -\sin\theta
\end{array}\right).\]


\subsection{Pseudoholomorphic tori and homomorphisms}

Given $\psi\in\WW$, suppose that $u\colon\torus\to K$ is a $(j,J_{\psi})$-holomorphic curve for some linear complex structure $j$ on $\RR^2$. Take $(a,b)$ to be linear $j$-complex coordinates on $\RR^2$. Let $\pi_1(u)$ be the induced map on the fundamental group, let $H$ be a homomorphism $\RR^2\to N$ extending $\pi_1(u)$ and let $\tilde{u}\colon\RR^2\to N$ be a lift of $u$. Denote the logarithms by $w=\log\circ\tilde{u}$ and $h=\log\circ H$. We want to compare $\tilde{u}$ and $H$ so consider $C=\log\circ(H^{-1}\tilde{u})$. By the Baker-Campbell-Hausdorff formula
\[C=w-h-\frac{1}{2}[h,w]\]
Moreover since $h$ is a homomorphism its logarithm is linear (a homomorphism of Lie algebras) and hence $\Delta h=0$.

\begin{prp}\label{classiftori}
The logarithm $C$ is constant and hence $\tilde{u}=H\exp(C)$ is a right-translate in $N$ of a homomorphism $\RR^2\to N$. In particular
\[\tilde{u}=\exp\left(h+C+\frac{1}{2}[h,C]\right)\]
\end{prp}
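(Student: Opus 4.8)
The plan is to exploit the fact that $C=\log\circ(H^{-1}\tilde u)$ is bounded (Corollary \ref{bdd}) and satisfies a second-order elliptic system to which the Hopf maximum principle applies, coordinate by coordinate. First I would write $C=C_1\mathbf n_1+C_2\mathbf n_2+C_3\mathbf n_3+C_4\mathbf n_4$ in the chosen basis and feed the substitution $w=h+C+\tfrac12[h,C]$ into the pseudoholomorphic equation \eqref{pseudohol}, then into its second-order consequence \eqref{harmo}. Because $h$ is linear ($\Delta h=0$) and the bracket is two-step (so all triple brackets vanish and $[h,C]$ is central), most of the nonlinear clutter collapses: the left-hand side $\Delta w-\tfrac12[w,\Delta w]$ becomes $\Delta C$ up to terms involving $\Delta C$ bracketed with $h$ or $C$, all of which land in the centre and only affect the $\mathbf n_3,\mathbf n_4$ components. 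The right-hand side $\psi[\partial_a w,\partial_b w]$ is, by Lemma \ref{twist}, valued in $\zz^\perp=\operatorname{span}(\mathbf n_1,\mathbf n_2)$, i.e. the \emph{non-central} directions, and is a quadratic expression in the first derivatives of $h+C$.

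The key structural observation I would extract is the triangular shape of the system. Projecting \eqref{harmo} to the non-central part $\operatorname{span}(\mathbf n_1,\mathbf n_2)$ gives equations of the form $\Delta C_i = Q_i(\partial(h+C))$ for $i=1,2$, where $Q_i$ is quadratic in first derivatives with no zeroth-order term in $C_1,C_2$ themselves on the left; crucially the right-hand side, being $\psi$ applied to a bracket, involves only the \emph{derivatives} of $C$, not $C$ undifferentiated, and the principal part is just the flat Laplacian. I would first argue that $C_1$ and $C_2$ satisfy an equation to which the Hopf maximum principle \cite[Theorem 3.1]{GT} applies — the cleanest route is to show the derivatives $\partial_a C_i,\partial_b C_i$ (or a suitable combination) are themselves bounded harmonic-type functions forced to vanish, so that $C_1,C_2$ are affine, hence (being bounded on all of $\RR^2$, equivariantly) constant. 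Once $C_1,C_2$ are constant, the quadratic right-hand side of the remaining two equations (for the central components $C_3,C_4$) becomes a \emph{fixed} constant, and $\Delta C_3,\Delta C_4$ equal bounded functions of the constants $h,C_1,C_2$; examining \eqref{harmo} in the centre shows the right side is actually zero there (it lies in $\zz^\perp$), so $C_3,C_4$ are bounded harmonic functions on $\RR^2$, hence constant by the maximum principle. Finally, $C$ constant gives $\tilde u=H\exp(C)$, and one more application of Baker--Campbell--Hausdorff, $h+C+\tfrac12[h,C]$, yields the displayed formula; the right-translate claim is immediate since $H\exp(C)=H\cdot\exp(C)$ with $\exp(C)$ a fixed group element acting on the right.

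The main obstacle I expect is verifying that the system genuinely decouples into Hopf-applicable pieces rather than merely looking like it does: the terms $[h,\Delta C]$ and $[C,\Delta C]$ appearing on the left of \eqref{harmo} mean the central components of $\Delta C$ are coupled to the non-central ones, and one must be careful that this coupling is lower-triangular (central depends on non-central, not vice versa) — this is where two-step nilpotency and the explicit form $[\nn,\nn]=\operatorname{span}(\mathbf n_4)$, $\tt=\operatorname{span}(\mathbf n_3)$ of the Kodaira--Thurston algebra are essential. A secondary subtlety is that the maximum principle needs the right sign and regularity hypotheses on the (variable) coefficients of the second-order operator after one substitutes $h$; since $h$ is linear its first derivatives are constants, so the coefficients are in fact constant or at worst affine, which should make \cite[Theorem 3.1]{GT} directly applicable, but I would check the ellipticity constant does not degenerate. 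Modulo these bookkeeping points the argument is a direct maximum-principle / Liouville argument in the spirit of the $2n$-torus case treated earlier in the paper.
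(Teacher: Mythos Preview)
Your overall strategy---reduce to a second-order elliptic system via \eqref{harmo}, exploit boundedness of $C$, and apply the maximum principle component by component---is exactly the paper's. The treatment of the central components $C_{\zz}$ once $C_{\aa}$ is known to be constant is also correct. The gap is precisely where you yourself flag it: the decoupling of the non-central part.

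Projecting \eqref{harmo} to $\aa=\operatorname{span}(\mathbf n_1,\mathbf n_2)$ gives
\[
\Delta C_{\aa}=\psi\bigl[\partial_a(h_{\aa}+C_{\aa}),\,\partial_b(h_{\aa}+C_{\aa})\bigr],
\]
which, after expanding, contains the genuinely nonlinear term $\psi[\partial_a C_{\aa},\partial_b C_{\aa}]$ and couples the two components of $C_{\aa}$. The Hopf maximum principle is a scalar statement and does not apply to such a system; your suggestion to pass to derivatives does not help, since the derivatives of $C$ are not a priori bounded and the equation they satisfy is no simpler. So as written the argument stalls here.

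The paper resolves this with a finer, $\psi$-dependent splitting: instead of $\aa\oplus\zz$ it uses $\nn=\bb\oplus\pp\oplus\qq$ with $\pp=[\nn,\nn]$ and $\qq=\psi[\nn,\nn]$. The point is that in the Kodaira--Thurston algebra both $\pp$ and $\qq$ are \emph{one-dimensional}. Since the right-hand side $\psi[\partial_a w,\partial_b w]$ lies in $\qq$ and the bracket term on the left lies in $\pp$, the $\bb$-component of \eqref{harmo} is simply $\Delta w_{\bb}=0$, giving $C_{\bb}$ constant immediately. Then in the $\qq$-equation the dangerous quadratic term $[\partial_a w_{\qq},\partial_b w_{\qq}]$ vanishes because $\qq$ is one-dimensional (the bracket of parallel vectors is zero), leaving a \emph{scalar linear} elliptic equation with constant coefficients for $C_{\qq}$, to which Hopf applies. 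Only then does one turn to $\pp$, and by that stage everything is harmonic. The one-dimensionality of $\qq$ is what kills the nonlinearity; your two-dimensional block $\aa$ does not see this, which is why the system refuses to decouple in your coordinates.
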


\begin{proof}

We decompose the Lie algebra $\nn$ as $\bb\oplus[\nn,\nn]\oplus\psi[\nn,\nn]$. Note that both $\pp=[\nn,\nn]$ and $\qq=\psi[\nn,\nn]$ are one-dimensional. We denote the corresponding components of $w$ as $w_{\bb},w_{\pp},w_{\qq}$. We have
\begin{align*}
C_{\bb}&=w_{\bb}-h_{\bb}\\
C_{\qq}&=w_{\qq}-h_{\qq}\\
C_{\pp}&=w_{\pp}-h_{\pp}-\frac{1}{2}[h,w]
\end{align*}

\par Equation \eqref{harmo} breaks up into component equations
\begin{align}
\label{CR1}\Delta w_{\bb}&=0\\
\label{CR2}\Delta w_{\qq}&=\psi[\partial_aw,\partial_bw]\\
\label{CR3}\Delta w_{\pp}&=\frac{1}{2}[w,\Delta w]
\end{align}
Equation \eqref{CR1} implies $\Delta C_{\bb}=\Delta w_{\bb}-\Delta h_{\bb}=0$ and because $C_{\bb}$ is bounded the maximum principle tells us that $C_{\bb}$ is constant. Hence $\partial w_{\bb}=\partial h_{\bb}$ is constant (where $\partial$ stands for either $\partial_a$ or $\partial_b$).

Equation \eqref{CR2} implies
\begin{align*}
\Delta C_{\qq}&=\Delta w_{\qq}-\Delta h_{\qq}\\
&=\psi[\partial_aw,\partial_bw]
\end{align*}
We can expand $w=w_{\bb}+w_{\pp}+w_{\qq}$ in the bracket and ignore the $\pp$-components since $\nn$ is two-step nilpotent. Furthermore, the term $[\partial_aw_{\qq},\partial_bw_{\qq}]$ vanishes because $\qq$ is one-dimensional and hence abelian. The term $[\partial_aw_{\bb},\partial_bw_{\bb}]=[\partial_ah_{\bb},\partial_bh_{\bb}]$ is constant. The remaining terms are $[\partial_aw_{\qq},\partial_bh_{\bb}]+[\partial_ah_{\bb},\partial_bw_{\qq}]$ which are linear first order differential operators with constant coefficients acting on the function $w_{\qq}=C_{\qq}+h_{\qq}$. Therefore
\[\Delta C_{\qq}=\psi\left([\partial_a(C_{\qq}+h_{\qq}),\partial_bh_{\bb}]+[\partial_ah_{\bb},\partial_b(C_{\qq}+h_{\qq})]+[\partial_ah_{\bb},\partial_bh_{\bb}]\right)\]
is a linear elliptic equation with constant coefficients for $C_{\qq}$. Boundedness of $C_{\qq}$ and the Hopf maximum principle \cite[Theorem 3.1]{GT} implies that $C_{\qq}$ is constant. The crucial observation is that there are no nonlinearities or couplings in Equation \eqref{CR2} because $\qq$ is one-dimensional.

We now know that $C_{\bb\oplus\qq}=w_{\bb\oplus\qq}-h_{\bb\oplus\qq}$ is constant and, since $h$ is linear $\Delta w_{\bb\oplus\qq}=0$. Equation \eqref{CR3} implies that
\begin{align*}
\Delta w_{\pp}&=\frac{1}{2}[w,\Delta w]\\
&=\frac{1}{2}[w_{\bb\oplus\qq},\Delta w_{\bb\oplus\qq}]=0,
\end{align*}
therefore
\begin{align*}
\Delta C_{\pp}&=\Delta w_{\pp}-\Delta h_{\pp}-\frac{1}{2}\Delta[h,w]\\
&=-\frac{1}{2}\Delta[h_{\bb\oplus\qq},w_{\bb\oplus\qq}]\\
&=-\frac{1}{2}\Delta[h_{\bb\oplus\qq},h_{\bb\oplus\qq}+C_{\bb\oplus\qq}]=0
\end{align*}
because $h$ is linear and $[h_{\bb\oplus\qq},h_{\bb\oplus\qq}]=0$. Again the maximum principle implies that $C_{\pp}$ is constant. We have now seen that all components of $C$ are constant.
\end{proof}


\subsection{Cohomology and its automorphisms}\label{coh}

Consider the left-invariant one-forms
\[dy,\ dx,\ dt,\ \gamma=dz-xdy\]
The first three one-forms are closed and we denote their cohomology classes by $\mathbf{e}_1,\mathbf{e}_2,\mathbf{e}_3$ respectively. They span $H^1(K;\ZZ)\subset H^1(K;\RR)$. The following classes span $H^2(K;\ZZ)$:
\begin{align*}
\mathbf{e}_{13}&=[dy\wedge dt] & \mathbf{e}_{23}&=[dx\wedge dt]\\
\mathbf{e}_{14}&=[dy\wedge\gamma] & \mathbf{e}_{24}&=[dx\wedge\gamma]
\end{align*}
Finally, the following classes span $H^3(K;\ZZ)$:
\[\mathbf{e}_{134}=[dy\wedge dt\wedge \gamma],\ \mathbf{e}_{234}=[dx\wedge dt\wedge\gamma],\ \mathbf{e}_{124}=[dy\wedge dx\wedge\gamma]\]
We define the dual bases $E_i\in H_1(K;\ZZ)$, $E_{ij}\in H_2(K;\ZZ)$ and $E_{ijk}\in H_3(K;\ZZ)$ for homology, so for example
\[\int_{E_{ijk}}\mathbf{e}_{\ell mn}=\delta_{i\ell}\delta_{jm}\delta_{kn}\]
and we write $A=\sum A_{ij}E_{ij}$, or frequently
\[[A_{13},A_{23},A_{14},A_{24}],\]
for the components of a homology class $A$.
\begin{rmk}
The symplectic form $\omega_{\theta}$ corresponding to a rotation matrix $\Psi_{\theta}$ is
\[\omega_{\theta}=dt\wedge(\cos\theta\ dx+\sin\theta\ dy)+\gamma\wedge(-\sin\theta\ dx+\cos\theta\ dy)\]
so the $\omega_{\theta}$-symplectic area of $A$ is
\[-\left(\cos\theta\ (A_{23}+A_{14})+\sin\theta\ (A_{24}-A_{13})\right).\]
\end{rmk}
Let $\phi\colon\Gamma\to\Gamma$ be a lattice automorphism. Then by rigidity for nilpotent Lie groups \cite[Theorem 2.7]{LGLA} we know that $\phi$ extends uniquely to an automorphism of $N$. As we observed in the case of the $2n$-torus, the left-invariant metrics $\phi^*g$ and $g$ are isotopic through left-invariant metrics and hence the corresponding twistor families of symplectic forms are deformation equivalent. Deformation invariance of Gromov-Witten invariants applied to Equation \eqref{gwequi} implies
\begin{equation}\label{gwinvariance}\GW_{1,k}(\WW,\phi_*A)=\phi_*\GW_{1,k}(\WW,A)\end{equation}
for any $\phi\in\Aut(\Gamma)$.
\begin{lma}\label{autos}
There is a homomorphism $\OP{SL}(2,\ZZ)\to\Aut(\Gamma)$ which projects to the standard action of $\OP{SL}(2,\ZZ)$ on $\Gamma/Z(\Gamma)\cong\ZZ^2$.
\end{lma}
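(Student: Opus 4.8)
The plan is to realise the required homomorphism by lifting the two standard generators of $\OP{SL}(2,\ZZ)$ to explicit automorphisms of $\Gamma$, using the classical presentation
\[\OP{SL}(2,\ZZ)\cong\ZZ/4\ast_{\ZZ/2}\ZZ/6,\]
with the $\ZZ/4$ generated by $S=\left(\begin{smallmatrix}0&-1\\1&0\end{smallmatrix}\right)$, the $\ZZ/6$ by $R=SU=\left(\begin{smallmatrix}0&-1\\1&1\end{smallmatrix}\right)$ (where $U=\left(\begin{smallmatrix}1&1\\0&1\end{smallmatrix}\right)$), and the amalgamated $\ZZ/2$ generated by $S^2=R^3=-\mathrm{id}$. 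By the universal property of the amalgamated product, a homomorphism $\OP{SL}(2,\ZZ)\to\Aut(\Gamma)$ is exactly a pair $\sigma,\rho\in\Aut(\Gamma)$ with $\sigma^4=\rho^6=\mathrm{id}$ and $\sigma^2=\rho^3$. I use that $\Gamma$ is generated by the $y$-, $z$- and $t$-translations $Y,Z,T$ together with $X\colon(x,y,z,t)\mapsto(x+1,y,z+y,t)$, the only nontrivial relation being $[X,Y]=Z$, with $Z$ and $T$ central; thus $[\Gamma,\Gamma]=\langle Z\rangle$, $Z(\Gamma)=\langle Z,T\rangle$, and $\Gamma/Z(\Gamma)\cong\ZZ^2$ has basis $\bar X,\bar Y$.

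First I would define endomorphisms $\rho,\sigma$ of $\Gamma$ on generators by
\[\rho\colon\ X\mapsto Y,\ \ Y\mapsto X^{-1}Y,\ \ Z\mapsto Z,\ \ T\mapsto T,\]
\[\sigma\colon\ X\mapsto Y,\ \ Y\mapsto X^{-1}Z^{-1},\ \ Z\mapsto Z,\ \ T\mapsto T.\]
To see these are well defined I would check the relation $[X,Y]=Z$: since $\Gamma$ is two-step nilpotent the commutator factors through a bi-additive alternating pairing $\Gamma^{ab}\times\Gamma^{ab}\to[\Gamma,\Gamma]=\langle Z\rangle$ normalised by $[X,Y]=Z$, so $[\rho X,\rho Y]=[Y,X^{-1}Y]=[Y,X^{-1}]=Z=\rho Z$ and likewise $[\sigma X,\sigma Y]=[Y,X^{-1}Z^{-1}]=[Y,X^{-1}]=Z=\sigma Z$; the other relations hold trivially since $Z,T$ are central. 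That $\rho,\sigma$ are automorphisms then follows because each acts on $\Gamma^{ab}\cong\ZZ^3$ (basis $\bar X,\bar Y,\bar T$) by an integral matrix of determinant $1$ and restricts to the identity on $[\Gamma,\Gamma]=\langle Z\rangle$, hence induces automorphisms on both quotients of the lower central series (and an endomorphism of a finitely generated nilpotent group inducing automorphisms on every such quotient is an automorphism).

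Next I would verify the three relations by direct computation on the generators, using repeatedly the two-step identities $Y^{-1}X^{-1}Y=X^{-1}Z^{-1}$ and $XY^{-1}X^{-1}Y=Z^{-1}$ coming from $XY=ZYX$. This gives
\[\sigma^2\colon\ X\mapsto X^{-1}Z^{-1},\ Y\mapsto Y^{-1}Z^{-1},\qquad \rho^3\colon\ X\mapsto X^{-1}Z^{-1},\ Y\mapsto Y^{-1}Z^{-1},\]
both fixing $Z$ and $T$, so $\sigma^2=\rho^3$; and since $\sigma^2$ and $\rho^3$ are central perturbations of generator-wise inversion, squaring them gives $\sigma^4=\rho^6=\mathrm{id}$. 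By the universal property we obtain a homomorphism $\OP{SL}(2,\ZZ)\to\Aut(\Gamma)$, $S\mapsto\sigma$, $R\mapsto\rho$. Finally, modulo $Z(\Gamma)$ the automorphism $\sigma$ acts by $\bar X\mapsto\bar Y,\ \bar Y\mapsto-\bar X$ and $\rho$ by $\bar X\mapsto\bar Y,\ \bar Y\mapsto-\bar X+\bar Y$; these are exactly the matrices $S$ and $R$ acting on $\ZZ^2$, so, since $S,R$ generate $\OP{SL}(2,\ZZ)$, the composite $\OP{SL}(2,\ZZ)\to\Aut(\Gamma)\to\mathrm{GL}(\Gamma/Z(\Gamma))$ is the identity, which is the asserted compatibility.

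The step I expect to require the most care is pinning down the central corrections. The naive lift $X\mapsto Y,\ Y\mapsto X^{-1}$ of $S$ has square $X\mapsto X^{-1},\ Y\mapsto Y^{-1}$, whereas the cube of the naive lift $X\mapsto Y,\ Y\mapsto X^{-1}Y$ of $R$ comes out as $X\mapsto X^{-1}Z^{-1},\ Y\mapsto Y^{-1}Z^{-1}$; the two disagree by a central factor of $Z$. One cannot absorb this discrepancy by perturbing the lift of $R$ alone—the correction exponent one is then forced to solve for satisfies an equation of the form $2\beta=1$ over $\ZZ$—so the factor $Z^{-1}$ must instead be built into $\sigma(Y)$ as above. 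Checking that this single modification makes all of $\sigma^4=\rho^6=\mathrm{id}$ and $\sigma^2=\rho^3$ hold on the nose, with no secondary obstruction appearing, is the computational heart of the argument.
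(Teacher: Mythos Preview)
Your proof is correct and in fact more careful than the paper's. The paper takes a different route: it writes down explicit coordinate automorphisms $\phi_1,\phi_2$ of $N$ (preserving $\Gamma$) lifting the generators $S=\left(\begin{smallmatrix}0&-1\\1&0\end{smallmatrix}\right)$ and $U=\left(\begin{smallmatrix}1&1\\0&1\end{smallmatrix}\right)$, namely
\[\phi_1(x,y,z,t)=(-y,x,z-xy,t),\qquad \phi_2(x,y,z,t)=\bigl(x+y,y,z+\tfrac{y(y+1)}{2},t\bigr),\]
and simply checks that these project to $S$ and $U$ on $\Gamma/Z(\Gamma)$. No relations are verified.

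Your insistence on checking the relations is not pedantry: the paper's lifts do \emph{not} satisfy $(\phi_1\phi_2)^3=\phi_1^2$ in $\Aut(\Gamma)$. A direct computation gives $\phi_1^2(x,y,z,t)=(-x,-y,z,t)$ but $(\phi_1\phi_2)^3(x,y,z,t)=(-x,-y,z+x+y,t)$; the two differ by the inner automorphism given by conjugation by $(1,-1,0,0)$. So the paper's assignment defines only a homomorphism to $\mathrm{Out}(\Gamma)$, which suffices for the homology applications later in the paper (inner automorphisms act trivially on $H_*$), but not literally the statement of the lemma. Your central correction $\sigma(Y)=X^{-1}Z^{-1}$ is exactly what is needed to get an honest homomorphism into $\Aut(\Gamma)$, and your use of the amalgamated-product presentation makes the verification clean. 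The trade-off is that the paper's explicit coordinate formulae feed directly into the computation of the action on $H_2(K;\ZZ)$ in Equation~\eqref{homologyaction}, whereas from your generator-level description one would have to translate back.
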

\begin{proof}
The homomorphism is defined on generators by
\[\sigma_1\colon =\left(\begin{array}{cc}
0 & -1\\
1 & 0
\end{array}\right)\mapsto\phi_1,\ \sigma_2:=\left(\begin{array}{cc}
1 & 1\\
0 & 1
\end{array}\right)\mapsto\phi_2\]
where
\[\phi_1\left(\begin{array}{c}
x\\
y\\
z\\
t
\end{array}\right)=\left(\begin{array}{c}
-y\\
x\\
z-xy\\
t
\end{array}\right)\]
and
\[\phi_2\left(\begin{array}{c}
x\\
y\\
z\\
t
\end{array}\right)=\left(\begin{array}{c}
x+y\\
y\\
z+\frac{y(y+1)}{2}\\
t
\end{array}\right)\]
Since the projection $\Gamma\to\Gamma/Z(\Gamma)$ is given by $(x,y,z,t)\mapsto(x,y)$ we see that the maps induced on the quotient are precisely $\sigma_1$ and $\sigma_2$.
\end{proof}
The action of $\phi_i$ on second homology is:
\begin{equation}\label{homologyaction}(\phi_i)_*[A_{13},A_{23},A_{14},A_{24}]=[\sigma_i(A_{12},A_{23}),\sigma_i(A_{14},A_{24})].\end{equation}


\subsection{The homology classes of tori}

Let $h\colon\RR^2\to\nn$ be a Lie algebra homomorphism and write
\[h(p,q)=\sum_{i=1}^4h_i(p,q)\ \mathbf{n}_i\]
where $h_i(p,q)$ are its linear coordinate functions. Its exponential is
\begin{equation}\label{expcurv}H=\exp(h)=\left(\begin{array}{ccccccc}
1 & h_2 & h_4+\frac{1}{2}h_1h_2 & 0 \\
0 & 1 & h_1 & 0 \\
0 & 0 & 1 & 0 \\
0 & 0 & 0 & e^{h_3}
\end{array}\right)
\end{equation}
Since $\RR^2$ is an abelian Lie algebra,
\begin{equation}\label{abelian}[\partial_ph,\partial_qh]=\partial_{[p}h_2\partial_{q]}h_1=0.\end{equation}
The map $H\colon\RR^2\to N$ will descend to a closed torus in $K$ if
\[\exp(h(1,0)),\exp(h(0,1))\in\Gamma.\]
Equivalently the derivatives $\partial_ph_i,\partial_qh_i$ for $i=1,2,3$, $\partial_ph_4+\frac{1}{2}\partial_ph_1\partial_ph_2$ and $\partial_qh_4+\frac{1}{2}\partial_qh_1\partial_qh_2$ must be integers.

\begin{dfn}
If $H=\exp(h)$ descends to a closed torus with homology class $A$ then we say $h$ represents the homology class $A$ and we write $[h]=A$. Equivalently, if $h=\log\circ H$ where $H\colon\RR^2\to N$ is the unique homomorphic extension of a lattice homomorphism $\rho\colon\ZZ^2\to\Gamma$ then we can write $[\rho]:=[h]$.
\end{dfn}

\begin{lma}\label{homclass}
Let $h\colon\RR^2\to\nn$ be a Lie algebra homomorphism such that $H=\exp\circ h$ descends to a closed genus one curve in $K$. Then $[h]=A$ where
\[A_{ij}=\partial_{[p}h_i\partial_{q]}h_j.\]
\end{lma}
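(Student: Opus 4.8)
The plan is to pair the torus $H(p,q)$ against each of the generating cohomology classes $\mathbf{e}_{ij}\in H^2(K;\ZZ)$ and check that $\int_{[H]}\mathbf{e}_{ij}=\partial_{[p}h_i\partial_{q]}h_j$. Since $[H]=\sum_{ij}[H](\mathbf{e}_{ij})\,E_{ij}$ by definition of the dual basis, this determines $[H]$ completely. To do this I would pull back the left-invariant forms $dy,dx,dt,\gamma=dz-xdy$ along $H=\exp\circ h$ using the explicit matrix formula \eqref{expcurv}, which gives the coordinate functions of $H$ directly: the entry $x=h_2$, the entry $y=h_1$, the entry in the $(1,3)$-slot $z+\tfrac12 xy = h_4+\tfrac12 h_1h_2$, and the diagonal entry $e^t=e^{h_3}$, so $t=h_3$.

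First I would compute the pullbacks. We get $H^*(dx)=dh_2$, $H^*(dy)=dh_1$, $H^*(dt)=dh_3$, and for $\gamma$ the key point is that $\gamma = dz - x\,dy$ and $z = (h_4+\tfrac12 h_1h_2)-\tfrac12 h_1h_2 = h_4$ along $H$ wait—more carefully, along $H$ the coordinate $z$ equals the $(1,3)$-entry minus $\tfrac12$ (the $(1,2)$-entry)(the $(2,3)$-entry)$=\,(h_4+\tfrac12 h_1h_2)-\tfrac12 h_1h_2 = h_4$, so $H^*z = h_4$ and hence $H^*\gamma = dh_4 - h_2\,dh_1$. Then I would integrate the wedge products over the fundamental domain $[0,1]^2$ of $\torus$ with coordinates $(p,q)$: since each $h_i$ is linear, $dh_i = \partial_p h_i\,dp + \partial_q h_i\,dq$, so $dh_i\wedge dh_j = (\partial_p h_i\partial_q h_j - \partial_q h_i\partial_p h_j)\,dp\wedge dq = \partial_{[p}h_i\partial_{q]}h_j\,dp\wedge dq$, and integrating over the unit square gives exactly $\partial_{[p}h_i\partial_{q]}h_j$. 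This immediately yields $\int_{[H]}\mathbf{e}_{13}=\partial_{[p}h_1\partial_{q]}h_3 = A_{13}$ and similarly $\int_{[H]}\mathbf{e}_{23}=A_{23}$.

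For the two classes involving $\gamma$, I would compute $H^*(dy\wedge\gamma) = dh_1\wedge(dh_4 - h_2\,dh_1) = dh_1\wedge dh_4$ since $dh_1\wedge dh_1 = 0$; integrating gives $\partial_{[p}h_1\partial_{q]}h_4 = A_{14}$. Likewise $H^*(dx\wedge\gamma)=dh_2\wedge dh_4$ integrates to $A_{24}$. This handles all four generators and completes the identification $[h] = \sum A_{ij}E_{ij}$ with $A_{ij}=\partial_{[p}h_i\partial_{q]}h_j$.

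The only genuine subtlety—and the step I would be most careful about—is the bookkeeping for the coordinate $z$ versus the matrix entry, i.e. correctly extracting $H^*\gamma = dh_4 - h_2\,dh_1$ from \eqref{expcurv}; the $\tfrac12 h_1h_2$ correction terms must cancel exactly, which they do precisely because $\gamma$ was chosen with the $-x\,dy$ correction. Everything else is the routine observation that for linear functions $h_i$ on $\RR^2$ the integral of $dh_i\wedge dh_j$ over the unit torus is the $2\times 2$ minor $\partial_{[p}h_i\partial_{q]}h_j$. No bubbling, asphericity, or analytic input is needed here; this is a pure de Rham computation once the explicit form of $H$ is in hand.
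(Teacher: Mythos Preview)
Your approach is sound and reaches the right answer, but the step you flag as the only subtlety contains an error that is rescued by a fact you never invoke.

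The coordinate $z$ appearing in $\gamma = dz - x\,dy$ is the \emph{group} coordinate, i.e.\ the $(1,3)$-entry of the matrix in $N$; one checks directly that this reading makes $\gamma$ left-invariant. Hence $H^*z = h_4 + \tfrac{1}{2}h_1h_2$, not $h_4$, and
\[
H^*\gamma \;=\; dh_4 + \tfrac{1}{2}h_1\,dh_2 - \tfrac{1}{2}h_2\,dh_1,
\]
not $dh_4 - h_2\,dh_1$. The $\tfrac{1}{2}h_1h_2$ correction does \emph{not} cancel against the $-x\,dy$ term in the way you describe. What actually kills the extra contributions when you wedge with $dh_1$ or $dh_2$ is Equation~\eqref{abelian}: since $h$ is a homomorphism from an abelian Lie algebra, $\partial_{[p}h_2\partial_{q]}h_1 = 0$, equivalently $dh_1\wedge dh_2 = 0$. (Indeed, even your incorrect formula for $H^*\gamma$ would need this to justify $dh_2\wedge H^*\gamma = dh_2\wedge dh_4$.) Once you invoke the abelian condition explicitly, your argument is complete and correct.

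For comparison, the paper's proof sidesteps the coordinate bookkeeping altogether by passing to Lie algebra cohomology: it uses the isomorphism $H^2(K;\RR)\cong H^2_{\mathrm{Lie}}(\nn)$ and observes that the pullback along the torus is simply $\Lambda^2 h^{\vee}$, whose entries are the $2\times 2$ minors $\partial_{[p}h_i\partial_{q]}h_j$. Your direct de Rham computation is more elementary and equally valid once the role of \eqref{abelian} is made explicit; the paper's route is cleaner precisely because the abelianness of the domain is built into the functoriality of $\Lambda^2 h^{\vee}$ rather than appearing as a separate cancellation.
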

\begin{proof}
By considering closed invariant two-forms on $N$ we get an isomorphism \cite[Corollary 4.7]{LGLA}
\[H^2(K;\RR)\cong H_{\OP{Lie}}^2(\nn).\]
Similarly we have an isomorphism $H^2(T^2;\RR)\cong H_{\OP{Lie}}^2(\RR^2)\cong\RR$. In terms of these isomorphisms the pullback map $H^2(K;\RR)\to H^2(T^2;\RR)$ is just the pullback in Lie algebra cohomology induced by the homomorphism $h$. This pullback is induced by the map
\[\Lambda^2h^{\vee}\colon\Lambda^2\nn^{\vee}\to\Lambda^2\RR^2\]
which simply takes the two-by-two minors of the matrix representing $h$, whose rows are $(\partial_ph_i,\partial_qh_i)$.

The subspace of $\Lambda^2\nn^{\vee}$ spanned by ${\bf e}_{ij}$ consists of Lie cochains and projects isomorphically to $H_{\OP{Lie}}^2(\nn)$. The coefficients $A_{ij}$ of $A$ are precisely the pullbacks of these forms to $H_{\OP{Lie}}^2(\RR)\cong\RR$ and these are just the minors of the transpose of the matrix whose columns are $\partial_ph$ and $\partial_qh$.
\[A_{ij}=h^*[{\bf e}_{ij}]=\partial_{[p}h_i\partial_{q]}h_j.\]
\end{proof}
Note that it is not immediately obvious why $\partial_{[p}h_1\partial_{q]}h_4$ is an integer (though it follows from the lemma).
\begin{lma}\label{pluck}
If $A=[h]$ for some homomorphism $h\colon\RR^2\to\nn$ then
\[A_{13}A_{24}=A_{14}A_{23}.\]
\end{lma}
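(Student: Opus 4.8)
The plan is to exploit the formula $A_{ij}=\partial_{[p}h_i\partial_{q]}h_j$ from Lemma \ref{homclass} together with the abelianness relation \eqref{abelian}, which says $\partial_{[p}h_2\partial_{q]}h_1=0$. The key observation is purely algebraic: the quantities $A_{ij}$ are the $2\times 2$ minors of the $4\times 2$ matrix $M$ whose $i$-th row is $(\partial_p h_i,\partial_q h_i)$, so the claimed identity $A_{13}A_{24}=A_{14}A_{23}$ is a Plücker-type relation among these minors.

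First I would set $u=(\partial_p h_i)_{i=1}^4$ and $v=(\partial_q h_i)_{i=1}^4$, so that $A_{ij}=u_iv_j-u_jv_i$. Then I would compute directly:
\[
A_{13}A_{24}-A_{14}A_{23}=(u_1v_3-u_3v_1)(u_2v_4-u_4v_2)-(u_1v_4-u_4v_1)(u_2v_3-u_3v_2).
\]
Expanding both products, the eight resulting terms reorganise into $(u_1v_2-u_2v_1)(u_3v_4-u_4v_3)$, which is exactly $A_{12}A_{34}$. (This is the standard Plücker identity $p_{13}p_{24}-p_{14}p_{23}=p_{12}p_{34}$ for the Grassmannian of $2$-planes in $4$-space, reflecting the fact that $\Lambda^2$ of a rank-$\le 2$ map has rank $\le 1$.) Now $A_{12}=\partial_{[p}h_1\partial_{q]}h_2=-\partial_{[p}h_2\partial_{q]}h_1=0$ by \eqref{abelian}, since $\RR^2$ is abelian. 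Therefore $A_{13}A_{24}-A_{14}A_{23}=A_{12}A_{34}=0$, which is the claim.

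The only mild subtlety is bookkeeping: the matrix $M$ here is $4\times 2$ but the relevant minor $A_{12}$ vanishes for an intrinsic reason (abelianness of the domain lattice), not because $M$ has low rank in general. So the main step is simply to recognise that the desired relation is the Plücker quadric applied to the row space of $M$ and then to invoke \eqref{abelian}; there is no analytic input required beyond Lemma \ref{homclass}. I do not anticipate any real obstacle here — the computation is a short polynomial identity — but care should be taken that the sign conventions in $A_{[ij]}$ and in \eqref{abelian} are consistent, so that $A_{12}$ is genuinely the minor that \eqref{abelian} kills.
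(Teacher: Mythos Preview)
Your proof is correct and follows essentially the same route as the paper's: both recognise the identity as the Pl\"{u}cker quadric relation $A_{12}A_{34}-A_{13}A_{24}+A_{14}A_{23}=0$ for the $2\times 2$ minors of the matrix of $h$, and then invoke the abelianness relation \eqref{abelian} to kill $A_{12}$. The only cosmetic difference is that you verify the Pl\"{u}cker identity by direct polynomial expansion, whereas the paper cites it as the standard fact that a decomposable bivector $\partial_ph\wedge\partial_qh\in\Lambda^2\nn$ lies on the Pl\"{u}cker quadric.
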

\begin{proof}
There is a commutative diagram of Pl\"{u}cker maps
\[\begin{CD}
\RR^2\times\RR^2 @>{\wedge}>> \Lambda^2\RR^2\cong\RR\\
@V{h\times h}VV @VV{\Lambda^2h}V\\
\nn\times\nn @>>{\wedge}> \Lambda^2\nn
\end{CD}\]
The image of $(\partial_p,\partial_q)\in\RR^2\times\RR^2$ in $\Lambda^2\nn$ is the sextuple of two-by-two minors $D_{ij}=\partial_{[p}h_i\partial_{q]}h_j$ of the matrix of $h$. This sits inside the Pl\"{u}cker quadric
\[D_{12}D_{34}-D_{13}D_{24}+D_{14}D_{23}=0\]
However, $D_{12}=0$ because $\RR^2$ is abelian and Lemma \ref{homclass} implies that $D_{ij}=A_{ij}$ for $i=1,2$, $j=3,4$.
\end{proof}

\begin{lma}\label{wlog}
If $h\colon\RR^2\to\nn$ is a homomorphism with $[h]=A$ then there is an automorphism $\phi$ of $\Gamma$ such that $\phi_*A=[m,m,n,n]$ where
\[m=\gcd(A_{13},A_{23}),\ n=\gcd(A_{14},A_{24}).\]
Defining $a=A_{13}/m$, $b=A_{23}/m$, we see from Lemma \ref{pluck} that
\[a=A_{14}/n,\ b=A_{24}/n,\mbox{ and }\gcd(a,b)=1.\]
\end{lma}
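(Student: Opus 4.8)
The plan is to carry out the reduction entirely by automorphisms of $\Gamma$, using two ingredients. The first is Lemma~\ref{pluck}, which constrains the homology class: the relation $A_{13}A_{24}=A_{14}A_{23}$ says exactly that the two columns $(A_{13},A_{23})$ and $(A_{14},A_{24})\in\ZZ^2$ are linearly dependent. The second is the homomorphism $\OP{SL}(2,\ZZ)\to\Aut(\Gamma)$ of Lemma~\ref{autos}, whose induced action on $H_2(K;\ZZ)$ is, by Equation~\eqref{homologyaction}, the \emph{diagonal} action of $\OP{SL}(2,\ZZ)$ on the two pairs $(A_{13},A_{23})$ and $(A_{14},A_{24})$ through the standard representation. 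So the strategy is: extract a single primitive vector carrying both pairs, then use $\OP{SL}(2,\ZZ)$ to rotate it to $(1,1)$.

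First I would treat the generic case $(A_{13},A_{23})\neq 0\neq(A_{14},A_{24})$. By linear dependence there is a primitive vector $(a,b)\in\ZZ^2$ (so $\gcd(a,b)=1$) with $(A_{13},A_{23})=m(a,b)$ and $(A_{14},A_{24})=n(a,b)$, where $m=\gcd(A_{13},A_{23})$ (taken positive, which fixes the sign of $(a,b)$) and $n$ is the integer with $(A_{14},A_{24})=n(a,b)$, so $|n|=\gcd(A_{14},A_{24})$; this is precisely the identity $a=A_{13}/m=A_{14}/n$, $b=A_{23}/m=A_{24}/n$ of the statement. Since $(a,b)$ is primitive I can complete it to an integral basis to get $g_1\in\OP{SL}(2,\ZZ)$ sending $(a,b)$ to $(1,0)$, then compose with the shear $(1,0)\mapsto(1,1)$ to obtain $g\in\OP{SL}(2,\ZZ)$ with $g\cdot(a,b)=(1,1)$. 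Letting $\phi$ be the image of $g$ under Lemma~\ref{autos} and applying Equation~\eqref{homologyaction} gives $\phi_*A=[\,g\cdot m(a,b),\;g\cdot n(a,b)\,]=[m,m,n,n]$. The degenerate cases are easier: if $A=0$ take $\phi=\id$, and if exactly one column vanishes (say the $(A_{14},A_{24})$ column, so $\gcd(A_{14},A_{24})=0$) take $(a,b)$ to be the primitive direction of the other column and proceed as above, noting $g\cdot(0,0)=(0,0)$.

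The only point that needs genuine care is the sign of $n$: once $m>0$ has pinned down the sign of $(a,b)$, the multiplier $n$ is determined, and the $\OP{SL}(2,\ZZ)$ action — being the \emph{same} matrix on both pairs — cannot alter the sign of $n$ relative to $m$. To bring $n$ to $+\gcd(A_{14},A_{24})$ I would use one extra automorphism, the involution
\[\phi_3\colon(x,y,z,t)\longmapsto(-x,-y,z,-t),\]
precomposing with it whenever $n<0$. The verifications are routine: $\phi_3$ is an affine map of $\RR^4$ normalising $\Gamma$ (it conjugates the glueing map $x\mapsto x+1,\ z\mapsto z+y$ to its inverse and sends each of the generating translations to itself or its inverse); moreover $\phi_3^*$ fixes $\gamma=dz-x\,dy$ while negating $dx$, $dy$ and $dt$, so $(\phi_3)_*$ fixes $E_{13}$ and $E_{23}$ and negates $E_{14}$ and $E_{24}$. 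Hence $(\phi_3)_*[A_{13},A_{23},A_{14},A_{24}]=[A_{13},A_{23},-A_{14},-A_{24}]$, which flips the sign of $n$ as required while leaving $m$ and $(a,b)$ untouched.

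The main obstacle is therefore not the arithmetic — which is Euclidean-algorithm bookkeeping over $\ZZ$ — but recognising that the coupled $\OP{SL}(2,\ZZ)$ action of Lemma~\ref{autos} is on its own insufficient and must be supplemented by $\phi_3$ to reconcile the two columns when they point in opposite directions. I would also note that $\phi_3$ is orientation-reversing on $K$, so combining this lemma with the equivariance~\eqref{gwinvariance} introduces a compensating sign; this is harmless here, both because the lemma asserts only the \emph{existence} of an automorphism $\phi\in\Aut(\Gamma)$ and because the quantities $m^2+n^2$ and $\sigma_2(\gcd(m,n))$ appearing in Theorem~\ref{ourmainthm} are insensitive to the signs of $m$ and $n$.
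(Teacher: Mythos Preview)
Your argument is essentially the paper's: use the diagonal $\OP{SL}(2,\ZZ)$-action of Lemma~\ref{autos} on the two linearly dependent columns $(A_{13},A_{23})$ and $(A_{14},A_{24})$ to send their common primitive direction to $(1,1)$. The paper's proof stops there.

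Where you diverge is in the sign analysis, and you are right to flag it. Once the $\OP{SL}(2,\ZZ)$-action has sent $(A_{13},A_{23})$ to $(m,m)$ with $m>0$, the second column is forced to $(n,n)$ with a \emph{determined} sign of $n$, and no further $\OP{SL}(2,\ZZ)$ element can flip that sign without also flipping the sign of $m$; your example $A=[1,1,-1,-1]$ makes this explicit. The paper's one-line proof does not address this, so if one reads $\gcd$ as nonnegative the statement is slightly stronger than what is actually proved there. Your remedy via the involution $\phi_3\colon(x,y,z,t)\mapsto(-x,-y,z,-t)$ is correct: it is a genuine Lie algebra automorphism (diagonal $(-1,-1,-1,1)$ on $(\mathbf{n}_1,\mathbf{n}_2,\mathbf{n}_3,\mathbf{n}_4)$), preserves $\Gamma$, and acts on $H_2$ exactly as you claim.

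That said, the paper does not actually need $n\geq 0$. From Lemma~\ref{reparawlog} onward it simply sets $m=A_{13}=A_{23}$ and $n=A_{14}=A_{24}$ as signed integers, and the subsequent analysis carries $\sgn(m)$ and $|m|$ explicitly (see Equation~\eqref{normform} and Section~\ref{enumtori}). So the discrepancy is one of stated precision rather than a gap that propagates; your $\phi_3$ is a clean way to make the lemma literally true, while the paper in effect works with $m,n$ determined only up to a common sign. Your closing remark about the orientation of $\phi_3$ and its interaction with Equation~\eqref{gwinvariance} is a legitimate caution, but as you note it is immaterial for Theorem~\ref{ourmainthm} since the answer is even in $n$.
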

\begin{proof}
By the action of $\OP{SL}(2,\ZZ)\subset\Aut(\Gamma)$ on $H_2(K;\ZZ)$ described in Equation \eqref{homologyaction} we can move the pair $(A_{13},A_{23})$ by some $\phi\in\Aut(\Gamma)$ until it coincides with $(\gcd(A_{13},A_{23}),\gcd(A_{13},A_{23}))$. Since $A_{13}A_{24}=A_{14}A_{23}$, the same $\phi$ will take $(A_{14},A_{24})$ to $(\gcd(A_{14},A_{24}),\gcd(A_{14},A_{24}))$.
\end{proof}
We now consider linear reparametrisations of the torus, that is $\OP{SL}(2,\ZZ)$ acting on the $(p,q)$-plane. We have this freedom when counting pseudoholomorphic tori because we specified the complex structure $j_{\tau}$ by giving a point $\tau\in\HH$ in the upper-half plane: we are therefore over-counting each torus infinitely often, once for each point in $\HH$ giving a diffeomorphic complex structure. The $\OP{SL}(2,\ZZ)$-reparametrisation precisely removes this ambiguity. The effect of $\Phi\in\OP{SL}(2,\ZZ)$ on $\sum_{i=1}^2\partial_ph_i\ \mathbf{n}_i$ and $\sum_{i=1}^2\partial_qh_i\ \mathbf{n}_i$ is to act on the right:
\[\left(\begin{array}{cc}
\partial_ph_1 & \partial_qh_1\\
\partial_ph_2 & \partial_qh_2
\end{array}\right)\Phi.\]

\begin{dfn}
We write $[h]_{\OP{SL}}$ for the $\OP{SL}(2,\ZZ)$-equivalence class of Lie algebra homomorphisms containing $h$. Note that the homology class $[h]$ depends only on $h$. The notion of $\OP{SL}(2,\ZZ)$-equivalence also makes sense for lattice homomorphisms $\rho\colon\ZZ^2\to\Gamma$ by extending them to Lie group homomorphisms and taking the logarithm, and we write $[\rho]_{\OP{SL}}$ for the equivalence class.
\end{dfn}

\begin{dfn}\label{reddfn}
We say a homomorphism $h\colon\RR^2\to\nn$ is {\em reduced} if $\partial_ph_1=\partial_ph_2=0$. Equivalently the matrix of derivatives of $h$ is
\[
\left(\begin{array}{cc}
0 & \partial_qh_1\\
0 & \partial_qh_2\\
\partial_ph_3 & \partial_qh_3\\
\partial_ph_4 & \partial_qh_4
\end{array}\right).
\]
We say that $h$ is \emph{fully reduced} if moreover
\[0\leq\partial_qh_3 <\partial_ph_3.\]
The notion of reduced homomorphism also makes sense for lattice homomorphisms $\rho\colon\ZZ^2\to\Gamma$ by extending them to Lie group homomorphisms and taking the logarithm.
\end{dfn}

\begin{lma}\label{reparawlog}
For a homology class $A\neq 0$ with $m=A_{13}=A_{23}$, $n=A_{14}=A_{24}$, any Lie algebra homomorphism $h\colon\RR^2\to\nn$ with $[h]=A$ is $\OP{SL}(2,\ZZ)$-equivalent to a reduced homomorphism with $\partial_qh_1=\partial_qh_2\neq 0$. If moreover $m\neq 0$ then $h$ is $\OP{SL}(2,\ZZ)$-equivalent to a {\em unique} fully reduced homomorphism.
\end{lma}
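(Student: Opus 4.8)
The plan is to encode $h$ by the $4\times 2$ matrix $M$ of its (constant) partial derivatives, $M_{i1}=\partial_p h_i$, $M_{i2}=\partial_q h_i$, where the entries in rows $1,2,3$ are integers (those in row $4$, the $z$-direction, do not enter the statement). Reparametrising by $\Phi\in\OP{SL}(2,\ZZ)$ replaces $h$ by $h\circ\Phi$, whose matrix is $M\Phi$; since $\Phi$ preserves $\ZZ^2$, the map $h\circ\Phi$ again descends to a torus --- the same one up to reparametrisation of the domain --- so $[h\circ\Phi]=[h]=A$. Let $N$ be the top $2\times 2$ block of $M$. By \eqref{abelian} (abelianness of $\RR^2$) we have $\det N=0$, so $\OP{rank} N\le 1$; rank zero would make every $A_{ij}$ with $i\in\{1,2\}$ vanish, i.e. $A=0$, contrary to hypothesis. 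Hence $\OP{rank} N=1$: both columns of $N$ lie on one rational line, spanned by a primitive integer vector $(a,b)^T$, and the columns are $s(a,b)^T$ and $r(a,b)^T$ with $(s,r)\neq(0,0)$ integers.

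First I would pin down $(a,b)$. Expanding the relevant minors of $M$ via Lemma \ref{homclass} gives $A_{13}=ac$, $A_{23}=bc$, $A_{14}=ac'$, $A_{24}=bc'$, where $c=s\,\partial_q h_3-r\,\partial_p h_3\in\ZZ$ and $c'=s\,\partial_q h_4-r\,\partial_p h_4\in\RR$, and $c,c'$ cannot both vanish or else $A=0$. The normalisation hypotheses $A_{13}=A_{23}$ and $A_{14}=A_{24}$ then give $(a-b)c=(a-b)c'=0$, hence $a=b$, and primitivity forces $(a,b)=\pm(1,1)$; replacing the generator (and $s,r$) by their negatives if necessary I take $(a,b)=(1,1)$. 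Now choose $\Phi\in\OP{SL}(2,\ZZ)$ with first column $(r/g,-s/g)^T$, where $g=\gcd(r,s)\geq 1$ (a primitive integer vector, so it completes to an element of $\OP{SL}(2,\ZZ)$); then the first column of $N\Phi$ is $0$, and the determinant-one relation pins its second column to $g(1,1)^T$. Thus $h\circ\Phi$ is reduced with $\partial_q h_1=\partial_q h_2=g\neq 0$, which is the first assertion.

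Now suppose $m\neq 0$ and start from such a reduced $h$. A direct check shows that $h\circ\Phi$ is again reduced only when the $(2,1)$-entry of $\Phi$ annihilates $g(1,1)^T$, i.e. only for $\Phi=\pm\begin{pmatrix}1 & \ell\\ 0 & 1\end{pmatrix}$, $\ell\in\ZZ$; these act on the third row of $M$ by $(\partial_p h_3,\partial_q h_3)\mapsto\pm(\partial_p h_3,\,\partial_q h_3+\ell\,\partial_p h_3)$. Since $m=A_{13}=-g\,\partial_p h_3$ (because $\partial_p h_1=0$, $\partial_q h_1=g$) and $m\neq 0$, we have $\partial_p h_3\neq 0$; applying $-I$ if needed we may assume $\partial_p h_3>0$, and then the unique $\ell$ from division with remainder puts $\partial_q h_3$ into $[0,\partial_p h_3)$, producing a fully reduced representative. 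Uniqueness follows from the same list of admissible $\Phi$: if $h$ and $h'=h\circ\Phi$ are both fully reduced, the sign must be $+$ (else $\partial_p h'_3=-\partial_p h_3<0$), and then $\partial_q h'_3=\partial_q h_3+\ell\,\partial_p h_3$ with both sides in $[0,\partial_p h_3)$ forces $\ell=0$, so $\Phi=\id$.

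The one step that is not pure bookkeeping is the identification $(a,b)=(1,1)$: this is exactly where the normalisation of Lemma \ref{wlog} (and, through it, the Pl\"ucker relation of Lemma \ref{pluck}) enters. Everything else is the Euclidean algorithm applied to integer vectors and to the lattice $\Gamma$ --- the analogue of the $\OP{SL}(2,\ZZ)$ normal-form reduction used for tori in Section \ref{back} --- together with the observation that reparametrisation preserves both the torus condition and the homology class.
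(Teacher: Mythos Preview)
Your proof is correct and follows essentially the same approach as the paper's: both use abelianness of $\RR^2$ to see that the top $2\times 2$ block has rank one, use the right $\OP{SL}(2,\ZZ)$-action to zero out its first column, deduce $\partial_q h_1=\partial_q h_2\neq 0$ from the homology relations $A_{13}=A_{23}$, $A_{14}=A_{24}$, and then use the residual upper-triangular subgroup and the Euclidean algorithm on $\partial_q h_3$ to obtain (and show uniqueness of) the fully reduced form. The only cosmetic difference is that you pin down the rank-one direction $(a,b)=(1,1)$ \emph{before} reducing, whereas the paper reduces first and reads off $\partial_q h_1=\partial_q h_2$ afterward; you also spell out the uniqueness argument that the paper leaves implicit.
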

\begin{proof}
Since $\RR^2$ is an abelian Lie algebra we have
\[0=[\partial_ph,\partial_qh]=\partial_{[p}h_2\partial_{q]}h_1\mathbf{n}_1.\]
If $h_1\not\equiv 0$ this implies that the top two rows
\[
\left(\begin{array}{cc}\partial_ph_1 & \partial_qh_1\\
\partial_ph_2 & \partial_qh_2
\end{array}\right)
\]
of the homomorphism $h$ are linearly dependent. Using the right $\OP{SL}(2,\ZZ)$-action we can ensure that $\partial_ph_1=\partial_ph_2=0$. Now we have
\begin{align*}
m=-\partial_qh_1\partial_ph_3&=-\partial_qh_2\partial_ph_3\\
n=-\partial_qh_1\partial_ph_4&=-\partial_qh_2\partial_ph_4
\end{align*}
and since one of these two quantities is nonzero we know that $\partial_qh_1=\partial_qh_2\neq 0$.

We have a residual right $\OP{SL}(2,\ZZ)$-action by matrices of the form
\[
\left(\begin{array}{cc}
\pm 1 & \star\\
0 & \pm 1
\end{array}\right)
\]
since these preserve the condition $\partial_ph_1=\partial_ph_2=0$. We know that $\partial_ph_3\neq 0$ because $m$ is assumed to be nonzero. Using the action of these matrices and this fact we can attain $\partial_ph_3>0$ and implement the Euclidean algorithm on $\partial_qh_3$ to ensure that $0\leq\partial_qh_3<\partial_ph_3$.
\end{proof}

Note that for a reduced homomorphism, $\partial_qh_1=\partial_qh_2$ divides the greatest common divisor $\gcd(m,n)$. It is easy to check that the most general reduced homomorphism giving the numbers $m$ and $n$ is
\begin{gather*}
\partial_qh_1=\partial_qh_2=-\sgn(m)d\\
\partial_ph_3=\frac{|m|}{d}\\
\partial_ph_4=-\frac{n}{\sgn(m)d}
\end{gather*}
for a positive divisor $d$ of $\gcd(m,n)$, where $\sgn(m)$ denotes the sign of $m$. In matrix form this looks like
\begin{equation}\label{normform}\left(\begin{array}{cc}
0 & -\sgn(m)d\\
0 & -\sgn(m)d\\
\frac{|m|}{d} & \partial_qh_3\\
-\frac{n}{\sgn(m)d} & \partial_qh_4
\end{array}\right).\end{equation}


\section{Pseudoholomorphic tori in $K$}\label{holtor}


We have seen (Proposition \ref{classiftori}) that if $\psi\in\WW$ then all $(j,J_{\psi})$-holomorphic tori in $K$ are quotients of maps $\RR^2\to N$ of the form
\begin{equation}\label{pseudotor}
\exp\left(h+C+\frac{1}{2}[h,C]\right)
\end{equation}
where $h\colon\RR^2\to\nn$ is a Lie algebra homomorphism and $C\in\nn$ is a constant. We know that if $h$ descends to a closed genus one curve then the derivatives
\[\partial_ph_i,\ \partial_qh_i\mbox{ for }i=1,2,3,\ \partial_ph_4+\frac{1}{2}\partial_ph_1\partial_ph_2,\ \partial_qh_4+\frac{1}{2}\partial_qh_1\partial_qh_2\]
are integers. The problem is now to enumerate these tori modulo the repara\-met\-ris\-ation action of $\OP{SL}(2,\ZZ)$ on $\RR^2$.

In light of Lemma \ref{wlog} we will restrict attention to non-zero homology classes with $A_{13}=A_{23}$ and $A_{14}=A_{24}$ without loss of generality and by reparametrising as in Lemma \ref{reparawlog} we can assume $\partial_ph_1=\partial_ph_2=0$ (i.e. $h$ is reduced). Using the usual decomposition $\nn=\aa\oplus\zz$ of the Lie algebra into the centre $\zz$ and its orthogonal complement, we can write this assumption as
\[\partial_ph_{\aa}=0.\]
By Equation \eqref{pqabderiv} this is equivalent to
\[\partial_ah_{\aa}=0.\]

The expected dimension of genus $g$ curves in $K$ which are $J_{\psi}$-holomorphic for some $\psi\in\WW$ is
\[4\cdot(1-g)+6g-6+\dim\WW\]
since $c_1(K)=0$. For tori ($g=1$) the expected dimension is $\dim\WW=1$. When a marked point is added we get
\[\OP{virdim}\mM_{1,1}(\WW,A)=3.\]
Our task in this section is to write down the moduli space and compute its dimension.

\subsection{Defining moduli spaces}

Let $A=[m,m,n,n]\in H_2(K,\ZZ)$ be a non-zero homology class and define the space of maps
\[
\mM(\WW,A)=\left\{(u,\tau,\psi)\ \middle|\ \begin{array}{l}\tau\in\HH,\ \psi\in\WW,\\
u\colon T^2\to K\mbox{ is }(j_{\tau},J_{\psi})\mbox{-holomorphic},\\
u_*[T^2]=A\end{array}\right\}\]
Let $\rho\colon\ZZ^2\to\Gamma$ be a homomorphism with $[\rho]=A$ and let $H\colon\RR^2\to N$ its unique homomorphic extension. Define
\[\mM_{\rho}(\WW)=\{(He^C,\tau,\psi)\in\mM(\WW,A)\mbox{ for some }C\in\nn\}\]
and note that
\[\mM(\WW,A)=\coprod_{[\rho]=A}\mM_{\rho}(\WW)\]
We also define
\begin{gather*}
\mM^{\mathrm{red}}(\WW,A)=\coprod_{[\rho]=A,\ \rho\mbox{ reduced}}\mM_{\rho}(\WW)\\
\mM^{\mathrm{ful}}(\WW,A)=\coprod_{[\rho]=A,\ \rho\mbox{ fully reduced}}\mM_{\rho}(\WW).
\end{gather*} The one-point moduli space is given by
\[\mM_{1,1}(\WW,A)=\mM(\WW,A)\times_{\OP{Aff}(T^2)} T^2\]
where $\OP{Aff}(T^2)=\OP{SL}(2,\ZZ)\ltimes T^2$ is the group of affine reparametrisations of~$T^2$. Here $\phi\in\OP{Aff}(T^2)$ acts by
\[\phi(u,\tau,\psi,z)=(u\circ\phi^{-1},\phi(\tau),\psi,\phi(z))\]
where the action of $\OP{SL}(2,\ZZ)$ on $\HH$ is the standard one. We will also write
\[\mM_{1,1}(\WW,[\rho]_{\OP{SL}}):=\mM_{\rho}(\WW)\times_{T^2}T^2.\]
\begin{lma}\label{noautos}
The action of $\OP{Aff}(T^2)$ on $\mM(\WW,A)\times T^2$ is free.
\end{lma}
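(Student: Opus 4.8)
The plan is to show that if $\phi \in \OP{Aff}(T^2)$ fixes a quadruple $(u,\tau,\psi,z) \in \mM(\WW,A)\times T^2$ then $\phi = \mathrm{id}$. Write $\phi = (\Phi, c)$ with $\Phi \in \OP{SL}(2,\ZZ)$ and $c \in T^2$, so that the condition $\phi(u,\tau,\psi,z) = (u,\tau,\psi,z)$ becomes three requirements: $\Phi$ fixes $\tau \in \HH$, the composite affine map $\phi$ fixes $z$, and $u \circ \phi^{-1} = u$. Since $\Phi$ acts by the standard $\OP{SL}(2,\ZZ)$-action on $\HH$, fixing $\tau$ forces $\Phi$ to lie in the (finite) stabiliser of $\tau$; for generic $\tau$ this stabiliser is $\{\pm I\}$, but even in the special cases ($\tau = i$ or $\tau = e^{i\pi/3}$) we will rule out the extra elements using the map equation, so there is no need to treat them separately.

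First I would use Proposition \ref{classiftori}: every $u$ occurring in $\mM(\WW,A)$ lifts to $\RR^2 \to N$ of the form $\tilde u = H e^C$ with $H$ the homomorphic extension of $\pi_1(u)$ and $C$ constant. The reparametrisation $\phi^{-1}$ lifts to an affine map $(p,q) \mapsto \Phi^{-1}(p,q) + v$ of $\RR^2$ (for a lift $v$ of $-c$), and the identity $u \circ \phi^{-1} = u$ lifts to $\tilde u(\Phi^{-1}(p,q)+v) = g \cdot \tilde u(p,q)$ for some deck transformation $g \in \Gamma$. Comparing the homomorphic parts of both sides: the left-hand side has fundamental-group data $\pi_1(u) \circ \Phi^{-1}$ (conjugated by $g$), while the right-hand side has $\pi_1(u)$. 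Passing to the abelianisation $\Gamma / [\Gamma,\Gamma] \cong \ZZ^3$, and then to the $\ZZ^2$-summand detected by $(x,y)$ (using the structure from Section \ref{coh}), the induced linear map must satisfy $\rho \circ \Phi^{-1} = \rho$ up to the trivial conjugation, where $\rho$ is the $2\times\!$-by-$2$ matrix of $(p,q)$-derivatives of $(h_1,h_2)$ together with the $(h_3,h_4)$ rows. Since $A \neq 0$ — and here one invokes that the homomorphism $h$ has a nondegenerate $2\times 4$ matrix because $\ell([\rho]) \neq 0$, as in Lemma \ref{reparawlog} — the matrix of $\rho$ has rank $2$, so $\rho \circ \Phi^{-1} = \rho$ forces $\Phi = I$.

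Once $\Phi = I$, the lifted equation becomes $\tilde u((p,q)+v) = g \cdot \tilde u(p,q)$, i.e. the translation by $v$ on the domain realises the deck action of $g$ on $N$. Since $\tilde u = H e^C$ with $H$ a nondegenerate homomorphism (injective on the relevant $\RR^2$, again because $A \neq 0$), translating the domain by a nonzero $v$ strictly translates the image, so $v$ must descend to $c = 0 \in T^2$, whence $g = e$ and $\phi = \mathrm{id}$. Finally, with $\Phi = I$ and $c = 0$ there is nothing left to check about the fixed point $z$. The main obstacle is the first bookkeeping step — carefully tracking how the reparametrisation $\Phi$ and the ambient deck transformation $g$ interact on lifts, and confirming that the $A \neq 0$ hypothesis genuinely forces the matrix of $h$ to have full rank $2$ so that $\rho \circ \Phi^{-1} = \rho \Rightarrow \Phi = I$; the special unimodular stabilisers at $\tau = i, e^{i\pi/3}$ are disposed of automatically by this rank argument rather than requiring a separate case analysis.
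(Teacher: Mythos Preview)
Your approach differs from the paper's and contains two genuine gaps.

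\textbf{Gap 1 (the rank argument).} You pass to the abelianisation $\Gamma/[\Gamma,\Gamma]\cong\ZZ^3$ precisely to make the conjugation by $g$ trivial and obtain $\bar\rho\circ\Phi^{-1}=\bar\rho$. But abelianising kills the $h_4$-row, since $\mathbf{n}_4$ spans $[\nn,\nn]$; what survives is the $3\times 2$ matrix with rows $h_1,h_2,h_3$, whose $2\times 2$ minors are $0$ (from $[\partial_ph,\partial_qh]=0$), $A_{13}$, and $A_{23}$. In the case $A_{13}=A_{23}=0$, $A_{14}$ or $A_{24}\neq 0$ (the paper's $m=0$ case), this matrix has rank~$1$, and $\bar\rho\Phi^{-1}=\bar\rho$ only forces $\Phi^{-1}$ to be a shear. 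You cannot simultaneously keep the $h_4$-row (to get rank~$2$) and discard the conjugation (to get a genuine equality $\rho\Phi^{-1}=\rho$); your sentence conflates the two. The repair is the constraint you mentioned and then dismissed: a nontrivial shear is parabolic and fixes no $\tau\in\HH$, so the $\tau$-condition is what actually finishes this case.

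\textbf{Gap 2 (the translation step).} With $\Phi=I$, the lifted equation $\tilde u((p,q)+v)=g\cdot\tilde u(p,q)$ is solved by $g=H(v)$ for \emph{every} $v\in\RR^2$, because $H$ is a homomorphism with abelian image. If $u$ is a genuine multiple cover then $H(v)\in\Gamma$ for many $v\notin\ZZ^2$, and the corresponding translations by $c\neq 0$ are exactly the deck transformations of $u$; they satisfy $u\circ\phi^{-1}=u$ on the nose. So ``translating by nonzero $v$ strictly translates the image'' does not yield $c=0$. The marked-point condition $\phi(z)=z+c=z$ is what forces $c=0$; your final sentence has the logic reversed.

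For comparison, the paper's proof is a short general argument that never invokes Proposition~\ref{classiftori}: factor $u=v\circ\pi$ through its underlying simple curve, use that $v$ has an open set of injective points to deduce $\pi\circ\phi=\pi$, hence $\phi$ is a deck transformation of the cover $\pi$ (in particular a translation), and then $\phi(z)=z$ gives $\phi=\id$. This sidesteps both the rank bookkeeping and the $m=0$ case entirely.
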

\begin{proof}
Suppose $(u,\tau,\psi,z)$ is fixed by $\phi$. Let $v$ denote the simple curve underlying $u$ and $\pi\colon \Sigma_{\tau}\to \Sigma_{\tau'}$ denote the holomorphic covering space such that $u=v\circ\pi$. The curve $v$ has an open set $V\subset\Sigma_{\tau'}$ of injective points. Let $x\in V$. An automorphism $\phi$ of $u$ satisfies $u(\phi x')=u(x')$ for any $x'\in\pi^{-1}(x)$. This implies that $\pi\circ\phi=\pi$ on $\pi^{-1}(V)$ and hence everywhere, so $\phi$ is a deck transformation (a translation). However $\phi(z)=z$, so $\phi=\id$.
\end{proof}
We first divide out by translations. The space $\mM(\WW,A)\times_{T^2}T^2$ has a residual $\OP{SL}(2,\ZZ)$-action. The following lemma is immediate from Lemma \ref{reparawlog}.
\begin{lma}\label{reduction}
Every $\OP{SL}(2,\ZZ)$-orbit of $\mM(\WW,A)\times_{T^2}T^2$ contains a point of
\[\mM^{\mathrm{red}}(\WW,A)\times_{T^2}T^2.\]
If $m\neq 0$ then every orbit contains a unique point of
\[\mM^{\mathrm{ful}}(\WW,A)\times_{T^2}T^2.\]
\qed
\end{lma}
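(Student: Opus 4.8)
The plan is to unwind each point of $\mM(\WW,A)\times_{T^2}T^2$ into the data to which Lemma~\ref{reparawlog} applies, and then track the residual $\OP{SL}(2,\ZZ)$-action. First I would recall that, after choosing a representative $(u,\tau,\psi,z)$ modulo translations and a lift $\tilde u\colon\RR^2\to N$, Proposition~\ref{classiftori} gives $\tilde u=\exp\bigl(h+C+\tfrac12[h,C]\bigr)$ with $h\colon\RR^2\to\nn$ a Lie algebra homomorphism and $C\in\nn$ a constant, while Lemma~\ref{homclass} gives $[h]=A$. Thus the point lies in $\mM_{\rho}(\WW)\times_{T^2}T^2$ for the lattice homomorphism $\rho$ whose log-extension is $h$. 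The residual $\OP{SL}(2,\ZZ)\subset\OP{Aff}(T^2)$ acts by $\Phi\cdot(u,\tau,\psi,z)=(u\circ\Phi^{-1},\Phi(\tau),\psi,\Phi(z))$, and on the lift this replaces $\tilde u$ by $\tilde u\circ\Phi^{-1}=\exp\bigl(h\circ\Phi^{-1}+C+\tfrac12[h\circ\Phi^{-1},C]\bigr)$; so $C$ is unchanged and $h$ is replaced by $h\circ\Phi^{-1}$, whose matrix of derivatives is obtained from that of $h$ by a right multiplication. Consequently $\Phi$ carries $\mM_{\rho}(\WW)\times_{T^2}T^2$ to $\mM_{\rho'}(\WW)\times_{T^2}T^2$, where $\rho'$ has log-extension $h\circ\Phi^{-1}$; since $\det\Phi=1$ all minors $A_{ij}$ are preserved, so $[\rho']=A$.

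For existence, apply Lemma~\ref{reparawlog} directly: there is a $\Phi$ for which $h\circ\Phi^{-1}$ is reduced, hence $\rho'$ is reduced and $\Phi\cdot(u,\tau,\psi,z)\in\mM^{\mathrm{red}}(\WW,A)\times_{T^2}T^2$; when $m\neq0$ one can moreover take $h\circ\Phi^{-1}$ fully reduced, so the orbit meets $\mM^{\mathrm{ful}}(\WW,A)\times_{T^2}T^2$.

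For uniqueness when $m\neq0$, suppose two points $(u_i,\tau_i,\psi_i,z_i)$, $i=1,2$, of a single $\OP{SL}(2,\ZZ)$-orbit both lie in $\mM^{\mathrm{ful}}(\WW,A)\times_{T^2}T^2$, with underlying fully reduced homomorphisms $h_1,h_2$ representing $A$. If $\Phi$ relates them then $h_2=h_1\circ\Phi^{-1}$, so $h_1$ and $h_2$ are $\OP{SL}(2,\ZZ)$-equivalent and the uniqueness clause of Lemma~\ref{reparawlog} forces $h_1=h_2$, i.e.\ $h_1\circ\Phi^{-1}=h_1$. Reading off the normal form~\eqref{normform}, when $m\neq0$ we have $\partial_p(h_1)_3=|m|/d\neq0$ and $\partial_q(h_1)_1=-\sgn(m)d\neq0$, so $h_1(\partial_p)$ and $h_1(\partial_q)$ are linearly independent in $\nn$; hence $h_1$ is injective as a linear map $\RR^2\to\nn$, and $h_1\circ\Phi^{-1}=h_1$ gives $\Phi=\id$. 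Therefore $(u_1,\tau_1,\psi_1,z_1)$ and $(u_2,\tau_2,\psi_2,z_2)$ already coincide in $\mM(\WW,A)\times_{T^2}T^2$, which is uniqueness.

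Since everything is already assembled in Lemmas~\ref{homclass} and~\ref{reparawlog}, I do not expect a serious obstacle. The one point requiring care is matching the residual $\OP{SL}(2,\ZZ)$-reparametrisation action on the moduli space with the right $\OP{SL}(2,\ZZ)$-action on homomorphisms of Section~\ref{KT} (a convention check, harmless up to replacing $\Phi$ by $\Phi^{-1}$), together with the injectivity observation above that pins down the connecting element in the uniqueness half.
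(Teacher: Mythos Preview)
Your proposal is correct and takes essentially the same approach as the paper: the paper's proof is literally ``immediate from Lemma~\ref{reparawlog}'' with a \qed, and you have simply unwound what that means, including the injectivity observation needed to pin down $\Phi=\id$ in the uniqueness half. The convention check you flag is harmless and your treatment of it is fine.
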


\subsection{Describing $\mM_{\rho}(\WW,A)$}

\begin{lma}\label{redeq}
Suppose that $h$ is a reduced Lie algebra homomorphism and $C\in\nn$ is a constant. The Cauchy-Riemann equations for
\[\exp\left(h+C+\frac{1}{2}[h,C]\right)\]
become
\begin{align}
\label{reducedCR1}\partial_bh_{\aa}&=\psi\partial_ah_{\zz}\\
\label{reducedCR2}\partial_bh_{\zz}&=[C_{\aa},\partial_bh_{\aa}].
\end{align}
\end{lma}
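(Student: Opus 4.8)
The plan is to substitute the explicit form of the pseudoholomorphic map~\eqref{pseudotor} into the Cauchy-Riemann equation~\eqref{pseudohol} and exploit the two-step nilpotency together with the reduced hypothesis $\partial_a h_{\aa}=0$ (equivalently $\partial_p h_1=\partial_p h_2=0$) to collapse the nonlinear terms. Concretely, write $w=\log\circ(He^C)=h+C+\tfrac12[h,C]$, so that $\partial_a w=\partial_a h+\tfrac12[\partial_a h,C]$ and $\partial_b w=\partial_b h+\tfrac12[\partial_b h,C]$, using that $C$ is constant and that all triple brackets vanish. Since $[\nn,\nn]\subset\zz$, the bracket terms are central, and one computes $[w,\partial_a w]=[h+C,\partial_a h]$ and similarly for $b$; here again I use that $[\nn,\nn]$ is central so that $[\tfrac12[h,C],\partial_a h]=0$ and $[h,[\partial_a h,C]]=0$.

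Next I would plug these into~\eqref{pseudohol}, namely $\psi(\partial_a w-\tfrac12[w,\partial_a w])=\partial_b w-\tfrac12[w,\partial_b w]$. Using the reduced condition $\partial_a h_{\aa}=0$, so $\partial_a h=\partial_a h_{\zz}$ lies in the centre, the bracket $[h+C,\partial_a h]$ simplifies: only the $\aa$-components of $h+C$ contribute (the centre brackets trivially with the centre), and in fact $[h,\partial_a h_{\zz}]=0$ since $\partial_a h_{\zz}\in\zz$, leaving $[w,\partial_a w]=0$. On the $b$-side, $\partial_b h$ has both $\aa$ and $\zz$ components, and $[w,\partial_b w]=[h_{\aa}+C_{\aa},\partial_b h_{\aa}]$; but $h_{\aa}$ is \emph{linear} in $(p,q)$ while the holomorphic-map equation forces a structural constraint. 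The terms $\tfrac12[\partial_a h,C]$ etc.\ land in $\psi[\nn,\nn]$ which I would track carefully against the decomposition $\nn=\bb\oplus[\nn,\nn]\oplus\psi[\nn,\nn]$ used in Proposition~\ref{classiftori}.

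Then I split the resulting equation along the decomposition $\nn=\aa\oplus\zz$ and compare $\aa$- and $\zz$-components. Because $\psi$ interchanges $\zz$ and $\zz^{\perp}$ (Lemma~\ref{twist}) and $\psi$ preserves the splitting $\nn=\aa\oplus\zz$ only in the sense that $\psi\zz=\zz^{\perp}\subseteq\aa\oplus$(something), I have to be careful about where $\psi\partial_a h_{\zz}$ lands; in the Kodaira-Thurston case $\zz=\tt\oplus[\nn,\nn]$ and $\aa=\bb\oplus\psi[\nn,\nn]$ with $\psi\zz=\zz^{\perp}=\aa$, so $\psi$ genuinely maps $\zz\to\aa$. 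Matching the $\aa$-part of~\eqref{pseudohol} gives $\partial_b h_{\aa}=\psi\partial_a h_{\zz}$ (the central corrections drop out because $[w,\partial_a w]$ vanished and the $\tfrac12[\partial_b h,C]$ term, being central, contributes only to the $\zz$-component), which is~\eqref{reducedCR1}. Matching the $\zz$-part, and noting that $\psi$ of the $\zz$-correction term on the left vanishes identically in $\zz$ while $\tfrac12[w,\partial_b w]=\tfrac12[h_{\aa}+C_{\aa},\partial_b h_{\aa}]$, together with $\tfrac12[\partial_b h,C]$, produces $\partial_b h_{\zz}=[C_{\aa},\partial_b h_{\aa}]$ after the linear terms $\tfrac12[h_{\aa},\partial_b h_{\aa}]$ and $\tfrac12[\partial_b h_{\aa},C_{\aa}]$ recombine (using antisymmetry and that $h_{\aa}$ being linear makes $[\partial_p h_{\aa},\partial_q h_{\aa}]=0$, the abelian relation~\eqref{abelian}); this is~\eqref{reducedCR2}.

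The main obstacle will be bookkeeping the central correction terms $\tfrac12[h,C]$, $\tfrac12[\partial h,C]$ and the mixed brackets $[h,\partial h]$ correctly across the three-way decomposition $\bb\oplus[\nn,\nn]\oplus\psi[\nn,\nn]$, and in particular verifying that the apparent nonlinear/linear terms in $h_{\zz}$ on the right-hand side of the $\zz$-component all cancel except for the single term $[C_{\aa},\partial_b h_{\aa}]$. The key simplifications that make this work are: (i) two-step nilpotency kills all triple brackets; (ii) the reduced condition $\partial_a h_{\aa}=0$ forces $\partial_a h\in\zz$ so $[w,\partial_a w]=0$ entirely; and (iii) the abelian relation~\eqref{abelian} for the homomorphism $h$ eliminates $[\partial_a h_{\aa},\partial_b h_{\aa}]$. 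Once these three facts are in hand the computation is a short and essentially forced manipulation.
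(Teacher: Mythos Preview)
Your approach is correct and essentially identical to the paper's: write $w=h+C+\tfrac12[h,C]$, use two-step nilpotency and the reduced hypothesis $\partial_a h_{\aa}=0$ to simplify the brackets in \eqref{pseudohol}, then split along $\nn=\aa\oplus\zz$ using $\psi\zz=\aa$. One small correction: the vanishing of $[h_{\aa},\partial_b h_{\aa}]$ does not come from the abelian relation~\eqref{abelian} but simply from the fact that $h_{\aa}$ is linear with $\partial_a h_{\aa}=0$, so $h_{\aa}=b\,\partial_b h_{\aa}$ and the bracket is $b[\partial_b h_{\aa},\partial_b h_{\aa}]=0$; also, bracket terms like $[\partial_b h,C]$ land in $[\nn,\nn]\subset\zz$, not in $\psi[\nn,\nn]$.
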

\begin{proof}
By Lemma \ref{twist}, $\psi(\aa)=\zz$. If $w=h+C+\frac{1}{2}[h,c]$ then we have
\begin{align*}
w_{\aa}&=h_{\aa}+C_{\aa}\\
w_{\zz}&=h_{\zz}+C_{\zz}+\frac{1}{2}[h_{\aa},C_{\aa}]
\end{align*}
Taking the $\aa$-part of the Cauchy-Riemann equation \eqref{pseudohol} and using the fact that $\partial_ph_{\aa}=\partial_ah_{\aa}=0$ ($h$ is reduced) gives Equation \eqref{reducedCR1}. Taking the $\zz$-part of \eqref{pseudohol} gives
\begin{align*}0&=\partial_bh_{\zz}+\frac{1}{2}[\partial_bh_{\aa},C_{\aa}]-\frac{1}{2}[h_{\aa}+C_{\aa},\partial_bh_{\aa}]\\
&=\partial_bh_{\zz}+[\partial_bh_{\aa},C_{\aa}].\end{align*}
This last step uses the fact  that $[h_{\aa},\partial_bh_{\aa}]=0$ which follows because $h_{\aa}=a\partial_ah_{\aa}+b\partial_bh_{\aa}=b\partial_bh_{\aa}$.
\end{proof}

\begin{lma}\label{enum}
Fix a Lie algebra homomorphism $h\colon\RR^2\to\nn$ with $h=\sum_{i=1}^4h_i\mathbf{n}_i$ such that $\partial_ph_{\aa}=0$ and $\exp(h(\ZZ^2))\subset\Gamma$. We will list all possibilities for $C$, $\tau$ and $\psi$ such that
\[h+C+\frac{1}{2}[h,C]\]
is the logarithm of a $J_{\psi}$-holomorphic torus:
\begin{itemize}
\item The constant $C_{\zz}$ is arbitrary,
\item The number $\tau_2$ satisfies
\[\tau_2=\frac{||\partial_qh_{\aa}||}{||\partial_ph_{\zz}||}\]
\item The complex structure $\psi$ is specified by the unique matrix $\Psi\in SO(2)$ which rotates
\begin{equation}\label{psiequn}\left(\begin{array}{c}
\partial_ph_3\\
\partial_ph_4
\end{array}\right)\mbox{ to }\frac{1}{\tau_2}\left(\begin{array}{c}
\partial_qh_1\\
\partial_qh_2
\end{array}\right)\end{equation}
\item The components of $C_{\aa}$ satisfy
\begin{equation}\label{consteq}C_2\partial_qh_1-C_1\partial_qh_2=\partial_qh_4-\tau_1\partial_ph_4,\end{equation}
\end{itemize}
Moreover,
\begin{itemize}
\item if $\partial_ph_3\neq 0$ we have
\[\tau_1=\partial_qh_3/\partial_ph_3\]
\item if $\partial_ph_3=0$ then necessarily $\partial_qh_3=0$ and $\tau_1$ is arbitrary.
\end{itemize}
\end{lma}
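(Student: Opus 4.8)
The plan is to start from the reduced Cauchy--Riemann equations \eqref{reducedCR1}--\eqref{reducedCR2} of Lemma \ref{redeq} and extract, one component at a time, the constraints they impose on $\tau$, $\psi$ and $C$. Since $h$ is reduced we have $\partial_a h_{\aa} = 0$, so $h_{\aa}(a,b) = b\,\partial_b h_{\aa}$; I will first note that Equation \eqref{reducedCR2} forces $\partial_b h_{\zz}$ to be the constant $[C_{\aa},\partial_b h_{\aa}]$, while $\partial_a h_{\zz}$ is unconstrained by \eqref{reducedCR2}. Using the coordinate change \eqref{pqabderiv} I will translate $\partial_a, \partial_b$ back into $\partial_p, \partial_q$: because $h$ is linear in $(p,q)$, all the $\partial_p h_i, \partial_q h_i$ are constants, and reducedness says $\partial_p h_1 = \partial_p h_2 = 0$. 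So $\partial_a h_{\aa} = \partial_p h_{\aa} = 0$ automatically, $\partial_b h_{\aa} = \partial_q h_{\aa}/\tau_2$, and similarly $\partial_a h_{\zz} = \partial_p h_{\zz}$, $\partial_b h_{\zz} = (\partial_q h_{\zz} - \tau_1 \partial_p h_{\zz})/\tau_2$.

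Next I would feed these into \eqref{reducedCR1}, which reads $\partial_b h_{\aa} = \psi\,\partial_a h_{\zz}$. Since $\psi$ is a $g$-orthogonal complex structure and (by Lemma \ref{twist}) $\psi(\zz) = \zz^{\perp} = \aa$, the right-hand side is $\psi$ applied to the vector $\partial_p h_{\zz} \in \zz$, and the left-hand side is $\tau_2^{-1}\partial_q h_{\aa} \in \aa$. Taking norms and using that $\psi$ is an isometry gives $\lVert \partial_q h_{\aa}\rVert/\tau_2 = \lVert \partial_p h_{\zz}\rVert$, i.e. the stated formula $\tau_2 = \lVert \partial_q h_{\aa}\rVert/\lVert \partial_p h_{\zz}\rVert$; and the same equation says $\psi$ must send the unit vector along $\partial_p h_{\zz}$ to the unit vector along $\partial_q h_{\aa}$, which in the $\mathbf{n}_3,\mathbf{n}_4$ versus $\mathbf{n}_1,\mathbf{n}_2$ bases is exactly the requirement \eqref{psiequn} that $\Psi \in SO(2)$ rotate $(\partial_p h_3, \partial_p h_4)^{\mathsf T}$ to $\tau_2^{-1}(\partial_q h_1, \partial_q h_2)^{\mathsf T}$. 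I should check this determines $\Psi$ uniquely: the two vectors have equal (nonzero) length by the $\tau_2$ computation, so there is a unique element of $SO(2)$ carrying one to the other. The case distinction on $\partial_p h_3$ then comes from looking at the remaining scalar equation packaged inside \eqref{reducedCR1} together with \eqref{reducedCR2}: writing out the $\zz$-component $\partial_b h_{\zz} = [C_{\aa},\partial_b h_{\aa}]$ in $(p,q)$-coordinates and matching the $\mathbf{n}_3$ (i.e. $\tt$) and $\mathbf{n}_4$ (i.e. $[\nn,\nn]$) parts separately gives, on one hand, the equation \eqref{consteq} for the bracket term $C_{\aa}$ (note $[\mathbf{n}_1,\mathbf{n}_2] = \mathbf{n}_4$ up to sign, so $[C_{\aa},\partial_q h_{\aa}]$ has $\mathbf{n}_4$-component $C_2 \partial_q h_1 - C_1 \partial_q h_2$), and on the other hand, for the $\tt$-component of $h_{\zz}$ where the bracket term drops out, the relation $\partial_q h_3 - \tau_1 \partial_p h_3 = 0$; this last yields $\tau_1 = \partial_q h_3/\partial_p h_3$ when $\partial_p h_3 \neq 0$, and when $\partial_p h_3 = 0$ it forces $\partial_q h_3 = 0$ and leaves $\tau_1$ free.

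Finally I would observe that $C_{\zz}$ appears nowhere in \eqref{reducedCR1}--\eqref{reducedCR2} — the $\zz$-component of the logarithm only enters through its derivatives, and adding a constant in $\zz$ changes nothing — so $C_{\zz}$ is arbitrary, while $C_{\aa}$ is constrained only through the single linear equation \eqref{consteq}, which is the $\mathbf{n}_4$-component of \eqref{reducedCR2} once the value of $\tau_1$ (hence of $\partial_q h_4 - \tau_1 \partial_p h_4$) is known. Conversely, any $(C,\tau,\psi)$ satisfying all the listed conditions makes \eqref{reducedCR1}--\eqref{reducedCR2} hold, so by Lemma \ref{redeq} the map is $J_{\psi}$-holomorphic; thus the list is exhaustive. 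I expect the main obstacle to be purely bookkeeping: keeping the identifications $\aa \leftrightarrow \mathrm{span}(\mathbf{n}_1,\mathbf{n}_2)$, $\tt \leftrightarrow \mathrm{span}(\mathbf{n}_3)$, $[\nn,\nn] \leftrightarrow \mathrm{span}(\mathbf{n}_4)$ straight while converting between $(a,b)$ and $(p,q)$ derivatives and matching the structure constants of $\nn$ correctly, so that the $\mathbf{n}_4$-component of $[C_{\aa},\partial_q h_{\aa}]$ comes out with the right sign to reproduce \eqref{consteq}, and verifying that the $\tt$-component equation really is the only source of the constraint on $\tau_1$.
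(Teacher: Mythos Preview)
Your proposal is correct and follows essentially the same route as the paper: start from the reduced equations \eqref{reducedCR1}--\eqref{reducedCR2}, take norms in \eqref{reducedCR1} to read off $\tau_2$ and then $\Psi$, and split \eqref{reducedCR2} into its $\mathbf{n}_3$- and $\mathbf{n}_4$-components to obtain the $\tau_1$-dichotomy and the constraint \eqref{consteq} on $C_{\aa}$, noting that $C_{\zz}$ never appears. Your write-up is slightly more explicit than the paper's about the coordinate change \eqref{pqabderiv} and about the converse direction, but the argument is the same.
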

\begin{proof}
The constant $C_{\zz}$ is arbitrary because it does not enter into Equations \eqref{reducedCR1} and \eqref{reducedCR2}. Taking the norm of Equation \eqref{reducedCR1} gives
\[||\partial_bh_{\aa}||=||\partial_ah_{\zz}||\]
because $\psi$ is orthogonal. Using Equation \eqref{pqabderiv} we have
\[||\partial_bh_{\aa}||=\left|\left|\frac{\partial_qh_{\aa}-\tau_1\partial_ph_{\aa}}{\tau_2}\right|\right|=\frac{||\partial_qh_{\aa}||}{\tau_2}\]
since $\partial_ph_{\aa}=\partial_ah_{\aa}=0$. This gives the formula for $\tau_2$. Having fixed $\tau_2$, Equation \eqref{reducedCR1} becomes precisely the desired condition on $\psi$.

The equation for $C_1$ and $C_2$ is simply the $\mathbf{n}_4$-component of Equation \eqref{reducedCR2}. Finally, the $\mathbf{n}_3$ component is
\[\partial_bh_3=0\]
since $\mathbf{n}_3$ is orthogonal to the commutator subalgebra. Using the fact \eqref{pqabderiv} that
\[\partial_bh_3=\frac{\partial_qh_3-\tau_1\partial_ph_3}{\tau_2}\]
we obtain the required dichotomy for $\tau_1$ when $\partial_ph_3$ is either zero or nonzero.
\end{proof}

Note that, unlike $C_{\aa}$, the quantity $C_2\partial_qh_1-C_1\partial_qh_2$ is invariant under translations of the $(p,q)$-plane.
\begin{cor}\label{generaleqn}
Let $\rho\colon\ZZ^2\to\Gamma$ be a homomorphism, $H$ its homomorphic extension to $\RR^2\to N$ and $h=\log H$. When $\partial_ph_3\neq 0$ the moduli space $\mM_{\rho}(\WW)$ consists of maps $u$ of the form
\[He^{C_0+D}\]
where
\begin{equation}\label{c0def}C_0=\frac{\partial_qh_4-\tau_1\partial_ph_4}{\partial_qh_1}\ {\bf n}_2\end{equation}
and $D$ is any element $\nn$ satisfying $[D,h]=0$.
\end{cor}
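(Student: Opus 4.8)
The plan is to read off $\mathcal{M}_\rho(\WW)$ directly from Lemma~\ref{enum}, organising the data of $(C,\tau,\psi)$ into the two pieces described in the statement. First I would invoke Lemma~\ref{enum} with this particular $h$ (note $\partial_ph_{\aa}=0$ since $\rho$ is reduced): it tells us that $\psi$ is \emph{uniquely} determined, that $\tau_2$ is uniquely determined, and — crucially, because we are assuming $\partial_ph_3\neq 0$ — that $\tau_1=\partial_qh_3/\partial_ph_3$ is also uniquely determined. So the only freedom in a point of $\mathcal{M}_\rho(\WW)$ is the choice of the constant $C\in\nn$, subject to the single linear constraint \eqref{consteq} on $C_{\aa}$ (with $C_{\zz}$ completely free).

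Next I would analyse that constraint. Since $h$ is reduced with $m=A_{13}=A_{23}$, $n=A_{14}=A_{24}$, Lemma~\ref{reparawlog} gives $\partial_qh_1=\partial_qh_2\neq 0$; write $d=-\sgn(m)\partial_qh_1$ in the notation of \eqref{normform}. Then \eqref{consteq} reads $\partial_qh_1(C_2-C_1)=\partial_qh_4-\tau_1\partial_ph_4$, i.e. it pins down only the difference $C_2-C_1$ and leaves $C_1+C_2$ (say) free. I would then exhibit the particular solution $C_0=\tfrac{\partial_qh_4-\tau_1\partial_ph_4}{\partial_qh_1}\,\mathbf{n}_2$ of \eqref{c0def}: indeed for $C=C_0$ we have $C_1=0$, $C_2=(\partial_qh_4-\tau_1\partial_ph_4)/\partial_qh_1$, which satisfies \eqref{consteq} on the nose. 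Hence the general solution of \eqref{consteq} is $C=C_0+D$ where $D$ ranges over the affine-linear solutions of the homogeneous equation $D_2\partial_qh_1-D_1\partial_qh_2=0$, i.e. (using $\partial_qh_1=\partial_qh_2\neq 0$) over $D$ with $D_1=D_2$, together with arbitrary $D_{\zz}$.

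It remains to identify this solution space $\{D : D_1=D_2,\ D_{\zz}\ \text{arbitrary}\}$ with $\{D\in\nn : [D,h]=0\}$. This is the one genuinely computational point, but it is short: using the basis $\mathbf{n}_1=\partial_y,\dots,\mathbf{n}_4=\partial_z$ the only nontrivial bracket is $[\mathbf{n}_2,\mathbf{n}_1]=\mathbf{n}_4$ (from the nilpotent structure of $\nn$), so for $D=\sum D_i\mathbf{n}_i$ and $h=\sum h_i\mathbf{n}_i$ one computes $[D,h]=(D_2 h_1-D_1 h_2)\mathbf{n}_4$ as a function on $\RR^2$. Since $h_{\aa}=b\,\partial_bh_{\aa}$ (as noted in the proof of Lemma~\ref{redeq}) and $\partial_bh_1=\partial_bh_2\neq 0$, the function $D_2h_1-D_1h_2 = b(D_2\partial_bh_1-D_1\partial_bh_2)$ vanishes identically iff $D_2\partial_bh_1=D_1\partial_bh_2$ iff $D_1=D_2$. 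Thus $[D,h]=0$ is exactly the homogeneous version of \eqref{consteq}, and the remaining components $D_{\zz}$ (which lie in the centre) are automatically unconstrained. Combining: every element of $\mathcal{M}_\rho(\WW)$ is $He^{C_0+D}$ with $[D,h]=0$, and conversely each such choice gives a point of the moduli space by Lemma~\ref{enum}. The only subtlety to flag is the BCH bookkeeping $\exp(h)\exp(C)=\exp(h+C+\tfrac12[h,C])$ used implicitly to write the map as $He^C$ rather than as $\exp$ of the combined logarithm, but this is precisely Proposition~\ref{classiftori}, so no additional work is needed there.
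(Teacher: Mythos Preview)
Your proposal is correct and follows essentially the same route as the paper's own proof: invoke Lemma~\ref{enum} to see that $\psi$ and $\tau$ are determined by $\rho$ (using $\partial_ph_3\neq 0$ for $\tau_1$), exhibit $C_0$ as a particular solution of the inhomogeneous constraint~\eqref{consteq}, and identify the homogeneous equation $D_2\partial_qh_1-D_1\partial_qh_2=0$ with $[D,h]=0$. You simply supply more of the intermediate computations (the explicit bracket, the specialisation $\partial_qh_1=\partial_qh_2$) than the paper writes out; note that the equivalence $D_2\partial_qh_1-D_1\partial_qh_2=0\Leftrightarrow[D,h]=0$ already holds for any reduced $h$ without using $\partial_qh_1=\partial_qh_2$, so that extra assumption, while valid in context, is not needed for this step.
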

\begin{proof}
By Equation \eqref{consteq} we know that if $He^C$ is in the moduli space $\mM_{\rho}(\WW)$ then $C$ must solve
\[C_2\partial_qh_1-C_1\partial_qh_2=\partial_qh_4-\tau_1\partial_ph_4,\]
The vector $C_0\in\nn$ given in Equation \ref{c0def} is a particular solution of this inhomogeneous equation. Therefore $D=C-C_0$ satisfies the corresponding homogeneous equation,
\[D_2\partial_qh_1-D_1\partial_qh_2=0\]
which is equivalent to $[D,h]=0$. Note that $\psi$ and $\tau$ are determined by the homomorphism $\rho$.
\end{proof}

\begin{lma}\label{translwlog}
Suppose $\rho\colon\ZZ^2\to\Gamma$ is reduced and that $(He^{C_0+D},\tau,\psi)\in\mM_{\rho}(\WW)$ as in Corollary \ref{generaleqn}. By reparametrising $(p,q)\mapsto (p+\delta_p,q+\delta_q)$ we can assume that
\[D\cdot\partial_qh_{\aa}=D\cdot\partial_ph_{\zz}=0.\]
In particular we can ensure that $D_{\zz}\perp\partial_ph_{\zz}$ and $D_{\aa}=0$.
\end{lma}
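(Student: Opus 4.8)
The plan is to track how the constant $D$ transforms when the domain torus is reparametrised by a translation, and then to choose the translation so as to kill the two inner products in the statement. First I would note that a shift $(p,q)\mapsto(p+\delta_p,q+\delta_q)$ is an affine reparametrisation, so it changes neither $\tau$ nor $\psi$, and, being homotopic to the identity, it does not change the homomorphism $\rho$; in particular $C_0$ (which depends only on $\tau_1$) is unchanged, and the reparametrised map still lies in $\mM_{\rho}(\WW)$, hence is again of the form $He^{C_0+D'}$ with $[D',h]=0$.

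Next I would compute $D'$ explicitly. Since $H=\exp(h)$ is a homomorphism out of the abelian group $\RR^2$, we have $H(p+\delta_p,q+\delta_q)=H(p,q)\,H(\delta)$, so the reparametrised map equals $H(p,q)\,H(\delta)e^{C_0+D}$; applying the two-step Baker--Campbell--Hausdorff formula to $e^{h(\delta)}e^{C_0+D}$ gives
\[D'=D+h(\delta)+\tfrac12\big[h(\delta),\,C_0+D\big].\]
Because $\nn$ is two-step nilpotent, the bracket term lies in $[\nn,\nn]\subset\zz$; and because $h$ is reduced, the $\aa$-component of $h(\delta)$ equals $\delta_q\,\partial_qh_{\aa}$, depending on $\delta_q$ only. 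Splitting into components,
\[D'_{\aa}=D_{\aa}+\delta_q\,\partial_qh_{\aa},\qquad D'_{\zz}=D_{\zz}+\delta_p\,\partial_ph_{\zz}+\delta_q\,\partial_qh_{\zz}+\tfrac12\big[h(\delta),\,C_0+D\big].\]

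Now I would fix $\delta_q$, then $\delta_p$. The constraint $[D,h]=0$ forces $D_{\aa}$ to be a scalar multiple of $\mathbf{n}_1+\mathbf{n}_2$, hence (using $\partial_qh_1=\partial_qh_2\neq0$, which holds for a reduced $h$ with $[h]\neq0$, e.g.\ by the normal form \eqref{normform}) a multiple of $\partial_qh_{\aa}$; thus there is a unique $\delta_q$ making $D'_{\aa}=0$, and for that $\delta_q$ one has $D'\cdot\partial_qh_{\aa}=0$ a fortiori. With $\delta_q$ now fixed, $D'_{\zz}$ is an affine function of $\delta_p$ whose derivative in the $\partial_ph_{\zz}$-direction is $||\partial_ph_{\zz}||^2$, which is nonzero because $\partial_ph_3\neq0$ (we are in the situation of Corollary \ref{generaleqn}); hence there is a unique $\delta_p$ with $D'_{\zz}\cdot\partial_ph_{\zz}=0$, that is $D'\cdot\partial_ph_{\zz}=0$. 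The remaining claims are then immediate: $D'_{\aa}=0$, and $D'_{\zz}\perp\partial_ph_{\zz}$ is just a reformulation of the second equation.

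The only subtlety is the order of operations: the Baker--Campbell--Hausdorff correction appearing in $D'_{\zz}$ depends on $\delta_q$, so one must solve the $\aa$-equation for $\delta_q$ before solving the $\zz$-equation for $\delta_p$. That this triangular system has a solution comes down to $\partial_qh_{\aa}\neq0$ and $\partial_ph_{\zz}\neq0$, both of which hold here; everything else is the routine bracket bookkeeping recorded above. I expect this last solvability point to be the only step needing genuine attention.
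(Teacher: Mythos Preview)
Your proof is correct and follows essentially the same route as the paper's. The only cosmetic difference is the order of the two deductions: the paper first solves $D'\cdot\partial_qh_{\aa}=0$ and $D''\cdot\partial_ph_{\zz}=0$ and only afterwards combines $D_{\aa}\perp\partial_qh_{\aa}$ with $[D,h]=0$ to conclude $D_{\aa}=0$, whereas you first use $[D,h]=0$ to see $D_{\aa}\parallel\partial_qh_{\aa}$ and then kill $D'_{\aa}$ outright; your group-level BCH computation of $D'$ and the paper's logarithm-level computation agree (note $[h(\delta),D]=0$), so the transformation law and the solvability argument are identical.
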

\begin{proof}
The logarithm of $He^{C_0+D}$ is $h+C_0+D+\frac{1}{2}[h,C_0]$ since $[h,D]=0$. We can absorb the reparametrisation of $h$ by $(p,q)\mapsto(p+\delta_p,q+\delta_q)$ into the constant $D$, which becomes
\[D+\delta_p\left(\partial_ph+\frac{1}{2}[\partial_ph,C_0]\right)+\delta_q\left(\partial_qh+\frac{1}{2}[\partial_qh,C_0]\right)\]
First pick $\delta_q$ to solve the equation
\[\delta_q\left(\partial_qh+\frac{1}{2}[\partial_qh,C_0]\right)\cdot\partial_qh_{\aa}+D\cdot\partial_qh_{\aa}=0\]
This is possible since $\left(\partial_qh+\frac{1}{2}[\partial_qh,C_0]\right)\cdot\partial_qh_{\aa}=||\partial_qh_{\aa}||^2\neq 0$ and it ensures that $D'\cdot\partial_qh_{\aa}=0$, where $D'$ is the new constant after the reparametrisation by $(0,\delta_q)$. Next, remember that since $\rho$ is reduced $\partial_ph_{\aa}=0$. Let $\delta_p$ be the solution of
\[\delta_q\left(\partial_ph+\frac{1}{2}[\partial_ph,C_0]\right)\cdot\partial_ph_{\zz}+D'\cdot\partial_ph_{\zz}=0\]
which is possible since $\left(\partial_ph+\frac{1}{2}[\partial_ph,C_0]\right)\cdot\partial_ph_{\zz}=||\partial_{\zz}h_{\zz}||^2\neq 0$. Repara\-met\-ri\-sing $D'$ by $(\delta_p,0)$ gives $D''$ satisfying $D''\cdot\partial_ph_{\zz}=0$. Note that $D''\cdot\partial_qh_{\aa}$ is still zero because $\partial_ph_{\aa}=0$ so this condition is not affected by the second reparametrisation.

Now assume that we have reparametrised and relabelled so that $D$ satisfies the equations
\[D\cdot\partial_qh_{\aa}=D\cdot\partial_ph_{\zz}=0.\]
To see that this gives $D_{\aa}=0$ note that $[D,h]=[D_{\aa},h_{\aa}]=[D_{\aa},q\partial_qh_{\aa}]=0$ (because $h$ is reduced). This is a linear equation for a vector $D_{\aa}\in\aa$ and $\dim(\aa)=2$, so $D_{\aa}$ is a multiple of $\partial_qh_{\aa}$. But we have just seen that $D_{\aa}\perp\partial_qh_{\aa}$. The second equation implies $D_{\zz}\perp\partial_ph_{\zz}$.
\end{proof}

\subsection{Describing $\mM_{1,1}(\WW,A)$: fully reduced case}

We know by definition that $\mM_{1,1}(\WW,A)=\mM(\WW,A)\times_{\OP{Aff}(T^2)}T^2$ and $\mM(\WW,A)=\coprod_{[\rho]=A}\mM_{\rho}(\WW)$. By Lemma \ref{reduction} we know that if $m=A_{13}\neq 0$ then there is a unique fully reduced $\rho$ in the $\OP{SL}(2,\ZZ)$-orbit of homomorphisms representing the class $[A]$ and hence
\begin{align*}
\mM_{1,1}(\WW,A)&=\mM^{\mathrm{ful}}(\WW,A)\times_{T^2}T^2\\
&=\coprod_{\rho\mbox{ reduced}}\mM_{\rho}(\WW)\times_{T^2}T^2
\end{align*}
By Lemma \ref{translwlog} we know that a local slice of the moduli space $\mM_{\rho}(\WW)\times_{T^2}T^2$ (when $\rho$ is fully reduced) is given by
\[(He^{C_0+D(\lambda)},\tau,\psi,z)\]
where $z\in T^2$ is arbitrary and
\begin{equation}\label{dlambda}
D(\lambda)=\lambda\left(\begin{array}{c}
0\\
0\\
\partial_ph_4\\
-\partial_ph_3
\end{array}\right)\end{equation}
for $\lambda\in\RR$. In fact this descends to a global description of the moduli space when we observe that $D(\lambda)$ is central and hence
\[He^{C_0+D(\lambda)}=e^{D(\lambda)}He^{C_0}\]
which gives the same pseudoholomorphic torus if and only if $e^{D(\lambda)}\in\Gamma$. This occurs precisely when $\lambda$ is a multiple of $\frac{1}{\gcd(\partial_ph_3,\partial_ph_4)}$.

\begin{cor}\label{mod-tang}
If $A=\sum A_{ij}E_{ij}\in H_2(K;\ZZ)$ is a homology class with $A_{13}\neq 0$ then the moduli space $\mM_{1,1}(\WW,A)$ is smooth. It is a union of components labelled by fully reduced homomorphisms $\rho\colon\ZZ^2\to\Gamma$, each component consisting of equivalence classes
\[\left[\left(He^{C_0+D(\lambda)},z\right)\right],\ \lambda\in\left[0,\frac{1}{\gcd(\partial_ph_3,\partial_ph_4)}\right],\ z\in T^2\]
where
\begin{itemize}
\item $H\colon\RR^2\to N$ is the unique homomorphic extension of $\rho$ and $h$ is its logarithm,
\item $D(\lambda)$ is defined by Equation \eqref{dlambda},
\item $C_0$ is defined by Equation \eqref{c0def}.
\item the equivalence relation equates $(u,z)$ with $(u\circ\phi^{-1},\phi(z))$ for a translation $\phi\colon T^2\to T^2$ of the domain such that $u\circ\phi^{-1}=u$.
\end{itemize}
In particular, the tangent space at $(u,\tau,\psi,z)$ comprises the vectors
\[(D(\lambda),V)\in\zz\oplus T_z\Sigma_{\tau},\ \lambda\in\RR.\]
The moduli space has dimension three (the expected dimension).
\end{cor}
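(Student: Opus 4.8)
The plan is to assemble the statement from the three preceding lemmas and corollary, promoting the local slice description of Lemma \ref{translwlog} to a global statement by exploiting that the residual gauge freedom is generated by a central element of $\nn$. First I would recall that since $A_{13} = m \neq 0$, Lemma \ref{reduction} tells us every $\OP{SL}(2,\ZZ)$-orbit contains a \emph{unique} fully reduced $\rho$, so that $\mM_{1,1}(\WW,A) = \coprod_{\rho\ \mathrm{fully\ reduced}} \mM_{\rho}(\WW)\times_{T^2}T^2$ and the components are indeed indexed as claimed. Then, for a fixed fully reduced $\rho$, Corollary \ref{generaleqn} identifies $\mM_{\rho}(\WW)$ with maps $He^{C_0+D}$ where $[D,h]=0$; Lemma \ref{translwlog} uses the two-parameter translation freedom of $(p,q)$ to normalise $D$ so that $D_{\aa}=0$ and $D_{\zz}\perp\partial_ph_{\zz}$. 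Since $\partial_ph_3\neq 0$ (fully reduced forces this), the space of $D_{\zz}\in\zz$ orthogonal to $\partial_ph_{\zz}$ is one-dimensional and spanned precisely by the vector $(0,0,\partial_ph_4,-\partial_ph_3)$ appearing in Equation \eqref{dlambda}; together with $\tau$ and $\psi$ being pinned down by Lemma \ref{enum}, this gives the local slice $(He^{C_0+D(\lambda)},\tau,\psi,z)$.

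The step that needs genuine care is passing from this local slice to the stated \emph{global} parametrisation with $\lambda$ ranging over a half-open interval of length $1/\gcd(\partial_ph_3,\partial_ph_4)$. The key observation, which I would state and verify explicitly, is that $D(\lambda)$ lies in the centre $\zz$ of $\nn$, so $h$ commutes with it and hence $He^{C_0+D(\lambda)} = e^{D(\lambda)}He^{C_0}$ (using $[h,D(\lambda)]=0$ and the Baker--Campbell--Hausdorff formula to pull the central factor out on the left). Two values $\lambda,\lambda'$ then give the same map $T^2\to K$ precisely when $e^{(\lambda-\lambda')D(1)}$ descends to the identity of $K$, i.e.\ lies in $\Gamma$. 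Since $D(1)=(0,0,\partial_ph_4,-\partial_ph_3)$ and the lattice direction in $\zz$ is generated (after clearing the common factor) by $1/\gcd(\partial_ph_3,\partial_ph_4)$ times $D(1)$, this identifies the $\lambda$-circle with $\RR/\tfrac{1}{\gcd(\partial_ph_3,\partial_ph_4)}\ZZ$. Combined with the freely-acting residual $T^2$ (translations of the domain, via Lemma \ref{noautos}), this gives the asserted component structure; the equivalence relation in the last bullet is exactly the definition of $\mM_{1,1}$ restricted to each component, and Lemma \ref{noautos} guarantees that the only $\phi$ with $u\circ\phi^{-1}=u$ are those that also fix $z$, so no further collapsing occurs.

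Finally, for the tangent space and dimension count: differentiating the parametrisation at $(u,\tau,\psi,z)$, the $z$-direction contributes $T_z\Sigma_\tau$ (two dimensions), and the $\lambda$-direction contributes the line $\RR\cdot D(1)\subset\zz$ — note that $\tau$ and $\psi$ are \emph{locally constant} on each component because Lemma \ref{enum} determines them algebraically from $\rho$, so they contribute nothing to the tangent space. Hence $T_{(u,\tau,\psi,z)}\mM_{1,1}(\WW,A) = \{(D(\lambda),V) : \lambda\in\RR,\ V\in T_z\Sigma_\tau\} \cong \zz\text{-line}\oplus T_z\Sigma_\tau$, which is three-dimensional, matching the expected dimension $\OP{virdim}\mM_{1,1}(\WW,A)=3$ computed earlier. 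Smoothness is immediate since the parametrisation exhibits each component as the quotient of $(\RR/\tfrac{1}{\gcd(\partial_ph_3,\partial_ph_4)}\ZZ)\times T^2$ by a free translation action, hence a smooth manifold. The one point I expect to be slightly delicate in the write-up is being precise about the lattice $\Gamma\cap\exp(\zz)$ in the $D(1)$ direction and why the clearing of $\gcd(\partial_ph_3,\partial_ph_4)$ is exactly what is needed; everything else is bookkeeping on the earlier lemmas.
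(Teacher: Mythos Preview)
Your approach is essentially identical to the paper's: the discussion preceding the corollary makes exactly your central observation that $D(\lambda)\in\zz$ allows one to write $He^{C_0+D(\lambda)}=e^{D(\lambda)}He^{C_0}$, and then identifies the $\lambda$-period via $e^{D(\lambda)}\in\Gamma$. One correction: your reading of Lemma~\ref{noautos} is backwards---it says that $\phi$ fixing \emph{both} $u$ and $z$ must be the identity, not that $\phi$ fixing $u$ must fix $z$; in fact when $u$ is multiply covered the deck group acts nontrivially on $z$, so the equivalence classes $[(u,z)]$ genuinely have size $|\Aut(u)|>1$ (the paper notes this immediately after the corollary), and your claim that ``no further collapsing occurs'' is incorrect---though this does not affect smoothness or the dimension count, since the deck group still acts freely on the marked point.
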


Arguing as in the proof of Lemma \ref{noautos} we see that if $u$ is a torus and $v$ is the underlying simple torus, so that $u=v\circ\pi$ for some holomorphic covering map $\pi$, then the size of the equivalence class $[(u,z)]$ is the order of the deck transformation group of this cover.

\subsection{Describing $\mM_{1,1}(\WW,A)$: the general case}
Suppose now that $m=A_{13}=0$. Since $A\neq 0$ we know that $n\neq 0$.

By Lemma \ref{reduction} we know that any $\rho$ with $[\rho]=A$ can be conjugated via the action of $\OP{SL}(2,\ZZ)$ to a reduced homomorphism. For a reduced $\rho$, the subgroup $\OP{Stab}(\rho)\subset\OP{Aff}(T^2)$ of affine reparametrisations of $\RR^2$ fixing $\rho$ is generated by the subgroup $T^2$ of translations and the group isomorphic to $\ZZ\times(\ZZ/2\ZZ)\subset\OP{SL}(2,\ZZ)$ consisting of matrices
\[\left(\begin{array}{cc}
\pm 1 & \star\\
0 & \pm 1
\end{array}\right).\]
Lemma \ref{reduction} can be rephrased as
\[\mM_{1,1}(\WW,A)=\mM^{\mathrm{red}}(\WW,A)\times_{\Stab(\rho)}T^2\]
Each component diffeomorphic to
\[\mM_{\rho}(\WW,A)\times_{T^2}T^2\]
for some reduced $\rho$.
\begin{lma}\label{mod-tang2}
In the case $m=0$, $n\neq 0$ the moduli space is a smooth manifold of dimension four and the tangent space at $(u=He^C,\tau,\psi,z)$ comprises triples $(D_3{\bf n}_3,\eta_1,V)\in\zz\oplus\OP{Re}(T_{\tau}\HH)\oplus T_z\Sigma_{\tau}$.
\end{lma}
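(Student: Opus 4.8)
The plan is to follow the same strategy as in the fully reduced case (Corollary \ref{mod-tang}), but to keep track of the extra freedom that appears once $\partial_ph_3=0$. By Lemma \ref{reparawlog} we may assume $\rho$ is reduced, so $h$ is reduced and, because $m=0$ while $n\neq 0$, Equation \eqref{normform} forces $\partial_ph_3=\partial_qh_3=0$ and $\partial_ph_4\neq 0$. First I would invoke Lemma \ref{enum}: the complex structure $\psi$ and the number $\tau_2=\|\partial_qh_{\aa}\|/\|\partial_ph_{\zz}\|$ are still rigidly determined by $\rho$, but now $\tau_1$ is \emph{unconstrained} (this is the source of the $\OP{Re}(T_\tau\HH)$ summand), and Equation \eqref{consteq} still pins down the combination $C_2\partial_qh_1-C_1\partial_qh_2$ as an affine function of $\tau_1$. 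Thus $\mM_\rho(\WW)$ is parametrised by $(\tau_1,C_{\zz},D)$ where $D$ ranges over solutions of $[D,h]=0$, exactly as in Corollary \ref{generaleqn} with the proviso that the particular solution $C_0$ now depends on $\tau_1$.

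Next I would cut down the redundancy. As in Lemma \ref{translwlog}, translations of the $(p,q)$-plane absorb the $\aa$-component of $D$ and the component of $D_{\zz}$ along $\partial_ph_{\zz}$; since $h$ is reduced the kernel of $\OP{ad}(h)$ inside $\zz$ is all of $\zz$ (everything in $\zz$ is central) while inside $\aa$ it is spanned by $\partial_qh_{\aa}$, and the translation argument kills precisely that $\aa$-direction. What survives is: the component of $D_{\zz}$ in $\pp=[\nn,\nn]=\langle{\bf n}_4\rangle$? No --- here one must be careful, since $\partial_ph_3=0$ the vector that played the role of $D(\lambda)$ in Equation \eqref{dlambda} degenerates. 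Instead the residual central freedom is a one-dimensional space, which I would identify as the ${\bf n}_3$-direction (the $\tt$-factor of $\zz$), giving the $D_3{\bf n}_3$ summand; the component along ${\bf n}_4$ is removed either by a translation or because $e^{D}\in\Gamma$ identifications make it periodic and it can be absorbed. Then dividing by the translation action $\times_{T^2}T^2$ reintroduces a free $T_z\Sigma_\tau$. Counting: one dimension from $\tau_1$, one from $D_3$, two from $z$, for a total of four, matching $\dim\WW+2=\dim B+2$ with the extra $\tau_1$-direction. Smoothness is immediate because, just as in Corollary \ref{mod-tang}, the moduli space is cut out by \emph{linear} equations on the finite-dimensional parameter space $(\tau,C,D)$ with constant coefficients, so the solution set is a linear subspace (hence a manifold), and the residual $\Stab(\rho)$-action is free by the argument of Lemma \ref{noautos}.

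The main obstacle will be bookkeeping the degenerate case $\partial_ph_3=0$ carefully: in the fully reduced case the central direction $D(\lambda)=\lambda(0,0,\partial_ph_4,-\partial_ph_3)$ was essentially $\partial_ph_4\,{\bf n}_3$ once $\partial_ph_3=0$, but one must check that (i) this is genuinely a nontrivial modulus (it is not killed by translations, because translating by $(\delta_p,0)$ moves $D$ by $\delta_p\partial_ph+\tfrac12\delta_p[\partial_ph,C_0]$, whose ${\bf n}_3$-component is $\delta_p\partial_ph_3=0$), and (ii) the ${\bf n}_4$-component of $D$ really is eliminated — here I would use the translation in the $q$-direction together with the fact, from Lemma \ref{translwlog}, that $(\partial_qh+\tfrac12[\partial_qh,C_0])\cdot\partial_ph_{\zz}=\|\partial_ph_{\zz}\|^2\neq 0$ even when $\partial_ph_3=0$, since $\partial_ph_{\zz}=\partial_ph_4\,{\bf n}_4\neq0$. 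Once those two points are settled, the identification of the tangent space with $\{(D_3{\bf n}_3,\eta_1,V)\}$ is just reading off the parametrisation, and the dimension count gives four.
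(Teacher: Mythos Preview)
Your approach is essentially the same as the paper's: parametrise by $(\tau_1,C_0,D)$ via Lemma~\ref{enum} and Corollary~\ref{generaleqn}, then use the translation action of Lemma~\ref{translwlog} to normalise $D$ so that $D_{\aa}=0$ and $D_{\zz}\perp\partial_ph_{\zz}$, which (since $\partial_ph_{\zz}=\partial_ph_4\,\mathbf{n}_4$ when $m=0$) forces $D=D_3\mathbf{n}_3$ and leaves $\tau_1,D_3,p,q$ as local coordinates. Your extra checks that $D_3$ is genuinely unabsorbed by translations (because $\partial_ph_3=\partial_qh_3=0$) are correct and more explicit than the paper.

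There is one bookkeeping slip in your final paragraph: it is the $p$-translation, not the $q$-translation, that eliminates the $\mathbf{n}_4$-component of $D$, and the relevant nondegeneracy from Lemma~\ref{translwlog} is
\[
\left(\partial_ph+\tfrac12[\partial_ph,C_0]\right)\cdot\partial_ph_{\zz}=\|\partial_ph_{\zz}\|^2\neq 0,
\]
with $\partial_p$ throughout (note $[\partial_ph,C_0]=0$ since $h$ is reduced). The $q$-translation is already committed to killing $D_{\aa}$. Once you swap $p$ for $q$ there, the argument goes through exactly as you outline.
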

\begin{proof}
Once again we let $C_0=\frac{\partial_qh_4-\tau_1\partial_ph_4}{\partial_qh_1}$ but remember that in this moduli space $\tau_1$ is allowed to vary so $C_0$ is arbitrary. As in the proof of Lemma \ref{translwlog} we may still reparametrise so that $D_{\aa}=0$ and $D_{\zz}\cdot \partial_ph_{\zz}=0$. Since $m=0$, $\partial_ph_3=0$ and hence $D=D_3{\bf n}_3$. Therefore $\tau_1$, $D_3$, $p$ and $q$ are local coordinates on the moduli space.
\end{proof}

\section{Automorphisms}\label{auto}

We observed in Lemma \ref{noautos} that a holomorphic map $u$ from a torus with one marked point $z$ has no nontrivial holomorphic automorphisms. If we consider only unmarked curves then the automorphism group, $\Aut(u)$, of a multiply-covered curve $u=v\circ\pi$ is precisely the deck transformation group of the holomorphic covering $\pi$. For the one-point moduli space the size of this automorphism group becomes the size of the equivalence classes in Corollary \ref{mod-tang}. Therefore we must now compute $|\Aut(u)|$.

\begin{lma}\label{autosize}
If $A=[m,m,n,n]\in H_2(K;\ZZ)$ is a non-zero homology class and $u=e^he^{C_0+D}$ is a holomorphic torus as in Corollary \ref{generaleqn}, where $h\colon\RR^2\to\nn$ is a Lie algebra homomorphism of the form given in Equation \eqref{normform} then
\[|\Aut(u)|=\gcd(\gcd(m,n),(mk+n\ell)/d)\]
where $k=\partial_qh_4+\frac{\partial_qh_1\partial_qh_2}{2\gcd(\partial_qh_1,\partial_qh_2)}$ and $\ell=\partial_qh_3$.
\end{lma}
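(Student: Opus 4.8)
The strategy is to identify $\Aut(u)$ with a finite subgroup of the translation lattice and count it explicitly. By the discussion preceding the lemma (Lemma \ref{noautos} and its consequences), $\Aut(u)$ is the deck group of the holomorphic covering $\pi$ underlying $u = v\circ\pi$, which is a subgroup of $T^2$ acting by translations on the domain $\Sigma_\tau$. A translation by $(\delta_p,\delta_q)\in\RR^2$ descends to an automorphism of $u$ precisely when precomposing the map $\RR^2\to N$, namely $(p,q)\mapsto H(p,q)e^{C_0+D}$, with the shift $(p,q)\mapsto(p+\delta_p,q+\delta_q)$ yields a map differing from the original only by left multiplication by an element of $\Gamma$ (so that it descends to the \emph{same} map $T^2\to K$, not merely a homotopic one). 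Since $He^{C_0+D}$ has logarithm $h+C_0+D+\tfrac12[h,C_0]$ (as $[h,D]=0$) and $h$ is linear, the reparametrised logarithm is $h + (\delta_p\partial_p h+\delta_q\partial_q h) + C_0+D+\tfrac12[h,C_0]+(\text{correction terms in }[\nn,\nn])$. The correction terms are central, so the condition becomes: the difference $v_\delta := \delta_p\,\partial_ph + \delta_q\,\partial_qh + (\tfrac12\text{-bracket corrections})$ exponentiates, together with the ambient group element, to something in $\Gamma$.

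The key computation is to unwind this condition coordinate by coordinate using the matrix form \eqref{normform} of $h$ and the explicit exponential \eqref{expcurv}. First I would handle the $\aa$-directions: the $\mathbf{n}_1,\mathbf{n}_2$-components of $v_\delta$ are $(\delta_p\partial_ph_1+\delta_q\partial_qh_1, \delta_p\partial_ph_2+\delta_q\partial_qh_2)$; since $h$ is reduced ($\partial_ph_1=\partial_ph_2=0$) these are $\delta_q\partial_qh_1=\delta_q\partial_qh_2$, which must be an integer, so $\delta_q \in \tfrac{1}{\gcd(\partial_qh_1,\partial_qh_2)}\ZZ = \tfrac{1}{d}\ZZ$ (recall $\partial_qh_1=\partial_qh_2=-\sgn(m)d$, so this gcd is $d$). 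Next the $\mathbf{n}_3$-direction: we need $\delta_p\partial_ph_3+\delta_q\partial_qh_3 \in\ZZ$. Finally the $\mathbf{n}_4$-direction requires that the $(1,3)$-entry of the product of exponential matrices lie in $\ZZ$; writing it out this amounts to $\delta_p\partial_ph_4+\delta_q\partial_qh_4 + \tfrac12(\text{terms from the }h_1h_2\text{ entry and from }C_0)$ being an integer. Solving the $\mathbf{n}_3$-equation for $\delta_p$ in terms of $\delta_q$ and substituting into the $\mathbf{n}_4$-equation collapses the system to a single congruence condition on the integer $r := d\,\delta_q$: something of the shape $r\cdot\big(\text{integer involving }\partial_ph_3,\partial_ph_4,\partial_qh_3,\partial_qh_4\big) \equiv 0 \pmod{\gcd(m,n)}$, and the number of solutions $r$ modulo $\gcd(m,n)$ is exactly $\gcd\!\big(\gcd(m,n), (\text{that integer})\big)$. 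Matching $\partial_ph_3=|m|/d$, $\partial_ph_4=-n/(\sgn(m)d)$ with the claimed $k,\ell$ and simplifying identifies that integer as $(mk+n\ell)/d$, giving the stated formula. Throughout one must check the $\delta_p$ component is then automatically forced to be rational with the right denominator and contributes no further constraint once $\delta_q$ is fixed --- this follows because $\partial_ph_3=|m|/d$ and $\partial_ph_4=-n/(\sgn(m)d)$ are coprime up to the gcd structure already accounted for.

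The main obstacle I anticipate is bookkeeping the half-integer corrections correctly: the $\mathbf{n}_4$-component of the logarithm of $He^{C_0+D}$ picks up a $\tfrac12 h_1h_2$ contribution from the exponential formula \eqref{expcurv} and a $\tfrac12[h,C_0]$ contribution, and when one further shifts $(p,q)$ there are cross terms $\tfrac12[\partial_ph,C_0]$ and $\tfrac12[\partial_qh,C_0]$. Getting the precise form of $k = \partial_qh_4 + \frac{\partial_qh_1\partial_qh_2}{2\gcd(\partial_qh_1,\partial_qh_2)}$ --- in particular the denominator $2\gcd(\partial_qh_1,\partial_qh_2)=2d$ rather than just $2$ --- requires tracking that the relevant half-integer is $\tfrac12(\partial_qh_1)(\partial_qh_2)\delta_q^2$ evaluated at $\delta_q\in\tfrac1d\ZZ$, i.e. that the integrality condition is tested at the generator $\delta_q=1/d$ of the allowed lattice, which divides the naive $\tfrac12\partial_qh_1\partial_qh_2$ by $d$. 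Once the algebra is organised so that everything is expressed in terms of the single parameter $r=d\delta_q\in\ZZ$, the count is an elementary statement about the number of residues $r \bmod \gcd(m,n)$ annihilated by multiplication by a fixed integer, which is standard.
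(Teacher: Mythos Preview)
Your overall strategy is sound and, once corrected, converges to the paper's computation, but the execution sketch contains a real gap and an unnecessary complication.

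\textbf{The unnecessary complication.} Since $H$ is a homomorphism, $\tilde u(p+\delta_p,q+\delta_q)=H(\delta_p,\delta_q)\,H(p,q)\,e^{C_0+D}=H(\delta_p,\delta_q)\,\tilde u(p,q)$. So the automorphism condition is simply $H(\delta_p,\delta_q)\in\Gamma$; the constants $C_0$ and $D$ drop out entirely, and the cross-terms $\tfrac12[\partial h,C_0]$ you worry about never appear. The integrality conditions reduce to $h_1(\delta),h_2(\delta),h_3(\delta)\in\ZZ$ and $h_4(\delta)+\tfrac12 h_1(\delta)h_2(\delta)\in\ZZ$, exactly as you write after that.

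\textbf{The gap.} Your claim that ``$\delta_p$ contributes no further constraint once $\delta_q$ is fixed'' is false, and relatedly your assertion that $r=d\delta_q$ ranges over $\ZZ/\gcd(m,n)\ZZ$ is wrong: since $\delta_q\in[0,1)$, one has $r\in\{0,\dots,d-1\}$. After solving the $\mathbf n_3$-condition for $\delta_p$ you introduce a second integer parameter $s$ (with $|m|/d$ values), and the $\mathbf n_4$-condition becomes $ns\equiv rP'\pmod{|m|}$ with $P'=(mk+n\ell)/d$. For each admissible $r$ there are $\gcd(m,n)/d$ solutions $s$, not one. It so happens that (number of admissible $r$ in $\{0,\dots,d-1\}$)$\times(\gcd(m,n)/d)=\gcd(\gcd(m,n),P')$, so your final formula is correct, but the reasoning as written would not establish it.

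\textbf{Comparison with the paper.} The paper avoids this bookkeeping by observing that $\pi_1(u)(\ZZ^2)$ lands in an abelian subgroup $\pi^{-1}(\iota(\ZZ))\cong\ZZ^3$ of $\Gamma$, writing the resulting $3\times 2$ integer matrix, and invoking Smith normal form: $|\Aut(u)|$ is the gcd of its $2\times 2$ minors. Your integrality conditions package into exactly such a $3\times 2$ map $L\colon\RR^2\to\RR^3$ with $|\Aut(u)|=[L^{-1}(\ZZ^3):\ZZ^2]$, so the cleanest repair of your argument is to stop at that point and compute the minors directly rather than attempting the congruence count by hand. The two routes are dual (saturation in the target versus preimage of the lattice in the domain) and yield the same matrix up to harmless row operations.
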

\begin{proof}
Suppose that $\pi_1(u)\colon\ZZ^2\to\Gamma$ is the (reduced) homomorphism on fundamental groups. We will write $\pi\colon\Gamma\to\Gamma/Z(\Gamma)\cong\ZZ^2$ for the projection and
\[\pi_1(u)(1,0)=\left(\begin{array}{ccc}
1 & b_1 & d_1\\
0 & 1 & a_1\\
0 & 0 & 1
\end{array}\right)\oplus c_1,\ \pi_1(u)(0,1)=\left(\begin{array}{ccc}
1 & b_2 & d_2\\
0 & 1 & a_2\\
0 & 0 & 1
\end{array}\right)\oplus c_2.\]
Since $\pi_1(u)$ is reduced $a_1=b_1=0$ and $a_2\neq 0$ which implies that $\pi\circ \pi_1(u)$ lands in the cyclic subgroup $\iota\colon\ZZ\hookrightarrow\ZZ^2$ generated by $(\bar{b}_2,\bar{a}_2)$, where
\[\bar{a}_2=\frac{a_2}{\gcd(a_2,b_2)}\quad\mbox{ and }\quad\bar{b}_2=\frac{b_2}{\gcd(a_2,b_2)}.\]
If $v$ is the simple torus underlying $u$ then $\pi\circ\pi_1(v)$ also lands in this subgroup and hence the image of $\pi_1(v)$ is contained in the preimage $\pi^{-1}\iota(\ZZ)$. We have
\[\pi^{-1}(\iota(\ZZ))=\left\{\left.\left(\begin{array}{ccc}1 & q\bar{b}_2 & \ZZ\\
0 & 1 & q\bar{a}_2\\
0 & 0 & 1 \end{array}\right)\oplus \ZZ\right| q\in\ZZ\right\}\cong\ZZ^3\]
The isomorphism with $\ZZ^3$ is
\[(q,r,s)\mapsto\left(\begin{array}{ccc}1 & q\bar{b}_2 & s+\frac{q(q-1)\bar{a}_2\bar{b}_2}{2}\\
0 & 1 & q\bar{a}_2\\
0 & 0 & 1 \end{array}\right)\oplus r\]
Since $\pi_1(u)\colon\ZZ^2\to\pi^{-1}(\iota(\ZZ))\cong\ZZ^3$ is given by the matrix
\[\left(\begin{array}{cc}
0 & \gcd(a_2,b_2)\\
c_1 & c_2\\
d_1 & d_2-\frac{\gcd(a_2,b_2)(\gcd(a_2,b_2)-1)}{2}\bar{a}_2\bar{b}_2
\end{array}\right)\]
the maximal sublattice $\Lambda$ of $\ZZ^3$ (and hence of $\Gamma$) containing $\iota(\ZZ^2)$ as a finite-index sublattice has
\[[\Lambda:\iota(\ZZ^2)]=\gcd(M_1,M_2,M_3)\]
where $M_1,M_2,M_3$ are the two-by-two minors of this matrix. One can see this by putting the matrix into Smith normal form.

In terms of the integer derivatives of the underlying Lie algebra homomorphism,
\begin{align*}
c_1&=\partial_ph_3 & d_1&=\partial_ph_4\\
a_2&=\partial_qh_1 & b_2&=\partial_qh_2\\
c_2&=\partial_qh_3 & d_2&=\partial_qh_4+\frac{1}{2}\partial_qh_1\partial_qh_2
\end{align*}
so $|\Aut(u)|$ is equal to the greatest common divisor of
\[\gcd(\partial_ph_3,\partial_ph_4)\gcd(\partial_qh_1,\partial_qh_2)\]
and
\[\partial_ph_3\left(\partial_qh_4+\frac{\partial_qh_1\partial_qh_2}{2\gcd(\partial_qh_1,\partial_qh_2)}\right)-\partial_ph_4\partial_qh_3.\]
When $h$ has the form given in Equation \eqref{normform}, this expression reduces to
\begin{gather*}\gcd\left(\gcd\left(\frac{m}{d},\frac{m}{d}\right)d,\frac{|m|}{d}k+\frac{n}{\OP{sgn}(m)d}\ell\right)\\
=\gcd(\gcd(m,n),(mk+n\ell)/d)\end{gather*}
where $k=\partial_qh_4+\frac{\partial_qh_1\partial_qh_2}{2\gcd(\partial_qh_1,\partial_qh_2)}$ and $\ell=\partial_qh_3$.
\end{proof}


\section{The linearised Cauchy-Riemann operator}\label{lintheory}
The aim of this section is to prove the following theorem
\begin{thm}\label{cleanthm}
If $A\in H_2(K;\ZZ)$ is a non-zero homology class with $A_{13}=A_{23}$, $A_{14}=A_{24}$ then $\mM_{1,1}(\WW,A)$ is clean (see Definition \ref{clean}). Moreover if $h\colon\RR^2\to\nn$ is a reduced homomorphism (i.e. $\partial_ph_{\aa}=0$) with $[h]=A$ and $\partial_ph_3=0$ (which happens if and only if $A_{13}=0$) then there is a nonvanishing section of the obstruction bundle over the moduli space $\mM_{1,1}(\WW,[h]_{\OP{SL}})$ and hence these holomorphic tori do not contribute to the Gromov-Witten invariant in the class $A$.
\end{thm}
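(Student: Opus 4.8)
The plan is to prove Theorem~\ref{cleanthm} in two halves: first cleanness of $\mM_{1,1}(\WW,A)$ (for all $A$ with $A_{13}=A_{23}$, $A_{14}=A_{24}$), and then the existence of a nonvanishing section of the obstruction bundle in the degenerate case $\partial_ph_3=0$.

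\textbf{Cleanness.} By Definition~\ref{clean} we must show that for each $(j_\tau,J_\psi)$-holomorphic torus $u=He^{C_0+D}$ in the moduli space, the zero set $\dbar^{-1}(0)$ is a smooth manifold whose tangent space at $u$ equals $\ker(D\dbar)$. We already know from Corollary~\ref{mod-tang} and Lemma~\ref{mod-tang2} that $\mM_{1,1}(\WW,A)$ is a smooth manifold (of dimension $3$ when $A_{13}\neq 0$, dimension $4$ when $A_{13}=0$) with explicitly described tangent spaces, so one inclusion is automatic: every tangent vector to the moduli space lies in $\ker(D\dbar)$. The content is the reverse inclusion, i.e. that $\ker(D\dbar)$ is no bigger than the tangent space we have computed. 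The strategy is to write down $D\dbar$ explicitly by linearising the Cauchy-Riemann system~\eqref{pseudohol} at the solution $w=h+C_0+D+\tfrac12[h,C_0]$, using the decomposition $\nn=\bb\oplus\pp\oplus\qq$ (as in Proposition~\ref{classiftori}) together with the variation of $(\tau,\psi)$ along $\WW$. An infinitesimal deformation is a triple $(\alpha,\eta,\xi)$ with $\alpha\in T_\psi\WW$, $\eta\in T_\tau\HH$, and $\xi\colon\RR^2\to\nn$ the variation of $w$ (equivariant, hence bounded, since it descends to $T^2$). Linearising~\eqref{harmo} and separating components exactly as in the proof of Proposition~\ref{classiftori}: the $\bb$- and $\qq$-components give a \emph{linear, constant-coefficient, elliptic} system for $\xi_\bb$ and $\xi_\qq$ (the one-dimensionality of $\qq=\psi[\nn,\nn]$ again kills all couplings and nonlinearities), so the Hopf maximum principle \cite[Theorem 3.1]{GT} forces $\xi_{\bb\oplus\qq}$ to be constant; then the $\pp$-component becomes harmonic with bounded solution and $\xi_\pp$ is constant too. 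This shows $\ker(D\dbar)$ consists precisely of constant Lie-algebra deformations together with the reparametrisation/$\WW$-directions, which is exactly the tangent space from Corollary~\ref{mod-tang}/Lemma~\ref{mod-tang2}. Hence $\dbar$ vanishes cleanly and $\coker(D\dbar)=\Obs$ is a genuine vector bundle over the moduli space.

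\textbf{The obstruction section when $\partial_ph_3=0$.} Here $m=A_{13}=0$, $n\neq 0$; by Lemma~\ref{mod-tang2} the moduli space is four-dimensional with tangent vectors $(D_3\mathbf{n}_3,\eta_1,V)$, and the virtual dimension being $3$ means $\Obs$ is a real line bundle. I would produce an explicit nonvanishing section by exploiting the \emph{second} real parameter $\eta_2=\OP{Im}(T_\tau\HH)$ which is \emph{not} a genuine deformation direction: varying $\tau_2$ away from the value $\tau_2=\|\partial_qh_\aa\|/\|\partial_ph_\zz\|$ forced by Lemma~\ref{enum} produces a nonzero element of $L^p\Omega^{0,1}_{j_\tau,J_\psi}(T^2,u^*TK)$ which is orthogonal to the image of $D\dbar$. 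Concretely, applying $D\dbar$ to the would-be deformation $(0,i\,\partial_{\tau_2},0)$ (changing only $\tau_2$) yields, via Equation~\eqref{reducedCR1} and the computation in Lemma~\ref{enum}, a section proportional to the $\aa$-component $\partial_bh_\aa$, which is a fixed nonzero constant vector in $\zz$ (not in the image of $D\dbar$, since that image was characterised above). One then checks this construction is consistent over the whole component $\mM_{1,1}(\WW,[h]_{\OP{SL}})$ — the relevant data ($\partial_qh_\aa$, $\partial_ph_\zz$, $\psi$) are constant along the component by Lemma~\ref{enum} and Corollary~\ref{generaleqn} — so the section is globally defined and nowhere zero, whence $e(\Obs)=0$ in cohomology and, by the clean Gromov-Witten formula $\GW_{1,1}(\WW,A)=\ev_*\OP{PD}(e(\Obs))$, these tori contribute nothing.

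\textbf{Main obstacle.} The delicate point is the cleanness half: one must linearise the full (quadratically nonlinear, coupled) system~\eqref{pseudohol}--\eqref{harmo} at an arbitrary solution $w=h+C_0+D+\tfrac12[h,C_0]$ and verify that, after projecting onto $\bb$, $\pp$, $\qq$, the cross-terms involving the non-constant part of $w$ still assemble into a constant-coefficient elliptic operator to which the Hopf maximum principle applies — the same mechanism as in Proposition~\ref{classiftori}, but now one must also track the extra variation parameters $(\alpha,\eta)$ and confirm they enter only as inhomogeneous constant terms, not spoiling ellipticity. Once that bookkeeping is done, identifying $\ker(D\dbar)$ with the tangent space computed in Corollary~\ref{mod-tang}/Lemma~\ref{mod-tang2} is immediate, and the obstruction-section construction is then a short explicit calculation.
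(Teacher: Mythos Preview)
Your cleanness argument is broadly on the right track (cross-differentiate, apply the maximum principle componentwise as in Proposition~\ref{classiftori}) but there is a genuine gap. Showing that $\xi_{\bb}$, $\xi_{\qq}$ and $\xi_{\pp}-\tfrac12[w,\xi_{\aa}]$ are constant uses only the \emph{second-order} consequence of $D(\alpha,\eta,\xi)=0$; it tells you nothing about $\alpha$ and $\eta_2$. You still have to substitute the constant $\xi$ back into the \emph{first-order} linearised equation~\eqref{lineq} and extract constraints on $(\alpha,\eta)$. The paper does this in Lemma~\ref{kernli}: the $\aa$-component gives $\alpha(\partial_ah_{\zz})+\tfrac{\eta_2}{\tau_2}\partial_bh_{\aa}=0$, and a short eigenvalue argument (using that $\psi\partial_ah_{\zz}=\partial_bh_{\aa}$) forces $\alpha=0$ and $\eta_2=0$. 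Without this step the kernel would appear to contain the $\WW$-direction $\alpha$, which is \emph{not} in the tangent spaces of Corollary~\ref{mod-tang} or Lemma~\ref{mod-tang2}, and cleanness would fail. Your sentence ``constant Lie-algebra deformations together with the reparametrisation/$\WW$-directions'' is therefore wrong as stated: the $\WW$-direction is never in the kernel.

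Your obstruction-bundle construction is more seriously flawed. Applying $D\dbar$ to the non-kernel direction $(0,i\eta_2,0)$ produces $\tfrac{\eta_2}{\tau_2}\partial_bw$, which lies \emph{in the image} of $D\dbar$ by definition --- it cannot represent a nonzero class in the cokernel. (You also write that $\partial_bh_{\aa}$ is ``a fixed nonzero constant vector in $\zz$'', but it lies in $\aa$.) A section of the obstruction bundle is an element of the target \emph{orthogonal} to the image; you cannot manufacture one by pushing a domain variation through $D$. The paper's construction is entirely different and much simpler: take the constant section $\sigma(\partial_a)=\mathbf{n}_3$. One checks directly (Lemma~\ref{obscalc}) that $\int D(\alpha,\eta,\xi)(\partial_a)\cdot\mathbf{n}_3\,\dvol=0$ for every $(\alpha,\eta,\xi)$, using that $\partial_ph_3=\partial_qh_3=0$ in this case and that $\mathbf{n}_3$ is orthogonal both to $[\nn,\nn]$ and to $\psi[\nn,\nn]$. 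This $\sigma$ is visibly nowhere-vanishing and constant over each component, so $e(\Obs)=0$.
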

By Lemma \ref{reparawlog} we may always assume that $h$ is reduced.


\subsection{The setup for Fredholm theory}
Fix a homomorphism $\rho\colon\ZZ^2\to\Gamma$. Let $W^{1,\ell}_{\rho}(\RR^2,\nn)$ be the $W^{1,\ell}$-completion of the space of smooth maps $w\colon\RR^2\to\nn$ which are $\rho$-equivariant in the sense that
\[\exp(w(\gamma+z))=\rho(\gamma)\exp(w(z))\ \mbox{for all }\gamma\in\ZZ^2\]
Then $\mB:=\WW\times\HH\times W^{1,\ell}_{\rho}(\RR^2,\nn)$ is a Banach manifold whose tangent space at $(\psi,\tau,w)$ is the vector space
\[T_{\psi}\WW\oplus T_{\tau}\HH\oplus W^{1,\ell}_{\rho}(\RR^2,w^*T\nn)\]
where the subscript $\rho$ denotes equivariant sections. Define the Banach bundle $\mE$ over $\mB$ whose fibre over $(\psi,\tau,w)$ is the $L^\ell$-completion of
\[\Omega^{0,1}_{\psi,\tau,\rho}\left(\RR^2,w^*T\nn\right)\]
where $\Omega^{(0,1)}_{\psi,\tau,\rho}\left(\RR^2,w^*T\nn\right)$ denotes the space of smooth $\rho$-equivariant one-forms on $\RR^2$ with values in $w^*T\nn$ which are anticomplex with respect to the almost complex structures $(j_{\tau},w^*\psi)$. The $\dbar$-operator
\[\dbar(\psi,\tau,w)=\psi\left(\partial_aw-\frac{1}{2}[w,\partial_aw]\right)-\partial_bw+\frac{1}{2}[w,\partial_bw]\]
gives a section of this bundle whose zero-set comprises the logarithms of tori $\torus\to K$ in the given homotopy class which are $(j_{\tau},J_{\psi})$-holo\-mor\-phic for some $\psi\in\WW$ and some $\tau\in\HH$.

The first aim is to understand the kernel of the linearised $\dbar$-operator.
\begin{prp}
The linearised $\dbar$-operator at a pseudoholomorphic torus $(\psi,\tau,w)$ is an operator
\[D_{(\psi,\tau,w)}\dbar\colon T_{\psi}\WW\oplus T_{\tau}\HH\oplus W^{1,\ell}_{\rho}(\RR^2,w^*T\nn)\to L^{\ell}\Omega^{(0,1)}_{\psi,\tau,\rho}\left(\RR^2,w^*T\nn\right)\]
If we define $D(\alpha,\eta,\xi):=D_{(\psi,\tau,w)}\dbar(\alpha,\eta,\xi)(\partial_a)$ (which determines the whole operator $D\dbar$ since this takes values in $(0,1)$-forms) then $D$ is given by the following equation
\begin{align}\label{lineq}
D(\alpha,\eta,\xi)&=\psi\partial_a\xi-\partial_b\xi+\alpha\left(\partial_aw-\frac{1}{2}[w,\partial_aw]\right)+\frac{1}{\tau_2}\left(\eta_1\partial_aw+\eta_2\partial_bw\right)\\
\nonumber&\ \ -\frac{1}{2}\psi\left([\xi,\partial_aw]+[w,\partial_a\xi]\right)+\frac{1}{2}\left([\xi,\partial_bw]+[w,\partial_b\xi]\right).
\end{align}
\end{prp}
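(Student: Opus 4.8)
The plan is to compute $D_{(\psi,\tau,w)}\dbar$ directly from the formula for $\dbar$. Write $\dbar(\psi,\tau,w)$ as a one-form on $\RR^2$ with values in $w^*T\nn$; since it is $(0,1)$ with respect to $(j_\tau, w^*\psi)$, it is determined by its value on $\partial_a$, namely
\[
\dbar(\psi,\tau,w)(\partial_a)=\psi\Bigl(\partial_a w-\tfrac12[w,\partial_a w]\Bigr)-\Bigl(\partial_b w-\tfrac12[w,\partial_b w]\Bigr),
\]
using Equation \eqref{pseudohol}. So it suffices to linearise this expression in the three directions separately, and the answer will be Equation \eqref{lineq}.

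First I would treat the $\xi$-direction: fix $\psi$ and $\tau$ and differentiate in $w$ along a path $w+s\xi$. The terms $\psi\partial_a w$ and $-\partial_b w$ are linear in $w$, contributing $\psi\partial_a\xi-\partial_b\xi$. The bracket terms $-\tfrac12\psi[w,\partial_a w]$ and $+\tfrac12[w,\partial_b w]$ are quadratic in $w$; their derivative at $w$ in the direction $\xi$ is, by the Leibniz rule, $-\tfrac12\psi\bigl([\xi,\partial_a w]+[w,\partial_a\xi]\bigr)+\tfrac12\bigl([\xi,\partial_b w]+[w,\partial_b\xi]\bigr)$. This accounts for the first, fifth and sixth terms of \eqref{lineq}. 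Next the $\alpha$-direction: $\tau$ and $w$ are fixed, $\psi$ varies along $\psi+s\alpha$ with $\alpha\in T_\psi\WW$; since only $\psi$ multiplies $\partial_a w-\tfrac12[w,\partial_a w]$, the derivative is simply $\alpha\bigl(\partial_a w-\tfrac12[w,\partial_a w]\bigr)$, the third term. Finally the $\eta$-direction: here $\psi$ and $w$ are fixed but $\tau$ varies, and the subtle point is that $\partial_b$ depends on $\tau$ while $\partial_a$ does not (recall Equation \eqref{pqabderiv}, $\partial_a=\partial_p$ and $\partial_b=(\partial_q-\tau_1\partial_p)/\tau_2$). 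Linearising $\partial_b$ at $\tau$ in the direction $\eta=(\eta_1,\eta_2)$ (writing $\eta_1=\delta\tau_1$, $\eta_2=\delta\tau_2$) gives $\delta(\partial_b)=-\tfrac{1}{\tau_2}\eta_1\partial_a-\tfrac{1}{\tau_2}\eta_2\partial_b$ acting on any function; applying this to $w-\tfrac12[w,w]$ — and noting $[w,w]=0$ — produces $\tfrac{1}{\tau_2}\bigl(\eta_1\partial_a w+\eta_2\partial_b w\bigr)$, the fourth term. One has to be a little careful about the identification of $T_\tau\HH$ with $\RR^2$ and the constant $1/\tau_2$; I would fix conventions so that the deformation of $j_\tau$ matches the normalisation implicit in Section \ref{nilp}, which is exactly what yields the stated coefficient.

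Assembling the three pieces gives Equation \eqref{lineq}. The fact that the operator takes values in $(0,1)$-forms, and hence is determined by its $\partial_a$-component, is the reason we only need to track $D(\alpha,\eta,\xi):=D_{(\psi,\tau,w)}\dbar(\alpha,\eta,\xi)(\partial_a)$; once we know this, the full one-form is recovered by $(0,1)$-projection, so there is nothing further to check. The only mild obstacle is bookkeeping in the $\tau$-direction: keeping straight which of $\partial_a,\partial_b$ moves, and confirming the normalisation of $T_\tau\HH$, so that the $1/\tau_2$ factor and the pairing of $\eta_1$ with $\partial_a w$ (rather than $\partial_b w$) come out correctly. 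Everything else is a routine application of the Leibniz rule, using that $[w,w]=0$ and that the bracket is bilinear.
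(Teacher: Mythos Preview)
Your approach is essentially the paper's: linearise separately in $\xi$, $\alpha$, and $\eta$, with only the $\eta$-direction requiring care. For that direction the paper computes the variation of the domain complex structure in the fixed $(a,b)$-coordinates,
\[
\delta_\eta j_i=\Phi_\tau(\delta_\eta j_\tau)\Phi_\tau^{-1}=\frac{1}{\tau_2}\begin{pmatrix}-\eta_1&-\eta_2\\-\eta_2&\eta_1\end{pmatrix},
\]
whereas you differentiate $\partial_b=(\partial_q-\tau_1\partial_p)/\tau_2$ directly; these are equivalent since $\delta(\partial_b)=(\delta_\eta j_i)\partial_a$.

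There is one slip in your $\eta$-computation. You justify dropping the bracket contributions by ``applying $\delta(\partial_b)$ to $w-\tfrac12[w,w]$'' and invoking $[w,w]=0$. But the expression being varied is $\partial_b w-\tfrac12[w,\partial_b w]=\pi_\nn(w,\partial_b w)$, which is \emph{not} $\partial_b$ applied to any single function; in particular $\partial_b\bigl(w-\tfrac12[w,w]\bigr)=\partial_b w$, not what appears in $\dbar$. Differentiating honestly gives
\[
D(0,\eta,0)=\tfrac{1}{\tau_2}\Bigl(\eta_1\,\pi_\nn(w,\partial_a w)+\eta_2\,\pi_\nn(w,\partial_b w)\Bigr),
\]
so bracket terms $-\tfrac{1}{2\tau_2}\bigl(\eta_1[w,\partial_a w]+\eta_2[w,\partial_b w]\bigr)$ remain. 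The stated formula \eqref{lineq} also omits them, and nothing later is affected: for reduced $h$ one has $\partial_a h_\aa=0$ hence $[w,\partial_a w]=0$, while the $\eta_2$-bracket lies in $[\nn,\nn]$ and so plays no role in the $\aa$-equation forcing $\eta_2=0$ (Lemma~\ref{kernli}) nor in the pairing with $\mathbf{n}_3$ (Lemma~\ref{obscalc}). Replace the $[w,w]=0$ shortcut with the direct computation and your argument is complete.
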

\begin{proof}
The equation to-be-linearised is
\[\dbar(\psi,\tau,w)=\psi\left(\partial_aw-\frac{1}{2}[w,\partial_aw]\right)-\partial_bw+\frac{1}{2}[w,\partial_bw]\]
The only nonobvious part of the computation is the effect of an infinitesimal variation $\eta$ of $\tau$
\begin{align*}D(0,\eta,0)&=\frac{1}{\tau_2}\left(\eta_1\partial_aw+\eta_2\partial_bw\right)
\end{align*}
To see this, recall that in the coordinates $(a,b)$ the complex structure is simply $j_i=\Phi_{\tau}j_{\tau}\Phi_{\tau}^{-1}$. If $\eta$ is an infinitesimal variation of $\tau$ then the infinitesimal variation of $j_i$ is computed with respect to the same coordinates $(a,b)$ by
\[\delta_{\eta} j_i=\Phi_{\tau}\delta_{\eta}j_{\tau}\Phi_{\tau}^{-1}\]
since $\Phi_{\tau}$ is just a change of coordinate matrix and hence is not affected by the variation. We compute
\[\delta_{\eta}j_i=\Phi_{\tau}\delta_{\eta}j_{\tau}\Phi_{\tau}^{-1}=\frac{1}{\tau_2}\left(\begin{array}{cc}
-\eta_1 & -\eta_2\\
-\eta_2 & \eta_1
\end{array}\right).\]
\end{proof}


\subsection{Equivariance}\label{equivar}
We now examine more carefully the equivariance condition
\[\exp(w(\gamma+z))=\rho(\gamma)\exp(w(z))\ \mbox{for all }\gamma\in\ZZ^2\]
and its linearisation. By the Baker-Campbell-Hausdorff formula we have
\[w(\gamma+z)=\log\rho(\gamma)+w(z)+\frac{1}{2}[\log\rho(\gamma),w(z)]\]
If $\xi$ is an infinitesimal deformation of $w$ as an equivariant map then
\[\xi(\gamma+z)=\xi(z)+\frac{1}{2}[\log\rho(\gamma),\xi(z)]\]
In particular we see that $\xi_{\aa}$ is $\ZZ^2$-invariant and hence bounded. The combination $\xi_{\zz}-\frac{1}{2}[w,\xi_{\aa}]$ is also $\ZZ^2$-invariant:
\begin{align*}
\xi_{\zz}(\gamma+z)-\frac{1}{2}[w(\gamma+z),\xi_{\aa}(\gamma+z)]&=\xi_{\zz}(z)+\frac{1}{2}[\log\rho(\gamma),\xi_{\aa}(z)]\\
&\ \ \ \ \ \ -\frac{1}{2}[\log\rho(\gamma)+w(z),\xi_{\aa}(z)]\\
&=\xi_{\zz}(z)-\frac{1}{2}[w(z),\xi_{\aa}(z)].
\end{align*}


\subsection{Regularity and obstructions}\label{regobs}

Remember that $w=h+C+\frac{1}{2}[h,C]$ where $h$ is a Lie algebra homomorphism and $C$ is a constant (in particular, second derivatives of $w$ vanish). Cross-differentiating
\[D(\alpha,\eta,\xi)=0\]
using Equation \eqref{lineq} we get
\begin{align*}
0&=\psi\partial_a^2\xi-\partial_a\partial_b\xi-\frac{1}{2}\psi[w_{\aa},\partial_a^2\xi_{\aa}]+\frac{1}{2}([\partial_{[a}\xi_{\aa},\partial_{b]}w_{\aa}]+[w_{\aa},\partial_a\partial_b\xi_{\aa}])\\
0&=\psi\partial_a\partial_b\xi-\partial_b^2\xi+\frac{1}{2}\psi([\partial_{[a}\xi_{\aa},\partial_{b]}w_{\aa}]-[w_{\aa},\partial_a\partial_b\xi_{\aa}])+\frac{1}{2}[w_{\aa},\partial_b^2\xi_{\aa}]\\
&\ \ \ \ \ +\frac{1}{2}\alpha([\partial_aw,\partial_bw])
\end{align*}
Since $h$ is a homomorphism from an abelian Lie algebra
\[[\partial_aw,\partial_bw]=[\partial_ah,\partial_bh]=0.\]
The equations then give
\[\Delta\xi-\frac{1}{2}[w_{\aa},\Delta\xi_{\aa}]-\psi[\partial_{[a}\xi_{\aa},\partial_{b]}w_{\aa}]=0\]
The $\aa$- and $\zz$-parts of this equation are
\begin{align*}
\Delta\xi_{\aa}&=\psi[\partial_{a}\xi_{\aa},\partial_{b}w_{\aa}]\\
\Delta\xi_{\zz}&=\frac{1}{2}[w_{\aa},\Delta\xi_{\aa}]
\end{align*}

\begin{prp}
Suppose that $\psi\in\WW$ and that $w=h+C+\frac{1}{2}[h,C]$ is the logarithm of a $(j_{\tau},J_{\psi})$-holomorphic curve in $K$ with linearised Cauchy-Riemann operator $D$. Then, if $D(\alpha,\eta,\xi)=0$ then $\xi_{\aa}$ and $\xi_{\zz}-\frac{1}{2}[w,\xi_{\aa}]$ are constant.
\end{prp}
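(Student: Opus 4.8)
The plan is to read off the conclusion from the triangular structure of the second-order system just derived, applying the maximum principle componentwise exactly as in the proof of Proposition~\ref{classiftori}. Recall (Lemma~\ref{reparawlog}) that we may assume $h$ is reduced, so that $w=h+C+\tfrac12[h,C]$ has $w_\aa=h_\aa+C_\aa$ with $\partial_a h_\aa=0$ and $h_\aa$ linear in the conformal coordinate $b$; in particular $\partial_b w_\aa=\partial_b h_\aa$ is a constant vector. From Section~\ref{equivar} we also know that $\xi_\aa$ and $\xi_\zz-\tfrac12[w,\xi_\aa]$ are $\ZZ^2$-invariant, hence bounded.

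First I would fix the orthogonal splitting $\aa=\psi\tt\oplus\qq$: indeed $\aa=\psi\zz$ and $\zz=\tt\oplus\pp$, while $\psi\tt\perp\psi\pp=\qq$ because $\psi$ is orthogonal and $\tt\perp\pp$. Since $[\aa,\aa]\subset\pp$, the right-hand side of $\Delta\xi_\aa=\psi[\partial_a\xi_\aa,\partial_b w_\aa]$ lies in $\psi\pp=\qq$; projecting onto the complementary summand $\psi\tt$ gives $\Delta\xi_{\psi\tt}=0$, so by boundedness and the maximum principle $\xi_{\psi\tt}$ is constant. Feeding this back, the $\qq$-component of the equation becomes $\Delta\xi_\qq=\psi[\partial_a\xi_\qq,\partial_b h_\aa]$, which — since $\qq$ is one-dimensional and $\partial_b h_\aa$ is constant — is a scalar linear elliptic equation $\Delta f=c\,\partial_a f$ with constant coefficients and no zeroth-order term. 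Boundedness of $\xi_\qq$ and the Hopf maximum principle \cite[Theorem~3.1]{GT} (as in Proposition~\ref{classiftori}) then force $\xi_\qq$, and hence $\xi_\aa$, to be constant. This is the one step that needs care: the one-dimensionality of $\qq$ is precisely what rules out any nonlinear coupling, just as in the proof of Proposition~\ref{classiftori}, and it is the closest thing to an obstacle here.

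Finally I would set $\zeta:=\xi_\zz-\tfrac12[w,\xi_\aa]$, which is bounded by the equivariance computation of Section~\ref{equivar}. From the $\zz$-part of the system, $\Delta\xi_\zz=\tfrac12[w_\aa,\Delta\xi_\aa]=0$ since $\xi_\aa$ is now constant. Moreover $[w,\xi_\aa]=[w_\aa,\xi_\aa]$ because $w_\zz$ is central, and this depends affine-linearly on the coordinates $(a,b)$ since $w_\aa=h_\aa+C_\aa$ is affine-linear and $\xi_\aa$ is constant; hence $\Delta[w,\xi_\aa]=0$ and therefore $\Delta\zeta=0$. A last application of the maximum principle to the bounded harmonic function $\zeta$ gives the claim. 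The argument is routine once the triangular (uncoupled) form of the system is in place; the only real bookkeeping is tracking which combinations of the unknowns are a priori bounded.
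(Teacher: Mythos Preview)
Your proof is correct and follows essentially the same route as the paper's: both exploit the one-dimensionality of $\qq=\psi[\nn,\nn]$ to decouple the second-order system and then apply the Hopf maximum principle componentwise, exactly mirroring Proposition~\ref{classiftori}. The only cosmetic difference is that the paper organises the splitting as $\nn=\bb\oplus\pp\oplus\qq$ with $\bb=\tt\oplus\psi\tt$ (handling $\xi_\bb$ first, then $\xi_\qq$, then $\xi_\pp$), whereas you work with $\aa=\psi\tt\oplus\qq$ and treat all of $\xi_\zz$ at once via $\zeta$ at the end; the logical content is the same.
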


\begin{proof}
Split the Lie algebra $\nn$ as in the proof of Proposition \ref{classiftori} into $\bb\oplus[\nn,\nn]\oplus\psi[\nn,\nn]$. We see that
\[\Delta\xi_{\bb}=0\]
and $\xi_{\bb}$, being periodic, is constant. Next
\[\Delta\xi_{\qq}=\psi[\partial_{a}\xi_{\qq},\partial_{b}w_{\bb}]\]
which is a linear elliptic equation with constant coefficients for the single bounded quantity $\xi_{\qq}$, which is therefore constant. Finally
\[\Delta\xi_{\pp}=\frac{1}{2}[w_{\bb},\Delta\xi_{\qq}]=0\]
and so $\xi_{\pp}-\frac{1}{2}[w,\xi_{\aa}]$ is harmonic and bounded.
\end{proof}

To prove Theorem \ref{cleanthm} we compute the kernel of $D$ and then compare with the computation of the tangent spaces of the moduli space in Corollary \ref{mod-tang}. By Lemma \ref{reparawlog}, we assume without loss of generality that $\partial_ph_{\aa}=0$. Taking the $\aa$-part of the linearised equation and using the facts that
\begin{align*}
\alpha(\zz)&\subset\aa&\alpha(\aa)&\subset\zz\\
\partial_a\xi_{\aa}&=0 & \partial_b\xi_{\aa}&=0\\
\partial_a\xi_{\zz}&=\frac{1}{2}\left([\partial_aw_{\aa},\xi_{\aa}]+[w_{\aa},\partial_a\xi_{\aa}]\right)&\partial_b\xi_{\zz}&=\frac{1}{2}\left([\partial_bw_{\aa},\xi_{\aa}]+[w_{\aa},\partial_b\xi_{\aa}]\right)\\
&=0&&=\frac{1}{2}[\partial_bh_{\aa},\xi_{\aa}]
\end{align*}
we get
\[0=\alpha(\partial_ah_{\zz})+\frac{\eta_2}{\tau_2}\partial_bh_{\aa}\]
and for the $\zz$-part we get
\begin{align*}
0&=-\partial_b\xi_{\zz}+\frac{1}{2}[\xi_{\aa},\partial_bh_{\aa}]+\frac{1}{\tau_2}\left(\eta_1\partial_ah_{\zz}+\eta_2(\partial_bw)_{\zz}\right)\\
&=[\xi_{\aa},\partial_bh_{\aa}]+\frac{1}{\tau_2}\left(\eta_1\partial_ah_{\zz}+\eta_2(\partial_bw)_{\zz}\right)
\end{align*}

\begin{lma}\label{kernli}
We describe the kernel of $D$. If $D(\alpha,\eta,\xi)=0$ then the vector $\xi_{\zz}-\frac{1}{2}[w,\xi_{\aa}]$ is arbitrary, $\eta_2=0$ and $\alpha=0$. Moreover:
\begin{itemize}
\item If $\partial_ph_3\neq 0$ then $\eta_1=0$ and $\xi\in\ker[\partial_bh_{\aa},\cdot]$.
\item If $\partial_ph_3=0$ then $\eta_1$ is arbitrary and $\xi_{\aa}$ satisfies
\[[\xi_{\aa},\partial_qh_{\aa}]+\eta_1\partial_ah_{\zz}=0.\]
\end{itemize}
\end{lma}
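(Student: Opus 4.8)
The plan is to feed the two identities already extracted from $D(\alpha,\eta,\xi)=0$ --- the $\aa$-part
\[0=\alpha(\partial_a h_{\zz})+\frac{\eta_2}{\tau_2}\partial_b h_{\aa}\]
and the $\zz$-part
\[0=[\xi_{\aa},\partial_b h_{\aa}]+\frac{1}{\tau_2}\bigl(\eta_1\partial_a h_{\zz}+\eta_2(\partial_b w)_{\zz}\bigr)\]
--- into the structure of $\nn$ and of $\psi$. The Proposition just proved records a kernel element by the constant data $\bigl(\alpha,\eta,\xi_{\aa},\xi_{\zz}-\frac12[w,\xi_{\aa}]\bigr)$. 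Neither identity above involves the central vector $\xi_{\zz}-\frac12[w,\xi_{\aa}]$ (it is killed by every Lie bracket and is never hit by $\psi$ in these equations), so this vector is unconstrained; that is part of the first assertion.

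Next I would dispatch the $\aa$-part equation to obtain $\alpha=0$ and $\eta_2=0$. Since $h$ is reduced, $\partial_a h_{\zz}=\partial_p h_{\zz}=\partial_p h_3\,\mathbf{n}_3+\partial_p h_4\,\mathbf{n}_4$ and $\partial_b h_{\aa}=\frac1{\tau_2}\partial_q h_{\aa}$; moreover, by Lemma \ref{enum} the rotation $\psi|_{\zz}=\Psi$ sends $(\partial_p h_3,\partial_p h_4)$ to $\frac1{\tau_2}(\partial_q h_1,\partial_q h_2)$, so that $\psi(\partial_p h_{\zz})=\partial_b h_{\aa}$. Hence the $\aa$-part equation reads $\bigl(\alpha+\frac{\eta_2}{\tau_2}\psi\bigr)(\partial_p h_{\zz})=0$. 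Writing $\alpha|_{\zz}=r\,\Psi_{\theta+\pi/2}$ and $\psi|_{\zz}=\Psi_\theta$ in the bases $\mathbf{n}_3,\mathbf{n}_4$ and $\mathbf{n}_1,\mathbf{n}_2$, a direct computation gives $\alpha|_{\zz}+\frac{\eta_2}{\tau_2}\psi|_{\zz}=\lambda\,\Psi_{\theta'}$ for some angle $\theta'$, with $\lambda^2=r^2+\eta_2^2/\tau_2^2$, and this is invertible unless $\lambda=0$. But $A\neq 0$ forces $\partial_p h_{\zz}\neq 0$ (otherwise $m=-\partial_q h_1\partial_p h_3$ and $n=-\partial_q h_1\partial_p h_4$ would both vanish), so $\lambda=0$, i.e.\ $r=0$ and $\eta_2=0$; that is $\alpha=0$ and $\eta_2=0$.

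With $\alpha=0$ and $\eta_2=0$, multiplying the $\zz$-part equation by $\tau_2$ and using $\partial_b h_{\aa}=\frac1{\tau_2}\partial_q h_{\aa}$ and $\partial_a h_{\zz}=\partial_p h_{\zz}$ reduces it to
\[[\xi_{\aa},\partial_q h_{\aa}]+\eta_1\partial_p h_{\zz}=0.\]
Because $[\aa,\aa]=[\nn,\nn]=\langle\mathbf{n}_4\rangle$, the bracket contributes nothing to the $\mathbf{n}_3$-direction, so the $\mathbf{n}_3$-component of this equation is $\eta_1\partial_p h_3=0$. If $\partial_p h_3\neq 0$ then $\eta_1=0$ and what remains is the $\mathbf{n}_4$-equation $[\xi_{\aa},\partial_q h_{\aa}]=0$; since $\partial_b h_{\aa}$ is a nonzero multiple of $\partial_q h_{\aa}$ and $\xi_{\zz}$ is central, this is exactly $\xi\in\ker[\partial_b h_{\aa},\cdot]$. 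If $\partial_p h_3=0$ the $\mathbf{n}_3$-component is vacuous, $\eta_1$ is free, and since then $\partial_a h_{\zz}=\partial_p h_4\,\mathbf{n}_4$ the leftover equation is precisely $[\xi_{\aa},\partial_q h_{\aa}]+\eta_1\partial_a h_{\zz}=0$. (Recall that $\partial_p h_3=0$ if and only if $A_{13}=0$, since $A_{13}=-\partial_q h_1\partial_p h_3$ for reduced $h$ and $\partial_q h_1\neq0$.)

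The only step that needs genuine care is the determinant computation in the second paragraph; it is exactly the place where the variation of $\psi$ within $\WW$ is seen to be transverse to the wall of non-regular almost complex structures, which is ultimately why the moduli space is clean rather than singular.
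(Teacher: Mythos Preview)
Your proof is correct and follows essentially the same route as the paper's. The only cosmetic difference is in the linear algebra for $\alpha=0$, $\eta_2=0$: you observe that $r\Psi_{\theta+\pi/2}+\tfrac{\eta_2}{\tau_2}\Psi_\theta$ is a scalar multiple of a rotation with scalar $\sqrt{r^2+\eta_2^2/\tau_2^2}$, hence invertible unless both vanish, while the paper left-multiplies by $\Psi_\theta^{-1}$ to reduce to the nonexistence of real eigenvectors of $\left(\begin{smallmatrix}0&-1\\1&0\end{smallmatrix}\right)$; these are equivalent two-line arguments.
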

\begin{proof}
To see $\eta_2=0$, recall from Equation \eqref{reducedCR1} that $\psi\partial_ah_{\zz}=\partial_bh_{\aa}$ so, if
\[\alpha=r\left(\begin{array}{cc}
-\sin\theta & -\cos\theta\\
\cos\theta & -\sin\theta
\end{array}\right)\in T\WW\]
then
\[r\left(\begin{array}{cc}
-\sin\theta & -\cos\theta\\
\cos\theta & -\sin\theta
\end{array}\right)\left(\begin{array}{c}
\partial_ph_3\\
\partial_ph_4
\end{array}\right)=-\frac{\eta_2}{\tau_2}r\left(\begin{array}{cc}
\cos\theta & -\sin\theta\\
\sin\theta & \cos\theta
\end{array}\right)\left(\begin{array}{c}
\partial_ph_3\\
\partial_ph_4
\end{array}\right)\]
Multiplying by $\Psi_{\theta}^{-1}$ on the left tells us that $\left(\begin{array}{c}\partial_ph_3\\ \partial_ph_4\end{array}\right)$ is a real eigenvector of $\left(\begin{array}{cc}0 & -1\\ 1 & 0\end{array}\right)$ unless $r=\eta_2=0$. Since this matrix has only imaginary eigenvalues this is impossible.

The $\zz$-part of the equation now becomes
\[[\xi_{\aa},\partial_bh_{\aa}]+\frac{\eta_1}{\tau_2}\partial_ah_{\zz}=0.\]
Recall from Lemma \ref{redeq} that $\partial_bh_{\zz}=[C_{\aa},\partial_bh_{\aa}]$ so that $\partial_bh_3=0$ (the $t$-direction is orthogonal to the commutator). Since $w_3=h_3+C_3$ and $\mathbf{n}_3\perp[\nn,\nn]$ we see that if $\partial_ph_3\neq 0$ then $\eta_1=0$. The rest is now clear by inspection.
\end{proof}

By comparing with Corollary \ref{mod-tang} and Lemma \ref{mod-tang2} we see that the kernel of $D$ is equal to the tangent space of the moduli space, proving the cleanliness claimed in Theorem \ref{cleanthm}.

From the expected and actual dimension formulae for the moduli spaces we see that the moduli spaces of pseudoholomorphic tori are regular if and only if $\partial_ph_3\neq 0$. We will now write down sections of the obstruction bundles for each moduli space which is not regular. Since a fibre of the obstruction bundle is a space of $(0,1)$-forms, it suffices to specify the value of a section $\sigma$ on the vector $\partial_a$.
\begin{lma}\label{obscalc}
The section $\sigma(\partial_a)=\mathbf{n}_3$ is a nowhere-vanishing section of the obstruction bundle over moduli spaces $h_1\not\equiv 0$, $\partial_ph_3=0$.
\end{lma}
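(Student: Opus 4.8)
The plan is to show that the putative section $\sigma(\partial_a)=\mathbf{n}_3$ really does define an element of the cokernel of $D$ at every point of the relevant moduli space, and that it is nonzero there. Since the moduli spaces are clean by the preceding discussion, the cokernels $\coker(D)$ patch together into the obstruction bundle $\Obs$ over $\mM_{1,1}(\WW,[h]_{\OP{SL}})$, and a nowhere-vanishing section forces the Euler class $e(\Obs)$ — and hence the contribution to $\GW_{1,1}$ — to vanish. So the content is: (i) $\mathbf{n}_3$ (thought of as the constant $(0,1)$-form $\eta\mapsto \eta(\partial_a)\cdot\mathbf{n}_3$, equivariant and of the right regularity) is $L^2$-orthogonal to the image of $D$; and (ii) it is literally nonzero, which is obvious since $\mathbf{n}_3$ is a nonzero vector in $\nn$ and the constant one-form built from it is nonzero.

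First I would pin down the setting: we are in the case $\partial_ph_3=0$ (equivalently $A_{13}=0$, $m=0$), where by Lemma \ref{mod-tang2} $n\neq 0$ and the moduli space is four-dimensional. For such $h$ the second equation in Lemma \ref{redeq} gives $\partial_bh_3=0$, and since $\partial_ph_3=\partial_qh_3=0$ the whole $\mathbf{n}_3$-component of $w$ is constant; this is the structural fact driving everything. Then I would show $\mathbf{n}_3$ is orthogonal to the image of $D$ by pairing it against $D(\alpha,\eta,\xi)$ using the formula \eqref{lineq}. Concretely, one takes the $\mathbf{n}_3$-component of each term of \eqref{lineq}: the bracket terms $[\xi,\partial_aw],[w,\partial_a\xi],[\xi,\partial_bw],[w,\partial_b\xi]$ all lie in $[\nn,\nn]=\pp$, which is orthogonal to $\mathbf{n}_3\in\tt$, so they drop out; the term $\alpha(\partial_aw-\tfrac12[w,\partial_aw])$ has $\mathbf{n}_3$-component $\alpha(\partial_ah_{\zz})\cdot\mathbf{n}_3$, but $\alpha\in T\WW$ maps $\zz\to\aa$ (as recorded in Section \ref{regobs}), so this is zero; the term $\tfrac1{\tau_2}(\eta_1\partial_aw+\eta_2\partial_bw)$ contributes $\tfrac1{\tau_2}(\eta_1\partial_ah_3+\eta_2\partial_bh_3)\mathbf{n}_3=0$ since $\partial_ah_3=\partial_bh_3=0$; and the leading term $\psi\partial_a\xi-\partial_b\xi$ has $\mathbf{n}_3$-component equal to $(\psi\partial_a\xi)_3-\partial_b\xi_3$. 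Here $\psi$ swaps $\aa$ and $\zz$, so $(\psi\partial_a\xi)_3$ is a component of something in $\aa$, hence has no $\tt\subset\zz$ part: $(\psi\partial_a\xi)_3=0$. This leaves $-\partial_b\xi_3$. The subtlety is that $\xi_3$ need not be constant a priori; but one pairs against the constant one-form, i.e. integrates $\langle \mathbf{n}_3, D(\alpha,\eta,\xi)(\partial_a)\rangle$ over $T^2=\ZZ^2\backslash\RR^2$, and $\int_{T^2}\partial_b\xi_3\,da\,db=0$ because $\xi_3$ is periodic (using the equivariance analysis of Section \ref{equivar}, which shows $\xi_{\aa}$ and in particular its role in $\xi_{\zz}$-components is periodic — more carefully, $\xi_3$ is the $\tt$-component, on which the twisting term $\tfrac12[w,\xi_{\aa}]$ vanishes since $[w,\xi_\aa]\in\pp\perp\tt$, so $\xi_3$ itself is genuinely $\ZZ^2$-periodic). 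Hence the $L^2$-pairing of $\mathbf{n}_3$ with every element of $\im(D)$ vanishes.

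The main obstacle is bookkeeping the equivariance: one must be sure the constant section $\mathbf{n}_3$ is actually a legitimate element of the fiber $L^\ell\Omega^{(0,1)}_{\psi,\tau,\rho}(\RR^2,w^*T\nn)$ — i.e. both $\rho$-equivariant and of type $(0,1)$ — and that $\xi_3$ is honestly periodic so that the integration-by-parts step is valid. Both follow from Section \ref{equivar}: the $\zz$-valued (indeed $\tt$-valued) constant is $\ZZ^2$-invariant since $[\log\rho(\gamma),\mathbf{n}_3]\in\pp$ and we only keep its $\tt$-part, and anticomplexity is arranged by taking the $(0,1)$-part of the constant one-form $da\otimes\mathbf{n}_3$, which is nonzero because $\mathbf{n}_3\neq 0$ and $\psi$ is genuinely a complex structure. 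Once these checks are in place the lemma follows, and combined with the cleanliness already established, it shows via $\GW_{1,1}(\WW,A)=\ev_*\OP{PD}(e(\Obs))$ that the components with $\partial_ph_3=0$ contribute nothing to the Gromov-Witten invariant.
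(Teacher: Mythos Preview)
Your approach is essentially the paper's: pair the constant section $\mathbf{n}_3$ against $D(\alpha,\eta,\xi)$ term by term, kill the bracket terms because they land in $[\nn,\nn]\perp\mathbf{n}_3$, kill the $\alpha$-term because $\alpha(\zz)\subset\aa$, kill the $\eta$-term because $h_3$ is constant, and handle the first-order derivative terms by integration by parts using periodicity. The structure is right and the equivariance bookkeeping (in particular that $\xi_3$ is genuinely $\ZZ^2$-periodic since the twisting term lies in $\pp\perp\tt$) is correct.

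There is one genuine slip. You assert that $(\psi\partial_a\xi)_3=0$ pointwise on the grounds that ``$\psi$ swaps $\aa$ and $\zz$, so $(\psi\partial_a\xi)_3$ is a component of something in $\aa$''. This is backwards: $\psi$ sends the $\aa$-part of $\partial_a\xi$ \emph{into} $\zz$, and that can certainly have a nonzero $\mathbf{n}_3$-component. Concretely, $g(\psi\partial_a\xi,\mathbf{n}_3)=-g(\partial_a\xi,\psi\mathbf{n}_3)$ with $\psi\mathbf{n}_3\in\aa$, and there is no reason for $\partial_a\xi_{\aa}$ to be orthogonal to $\psi\mathbf{n}_3$ pointwise. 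The correct treatment --- which is exactly what the paper does for both of the ``first two terms'' $\psi\partial_a\xi$ and $-\partial_b\xi$ --- is to integrate by parts:
\[
\int_{T^2} g(\psi\partial_a\xi,\mathbf{n}_3)\,\dvol=-\int_{T^2}\partial_a\,g(\xi_{\aa},\psi\mathbf{n}_3)\,\dvol=0,
\]
using that $\psi\mathbf{n}_3$ is constant and $\xi_{\aa}$ is $\ZZ^2$-periodic (Section~\ref{equivar}). With this correction your proof goes through and matches the paper's.
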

\begin{proof}
Since $\partial_ph_3=0$ and $\partial_qh_3-\tau_1\partial_ph_3=0$ we see that in this case $\partial_qh_3=0$ also. If $(\alpha,\eta,\xi)$ is an infinitesimal variation then we show that
\[\int D_{(\psi,\tau,w)}\dbar(\alpha,\eta,\xi)(\partial_a)\cdot\sigma(\partial_a)\dvol=0\]
by examining the contributions from the three parts separately.

First, since $\alpha(\partial_ah_{\zz})\in\aa$ it is obviously orthogonal to $\mathbf{n}_3$ so this term vanishes. Next, the integrand contribution from $\eta$ is
\[\frac{1}{\tau_2}\left(\eta_1\partial_ah_{\zz}+\eta_2\left(\partial_bh_{\aa}+\partial_bh_{\zz}+\frac{1}{2}[\partial_bh_{\aa},C_{\aa}]\right)\right)\cdot \mathbf{n}_3\]
Since $\partial_ph_3=\partial_qh_3=0$ this vanishes. Finally the contribution from $\xi$ is
\begin{align*}
\int\left(\psi\partial_a\xi-\partial_b\xi-\frac{1}{2}\psi[w_{\aa},\partial_a\xi_{\aa}]+\frac{1}{2}\left([\xi_{\aa},\partial_bh_{\aa}]+[w_{\aa},\partial_b\xi_{\aa}]\right)\right)\cdot \mathbf{n}_3\dvol
\end{align*}
The first two terms vanish by integrating-by-parts (using $\Gamma$-equivariance of $\xi_3$, see Section \ref{equivar}) since $\mathbf{n}_3$ is constant. The other three vanish because $\mathbf{n}_3$ is orthogonal to $\psi[\nn,\nn]$ and to $[\nn,\nn]$.
\end{proof}

This completes the proof of Theorem \ref{cleanthm}.\qed


\subsection{Orientations}\label{orient}

To determine the orientations on our moduli spaces we need to write down a homotopy from the linearised $\dbar$-operator to its com\-plex-linear part $D^{\CC}$. By this we mean the part which is complex-linear in $\xi$. Namely, define
\[\Upsilon(\xi)=\frac{1}{2}\left([\xi,\partial_bw]+[w,\partial_b\xi]-\psi[\xi,\partial_aw]-\psi[w,\partial_a\xi]\right)\]
and set
\begin{align}\label{orient-eq}
D^{\epsilon}(\alpha,\eta,\xi)(\partial_a)&=\psi\partial_a\xi-\partial_b\xi+\alpha\left(\partial_aw-\frac{1}{2}[w,\partial_aw]\right)+\frac{1}{\tau_2}\left(\eta_1\partial_aw+\eta_2\partial_bw\right)\\
\nonumber&\ \ +\Upsilon(\xi)-\frac{\epsilon}{2}(\Upsilon(\xi)-\psi \Upsilon(\psi\xi)).
\end{align}
This is $D$ when $\epsilon=0$ and $D^{\CC}$ when $\epsilon=1$. We obtain (after cross-differen\-tia\-ting)
\begin{align*}
\Delta\xi&=\psi[\partial_{[a}\xi,\partial_{b]}w]+\frac{1}{2}[w,\Delta\xi]-\\
&\ \ \ -\frac{\epsilon}{2}\left(\psi[\partial_{[a}\xi,\partial_{b]}w]+[\psi\partial_{[a}\xi,\partial_{b]}w]\right)+\frac{\epsilon}{4}\left(\psi[w,\psi\Delta\xi]-[w,\Delta\xi]\right)
\end{align*}
This gives $\aa$ and $\zz$ parts
\begin{align}
\label{ORN1}\Delta\xi_{\aa}-\left(1-\frac{\epsilon}{2}\right)\psi[\partial_{[a}\xi_{\aa},\partial_{b]}w_{\aa}]&=\frac{\epsilon}{4}\psi[w_{\aa},\psi\Delta\xi_{\zz}]\\
\label{ORN2}\Delta\xi_{\zz}+\frac{\epsilon}{2}[\psi\partial_{[a}\xi_{\zz},\partial_{b]}w_{\aa}]&=\frac{2-\epsilon}{4}[w,\Delta\xi_{\aa}]
\end{align}
If we define $\orvar=\xi_{\zz}-\frac{1}{2}[w,\xi_{\aa}]$ then
\begin{align*}
\partial \orvar&=\partial\xi_{\zz}-\frac{1}{2}[\partial w,\xi_{\aa}]-\frac{1}{2}[w,\partial\xi_{\aa}]\\
\Delta \orvar&=\Delta\xi_{\zz}-[\partial_aw_{\aa},\partial_a\xi_{\aa}]-[\partial_bw_{\aa},\partial_b\xi_{\aa}]-\frac{1}{2}[w_{\aa},\Delta\xi_{\aa}]
\end{align*}
The left-hand side of Equation \eqref{ORN1} is bounded. The right-hand side can be rewritten as
\[\frac{\epsilon}{4}\psi\left[w_{\aa},\psi\left(\Delta \orvar+[\partial_aw_{\aa},\partial_{a}\xi_{\aa}]+[\partial_bw_{\aa},\partial_{b}\xi_{\aa}]+\frac{1}{2}[w_{\aa},\Delta\xi_{\aa}]\right)\right]\]
which is a sum of terms which are linear or quadratic in $a$ and $b$, in particular it is unbounded unless the coefficients vanish and hence the whole right-hand side is zero. Equation \eqref{ORN1} therefore reduces to a linear elliptic equation which (up to the factor of $1-\frac{\epsilon}{2}$) we have dealt with before. In particular we deduce $\xi_{\aa}$ is constant. Equation \eqref{ORN2} now reduces to a linear elliptic equation we have dealt with before and we deduce that $\orvar$ is constant.


Returning to the original equation \eqref{orient-eq}, and bearing in mind that $\xi_{\aa}$ and $\xi_{\zz}-\frac{1}{2}[w,\xi_{\aa}]$ are constant, we have $\aa$- and $\zz$-components
\begin{align*}
0&=\alpha(\partial_ah_{\zz})+\frac{\eta_2}{\tau_2}\partial_bh_{\aa}+\frac{\epsilon}{4}\psi\left[\psi\left(\xi_{\zz}-\frac{1}{2}[w,\xi_{\aa}]\right),\partial_bw\right]\\
0&=\left(1-\frac{\epsilon}{4}\right)[\xi_{\aa},\partial_bw]+\frac{1}{\tau_2}\left(\eta_1\partial_ah_{\zz}+\eta_2(\partial_bw)_{\zz}\right)
\end{align*}
\begin{lma}
When $\partial_ph_3\neq 0$ the space of solutions to this equation is 3-di\-men\-sional. Explicitly, if we write $\alpha=r\left(\begin{array}{cc}-\sin\theta&-\cos\theta\\ \cos\theta&-\sin\theta\end{array}\right)$, the solutions are:
\begin{align}
\label{Sol1}\tag{S1}\xi_{\aa}&=0,&\orvar&=\partial_ah_{\zz},&r&=0,&\eta&=0\\
\label{Sol2}\tag{S2}\xi_{\aa}&=\partial_bh_{\aa},&\orvar&=0,&r&=0,&\eta&=0
\end{align}
and
\begin{align}
\label{Sol3}\tag{S3}\xi_{\aa}&=-\frac{\partial_ph_4C_{[2}\partial_bh_{1]}}{2\left(1-\frac{\epsilon}{4}\right)|\partial_bh_{\aa}|^2}\left(\begin{array}{c}-\partial_bh_2\\ \partial_bh_1\end{array}\right),&r&=\partial_ph_3,\\
\nonumber \orvar&=\frac{4}{\epsilon}\left(\begin{array}{cc}\cos\theta& -\sin\theta\\ \sin\theta&\cos\theta\end{array}\right)\left(\begin{array}{c}\partial_qh_4\\ -\partial_qh_3\end{array}\right),&\eta&=i\partial_ph_4\tau_2.
\end{align}
\end{lma}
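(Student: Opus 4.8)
The plan is to substitute into the two displayed component equations preceding the lemma the two structural facts just established, namely that $\xi_{\aa}$ and $\orvar=\xi_{\zz}-\frac12[w,\xi_{\aa}]$ are constant, and then to read off the kernel by projecting onto the splitting $\nn=\bb\oplus\pp\oplus\qq$ of Proposition \ref{classiftori}, where $\pp=[\nn,\nn]=\RR\mathbf{n}_4$, $\tt=\RR\mathbf{n}_3$, $\zz=\tt\oplus\pp$ and $\qq=\psi\pp$. Throughout I would use that $h$ is reduced (so $\partial_ph_{\aa}=\partial_ah_{\aa}=0$), that Equation \eqref{reducedCR1} reads $\partial_bh_{\aa}=\psi\partial_ah_{\zz}=\Psi_{\theta}\partial_ah_{\zz}$, that $\partial_ph_3\ne0$ forces $\tau_1=\partial_qh_3/\partial_ph_3$ and hence $\partial_bh_3=0$, that Equation \eqref{reducedCR2} gives $\partial_bh_{\zz}=[C_{\aa},\partial_bh_{\aa}]$, and the identifications $\psi|_{\zz}=\Psi_{\theta}$, $\psi|_{\aa}=-\Psi_{\theta}^{-1}$ together with $\alpha=r\,\Psi_{\theta}J_0$ for $J_0=\left(\begin{smallmatrix}0&-1\\1&0\end{smallmatrix}\right)$ coming from the description of $T_{\psi_{\theta}}\WW$.

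First I would peel off the constraints that hold for every $\epsilon$. The $\mathbf{n}_3$-component of the $\zz$-equation is $\tfrac{\eta_1}{\tau_2}\partial_ph_3=0$, because $[\xi_{\aa},\partial_bw]$ lies in $\pp$ and $(\partial_bw)_{\zz}$ has vanishing $\mathbf{n}_3$-part since $\partial_bh_3=0$; hence $\eta_1=0$. Multiplying the $\aa$-equation on the left by $\Psi_{\theta}^{-1}$ and using $\partial_bh_{\aa}=\Psi_{\theta}\partial_ah_{\zz}$ turns it into $\big(rJ_0+\tfrac{\eta_2}{\tau_2}\big)\partial_ah_{\zz}+\tfrac{\epsilon}{4}[\psi\orvar,\partial_bh_{\aa}]=0$ in $\zz$; the bracket again lands in $\pp$, so its $\mathbf{n}_3$-part reads $\tfrac{\eta_2}{\tau_2}\partial_ph_3=r\,\partial_ph_4$, determining $\eta_2$ from $r$. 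Consequently the only genuinely free data are $r$, the component of $\xi_{\aa}$ along $\partial_bh_{\aa}$, and the component of $\orvar$ along $\partial_ah_{\zz}$: the $\mathbf{n}_4$-part of the $\zz$-equation, $(1-\tfrac{\epsilon}{4})[\xi_{\aa},\partial_bh_{\aa}]+\tfrac{\eta_2}{\tau_2}(\partial_bw)_{\zz}=0$ in $\pp$, pins down the remaining component of $\xi_{\aa}$ (the functional $v\mapsto[v,\partial_bh_{\aa}]$ on $\aa$ is nonzero with kernel $\RR\partial_bh_{\aa}$), and the $\mathbf{n}_4$-part of the $\aa$-equation, $\tfrac{r}{\partial_ph_3}|\partial_ah_{\zz}|^2=-\tfrac{\epsilon}{4}[\psi\orvar,\partial_bh_{\aa}]$, pins down the remaining component of $\orvar$ (the functional $Q\mapsto[\psi Q,\partial_bh_{\aa}]$ on $\zz$ is nonzero with kernel $\RR\partial_ah_{\zz}$). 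This already shows the solution space is three-dimensional.

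It then remains to exhibit a basis. Setting $r=0$ kills $\eta_2$ and both transverse components and leaves exactly \eqref{Sol1} ($\orvar=\partial_ah_{\zz}$, all else zero) and \eqref{Sol2} ($\xi_{\aa}=\partial_bh_{\aa}$, all else zero), each of which one checks solves both equations using $\Psi_{\theta}\partial_ah_{\zz}=\partial_bh_{\aa}$; setting $r=\partial_ph_3$ (so $\eta_2=\partial_ph_4\tau_2$, $\eta_1=0$) and solving the two $\mathbf{n}_4$-equations produces \eqref{Sol3}. Here the $\xi_{\aa}$ coefficient comes out because $[\xi_{\aa},\partial_bh_{\aa}]_4=\lambda|\partial_bh_{\aa}|^2$ for $\xi_{\aa}=\lambda(-\partial_bh_2,\partial_bh_1)$ and because $(\partial_bw)_{\zz,4}=\partial_bh_4+\tfrac12[\partial_bh_{\aa},C_{\aa}]_4=\tfrac12 C_{[2}\partial_bh_{1]}$, the last step using $\partial_bh_4=[C_{\aa},\partial_bh_{\aa}]_4$ from Equation \eqref{reducedCR2}; the vector $\orvar$ in \eqref{Sol3} is then the transverse solution of $[\psi\orvar,\partial_bh_{\aa}]=-\tfrac{4}{\epsilon}|\partial_ah_{\zz}|^2\mathbf{n}_4$ (so this description is for $\epsilon\ne0$, the case $\epsilon=0$ being Lemma \ref{kernli}). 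Linear independence of \eqref{Sol1}, \eqref{Sol2}, \eqref{Sol3} is immediate since they are detected respectively by the $\orvar$-, the $\xi_{\aa}$- and the $r$-coordinates. I expect the only real work, and hence the main obstacle, to be this last bookkeeping: tracking the Baker--Campbell--Hausdorff correction in $(\partial_bw)_{\zz}$ and the identifications $\psi|_{\zz}=\Psi_{\theta}$, $\psi|_{\aa}=-\Psi_{\theta}^{-1}$ carefully enough to land on exactly the coefficients written in \eqref{Sol1}--\eqref{Sol3}.
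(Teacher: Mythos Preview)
Your proposal is correct and follows essentially the same route as the paper's proof, which simply verifies the three solutions using Equations \eqref{reducedCR1}, \eqref{reducedCR2} and the commutator identity $[-\partial_bh_2{\bf n}_1+\partial_bh_1{\bf n}_2,\partial_bh_1{\bf n}_1+\partial_bh_2{\bf n}_2]=|\partial_bh_{\aa}|^2{\bf n}_4$. Your version is more thorough in that you first pin down $\eta_1=0$ and $\eta_2=r\partial_ph_4\tau_2/\partial_ph_3$ from the $\mathbf{n}_3$-components and then argue directly that the remaining $\mathbf{n}_4$-components determine the transverse parts of $\xi_{\aa}$ and $\orvar$, thereby establishing the dimension count rather than merely exhibiting three independent solutions.
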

\begin{proof}
The second solution is obvious; the first follows from Equations \eqref{reducedCR1} and \eqref{ORN1}; the third follows from Equations \eqref{reducedCR2} and \eqref{ORN2} and the fact that
\[\left[-\partial_bh_2{\bf n}_1+\partial_bh_1{\bf n}_2,\partial_bh_1{\bf n_1}+\partial_bh_2{\bf n}_2\right]=|\partial_bh_{\aa}|^2{\bf n}_4.\]
\end{proof}

The canonical orientation is now given by picking the oriented basis \eqref{Sol1}, \eqref{Sol2} (which are related by $\psi$, thanks to Equation \eqref{reducedCR1}) and \eqref{Sol3} in that order. When we enumerate the tori we will be able to assume after an $\OP{SL}(2,\ZZ)$-transformation that $\partial_ph_3>0$ so \eqref{Sol3} is positively oriented relative to the base $\WW$. We rescale \eqref{Sol3} by $\epsilon$ and let $\epsilon$ tend to zero. Solution \eqref{Sol3} becomes
\begin{align*}\xi_{\aa}&=0,&\orvar&=4\left(\begin{array}{cc}\cos\theta& -\sin\theta\\ \sin\theta&\cos\theta\end{array}\right)\left(\begin{array}{c}\partial_qh_4\\ -\partial_qh_3\end{array}\right),&r&=0,&\eta&=0.\end{align*}
Note that by \eqref{reducedCR1},
\[\left(\begin{array}{cc}\cos\theta& -\sin\theta\\ \sin\theta&\cos\theta\end{array}\right)\left(\begin{array}{c}\partial_qh_4\\ -\partial_qh_3\end{array}\right)=\left(\begin{array}{c}\partial_ph_4\\ -\partial_ph_3\end{array}\right).\]
By Corollary \ref{mod-tang} and Lemma \ref{translwlog} we know that the unparametrised moduli space $\mM(\WW,A)$, consisting of curves $He^{C_0+D}$, admits a reparametrisation action which one can use to ensure that the \eqref{Sol1} and \eqref{Sol2} components of $D$ vanish so that
\[D=\lambda\left(\begin{array}{c}
0\\
0\\
\partial_ph_4\\
-\partial_ph_3
\end{array}\right).\]
The moduli space $\mM_{1,1}(\WW,A)$ therefore consists of triples $(He^{C_0+D(\lambda)},p,q)$  with $(p,q)\in T^2$. We have shown that the orientation on the moduli space is precisely the one given by the three-form $d\lambda\wedge dp\wedge dq$.


\section{Enumeration of tori in $K$}\label{enumtori}

The aim of this section is to compute the Gromov-Witten invariant in a non-zero homology class $A=\sum A_{ij}E_{ij}$. By Lemma \ref{wlog} we can transform this homology class by an automorphism $\phi$ of $\Gamma$ to a class $\phi_*A=[m,m,n,n]$ where $m=\gcd(A_{13},A_{23})$ and $n=\gcd(A_{14},A_{24})$. Equation \eqref{gwinvariance} implies that
\[\phi_*\GW_{1,1}(\WW,A)=\GW_{1,1}(\WW,\phi_*A)\]
so without loss of generality we can therefore assume that $A=[m,m,n,n]$ for the sake of computing its Gromov-Witten invariants.

For such a class, Theorem \ref{cleanthm} tells us that
\begin{itemize}
\item if $m=0$ then the Gromov-Witten invariant vanishes,
\item if $m\neq 0$ then the moduli space $\mM_{1,1}(\WW,A)$ is regular.
\end{itemize}
We therefore restrict to the case $m\neq 0$. By Lemma \ref{reduction},
\[\mM_{1,1}(\WW,A)=\mM^{\mathrm{ful}}(\WW,A)\times_{T^2}T^2\]
which is a union over all fully reduced homomorphisms $\rho$ with $[\rho]=A$ of
\[\mM_{\rho}(\WW)\times_{T^2}T^2\]
In terms of the Lie algebra homomorphism $h\colon\RR^2\to\nn$ (the logarithm of the unique homomorphic extension $H\colon\RR^2\to N$ of $\rho$), the fully reduced homomorphisms have the matrix form
\[\left(\begin{array}{cc}
\partial_ph_1 & \partial_qh_1\\
\partial_ph_2 & \partial_qh_2\\
\partial_ph_3 & \partial_qh_3\\
\partial_ph_4 & \partial_qh_4
\end{array}\right)=\left(\begin{array}{cc}
0 & -\sgn(m)d\\
0 & -\sgn(m)d\\
\frac{|m|}{d} & \partial_qh_3\\
-\frac{n}{\sgn(m)d} & \partial_qh_4
\end{array}\right).\]
where $d$ is a positive divisor of $\gcd(m,n)$, $\partial_qh_3\in\ZZ$, $0\leq \partial_qh_3<\frac{|m|}{d}$ and $\partial_qh_4+\frac{1}{2}\partial_qh_1\partial_qh_2\in\ZZ$.

We can now use the concrete description of the moduli space given in Corollary \ref{mod-tang} and its orientation as given in Section \ref{orient} to describe the evaluation cycle. The moduli space consists of maps $He^{C_0+D(\lambda)}$ where $H=\exp(h)$ and
\[D(\lambda)=\lambda\left(\begin{array}{c}
0\\
0\\
\partial_ph_4\\
-\partial_ph_3
\end{array}\right),
\ C_0=\left(\begin{array}{c}
0\\
\frac{\partial_qh_4-\tau_1\partial_ph_4}{\partial_qh_1}\\
0\\
0
\end{array}\right),\ \lambda\in\left[0,\frac{1}{\gcd\left(\partial_ph_3,\partial_ph_4\right)}\right].\]
Note that $h$ determines $\tau_1$ and hence also $C_0$. For a Lie algebra homomorphism $h$ and a real number $\lambda$ we denote by $u(\lambda,h)\colon T^2\to K$ the curve represented by
\[He^{C_0+D(\lambda)}\colon \RR^2\to N.\]
\begin{lma}
Let $k\in\ZZ$ and let $\rho\colon\ZZ^2\to\Gamma$ be a reduced homomorphism with underlying Lie algebra homomorphism $h$. Consider $\rho'$, the modified homomorphism whose underlying Lie algebra map $h'$ has the same derivatives as $h$ except that
\[\partial_qh'_4=\partial_qh_4+k.\]
Then the tori $u(\lambda,h)$ and $u(\lambda,h')$ are equal if and only if $k\in(\partial_qh_1)\ZZ=d\ZZ$.
\end{lma}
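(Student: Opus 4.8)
The plan is to turn the equality $u(\lambda,h)=u(\lambda,h')$ into an explicit equation inside $N$ and solve it with the Baker--Campbell--Hausdorff formula. Write $w=\log(He^{C_0+D(\lambda)})$ and $w'=\log(H'e^{C_0'+D(\lambda)})$ for the logarithms of the two given lifts $\RR^2\to N$. Since $N\to K=\Gamma\backslash N$ is a regular covering whose deck group $\Gamma$ acts freely, any map $\RR^2\to K$ has a set of lifts to $N$ permuted simply transitively by left multiplication by $\Gamma$; hence there is a \emph{unique} $\gamma_0\in N$ with $\exp(w')=\gamma_0\exp(w)$, and $u(\lambda,h)=u(\lambda,h')$ if and only if this $\gamma_0$ lies in $\Gamma$. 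So the lemma reduces to computing $\gamma_0$ and testing its integrality.

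First I would compute $w'-w$. Because $h$ is linear and the only altered derivative is $\partial_qh_4\mapsto\partial_qh_4+k$, we have $h'=h+kq\,\mathbf{n}_4$ as linear maps; the number $\tau_1=\partial_qh_3/\partial_ph_3$ and the central vector $D(\lambda)$ (which depends only on $\partial_ph_3,\partial_ph_4$) are unchanged, while \eqref{c0def} gives $C_0'=C_0+\tfrac{k}{\partial_qh_1}\mathbf{n}_2$. Expanding $w=h+C_0+D(\lambda)+\tfrac12[h,C_0]$ (using $[h,D(\lambda)]=0$ since $D(\lambda)\in\zz$) and likewise for $w'$, and using that $\mathbf{n}_3,\mathbf{n}_4$ are central, that $[\mathbf{n}_1,\mathbf{n}_2]=-\mathbf{n}_4$, and that $h$ is reduced (so $h_1(p,q)=q\,\partial_qh_1$), the Baker--Campbell--Hausdorff bookkeeping collapses to
\[w'=w+\frac{k}{\partial_qh_1}\,\mathbf{n}_2+\frac{kq}{2}\,\mathbf{n}_4 .\]

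Next I would solve $\exp(w')=\exp(g_0)\exp(w)$ for $g_0=\log\gamma_0\in\nn$, which by Baker--Campbell--Hausdorff is equivalent to $g_0+\tfrac12[g_0,w]=w'-w$. Splitting $\nn=\aa\oplus\zz$ and using $[g_0,w]=[g_0^{\aa},w^{\aa}]$ with $w^{\aa}=h^{\aa}+C_0$, the $\aa$-component forces $g_0^{\aa}=\tfrac{k}{\partial_qh_1}\mathbf{n}_2$ (in particular the $\mathbf{n}_1$-component of $g_0$ vanishes), and substituting this back the $\zz$-component forces $g_0^{\zz}=0$. Hence $g_0=\tfrac{k}{\partial_qh_1}\mathbf{n}_2$, and exponentiating shows $\gamma_0$ is the affine transformation $x\mapsto x+\tfrac{k}{\partial_qh_1}$, fixing $y,z,t$. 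This lies in $\Gamma$ precisely when $\partial_qh_1\mid k$; since the reduced $h$ has the normal form \eqref{normform}, $\partial_qh_1=-\sgn(m)d$, so the condition is exactly $k\in(\partial_qh_1)\ZZ=d\ZZ$, as claimed.

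The bracket manipulations in the middle step are routine; the point that deserves care is the first step — the assertion that $\exp(w')=\gamma_0\exp(w)$ has a genuinely \emph{unique} solution $\gamma_0\in N$, with no hidden central ambiguity, so that the lemma really is equivalent to $\gamma_0\in\Gamma$. This is guaranteed by the simply-transitive action of the deck group $\Gamma$ on fibres of $N\to K$, and it is also visible directly at the end, where the Baker--Campbell--Hausdorff equation for $g_0$ pins down a unique element of $\nn$.
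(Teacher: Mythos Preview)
Your proof is correct and follows essentially the same route as the paper: both arguments use the Baker--Campbell--Hausdorff formula to show that the two lifts differ by left-multiplication by $\exp\!\bigl(\tfrac{k}{\partial_qh_1}\mathbf{n}_2\bigr)$, then check when this element lies in $\Gamma$. The only cosmetic difference is that the paper manipulates products of exponentials directly at the group level, whereas you first compute $w'-w$ at the Lie-algebra level and then solve for $g_0$; the underlying computation is the same.
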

\begin{proof}
Under this change, $C_0$ changes to $C'_0=C_0+\frac{k}{\partial_qh_1}{\bf n}_2$. We have
\[\exp(h')\exp(C'_0+D(\lambda))=\exp\left(h+qk{\bf n}_4\right)\exp\left(\frac{k}{\partial_qh_1}{\bf n}_2\right)\exp(C_0+D(\lambda))\]
and
\begin{align*}
\exp\left(h+qk{\bf n}_4\right)\exp\left(\frac{k}{\partial_qh_1}{\bf n}_2\right)&=\exp\left(h+qk{\bf n}_4+\frac{k}{\partial_qh_1}{\bf n}_2-\frac{1}{2}qk{\bf n}_4\right)\\
&=\exp\left(h+\frac{k}{\partial_qh_1}{\bf n}_2+\frac{1}{2}qk{\bf n}_4\right)\\
&=\exp\left(\frac{k}{\partial_qh_1}{\bf n}_2\right)\exp(h)
\end{align*}
and this agrees with $\exp(h)$ modulo the right action of $\Gamma$ if and only if $\partial_qh_1$ divides $k$.
\end{proof}
\begin{cor}
Let $A=[m,m,n,n]\in H_2(K;\ZZ)$ be a homology class with $m\neq 0$. For each divisor $d$ of $\gcd(m,n)$ there are $|m|/d$ values of $\partial_qh_3$ and $d$ values of $\partial_qh_4$ giving distinct tori and hence $\mM_{1,1}(\WW,A)$ has $|m|\sigma_0(\gcd(m,n))$ components.\qed
\end{cor}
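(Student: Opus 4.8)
The plan is to read the count off directly from the normal form \eqref{normform} for fully reduced homomorphisms, together with the identification of tori supplied by the preceding lemma. Since $m\neq 0$, Lemma~\ref{reduction} gives $\mM_{1,1}(\WW,A)=\mM^{\mathrm{ful}}(\WW,A)\times_{T^2}T^2$, which is covered by the pieces $\mM_\rho(\WW)\times_{T^2}T^2$ as $\rho$ runs over fully reduced homomorphisms with $[\rho]=A$. By Corollary~\ref{mod-tang} each such piece is a quotient of $[0,\ell]\times T^2$ with $\ell=1/\gcd(\partial_ph_3,\partial_ph_4)$, hence connected; and two such pieces either coincide or are disjoint, because $u\in\mM_\rho(\WW)$ exactly when a lift of $u$ can be written as $He^{C}$ for the homomorphic extension $H$ of $\rho$, and the preceding lemma describes precisely when two fully reduced $H,H'$ produce the same torus. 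Thus the connected components of $\mM_{1,1}(\WW,A)$ are exactly the \emph{distinct} pieces $\mM_\rho(\WW)\times_{T^2}T^2$, and the statement reduces to counting fully reduced $\rho$ with $[\rho]=A$ modulo this coincidence.

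First I would list the fully reduced homomorphisms. By Lemma~\ref{reparawlog} and Equation~\eqref{normform}, a fully reduced $h$ with $[h]=A=[m,m,n,n]$ is specified by a positive divisor $d\mid\gcd(m,n)$ together with $\partial_qh_3$ and $\partial_qh_4$, where the full-reduction constraint $0\le\partial_qh_3<\partial_ph_3=|m|/d$ permits exactly $|m|/d$ values of $\partial_qh_3$, while the descent (integrality) condition $\partial_qh_4+\tfrac12\partial_qh_1\partial_qh_2=\partial_qh_4+\tfrac12 d^2\in\ZZ$ confines $\partial_qh_4$ to a fixed coset of $\ZZ$ in $\RR$.

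Next I would decide which of these data give the same torus. With $d$ and $\partial_qh_3$ fixed, the preceding lemma says that shifting $\partial_qh_4$ by an integer $k$ yields the same torus if and only if $d=\partial_qh_1$ divides $k$; so for each pair $(d,\partial_qh_3)$ the admissible $\partial_qh_4$ contribute exactly $d$ distinct tori, namely the coset of $\ZZ$ modulo $d\ZZ$. On the other hand distinct pairs $(d,\partial_qh_3)$ always give distinct pieces: by Lemma~\ref{enum} the domain modulus on the piece associated to $h$ is $\tau_1=\partial_qh_3/\partial_ph_3=d\,\partial_qh_3/|m|$ and $\tau_2=\|\partial_qh_\aa\|/\|\partial_ph_\zz\|=d^2\sqrt 2/\sqrt{m^2+n^2}$, which is constant on that piece by Corollary~\ref{mod-tang}; since $d\mapsto\tau_2$ is injective on positive integers and then $\tau_1$ recovers $\partial_qh_3$, different $(d,\partial_qh_3)$ give pieces with different domain complex structures. (Equivalently, $d=\partial_qh_1$ and $\partial_qh_3$ are unchanged under conjugating a reduced homomorphism by an element of $\Gamma$ — conjugation in this two-step group only alters the $[\nn,\nn]$-component — so by the uniqueness of fully reduced representatives in Lemma~\ref{reparawlog} the only possible coincidences are the $\partial_qh_4$-shifts.)

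Assembling the count, the number of connected components of $\mM_{1,1}(\WW,A)$ is
\[\sum_{d\mid\gcd(m,n)}\frac{|m|}{d}\cdot d=\sum_{d\mid\gcd(m,n)}|m|=|m|\,\sigma_0(\gcd(m,n)),\]
as claimed, where $\sigma_0$ is the number-of-divisors function. The main obstacle is the bookkeeping in the previous paragraph — confirming that the $\partial_qh_4$-shift of the preceding lemma accounts for \emph{all} coincidences among the pieces $\mM_\rho(\WW)\times_{T^2}T^2$ — but this is forced once one observes that the domain modulus $\tau$ is locally constant on $\mM_{1,1}(\WW,A)$ when $m\neq 0$ and already separates distinct values of $(d,\partial_qh_3)$.
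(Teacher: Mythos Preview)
Your proof is correct and follows the same approach as the paper, which simply marks the corollary with a \qed{} after the preceding lemma. You spell out the bookkeeping in more detail than the paper does: in particular, your observation that the domain modulus $(\tau_1,\tau_2)$ separates the pairs $(d,\partial_qh_3)$ is a clean way to confirm that the only coincidences among fully reduced representatives are the $\partial_qh_4$-shifts identified by the lemma, a point the paper leaves implicit.
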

We need to calculate the homology class of the evaluation cycle for each of these components. For simplicity, we first ignore the equivalence relation $(u,z)\sim (u\circ\phi^{-1},\phi(z))$ for $\phi\in\Aut(u)$ mentioned in Corollary \ref{mod-tang}; this means we are passing to an $|\Aut(u)|$-sheeted cover of the moduli space which we write $\mM'_{1,1}(\WW,A)$. We will later divide out by the size of the automorphism group to compensate for this.

Using the coordinates $(\lambda,p,q)\mapsto\left(He^{C_0+D(\lambda)},p+iq\right)$ on $\mM'_{1,1}(\WW,A)$, the evaluation map sends $(\lambda,p,q)$ to
\[\mbox{\small $
\left(\left(\begin{array}{cccc}
1 & q\partial_qh_2+\frac{\partial_qh_4-\tau_1\partial_ph_4}{\partial_qh_1} & R& 0\\
0 & 1 & q\partial_qh_1 & 0\\
0 & 0 & 1 & 0\\
0 & 0 & 0 & \lambda\partial_ph_4+p\partial_ph_3+q\partial_qh_3
\end{array}\right),\psi\right)\in K\times\WW$}
\]
where $\psi$ is the unique complex structure for which $He^{C_0+D(\lambda)}$ is $(j_{\tau},J_{\psi})$-holomorphic,
\[R=-\lambda\partial_ph_3+p\partial_ph_4+q\partial_qh_4+\frac{1}{2}q^2\partial_qh_1\partial_qh_2\]
and $\lambda\in\left[0,\frac{d}{\gcd(m,n)}\right]$. Since $\psi$ is determined by the derivatives of the Lie algebra homomorphism $h$, see Equation \eqref{psiequn}, it is constant over each component of the moduli space and the evaluation map can be thought of as a 3-cycle in $K$. This cycle represents the three-dimensional homology class
\begin{gather*}-(\partial_qh_1E_{134}+\partial_qh_2E_{234})\frac{|\partial_ph_{\zz}|^2}{\gcd(\partial_ph_3,\partial_ph_4)}\\
=\OP{sgn}(m)\frac{m^2+n^2}{\gcd(m,n)}(E_{134}+E_{234})\end{gather*}
as we can see by integrating the forms $\mathbf{e}_{ijk}$ pulled back along the map $\exp\left(h+C+\frac{1}{2}[h,C]\right)$.

As we remarked above, we are currently overcounting because we have not divided out by the equivalence relation $(u,z)\sim (u\circ\phi^{-1},\phi(z))$ for $\phi\in\Aut(u)$. By Lemma \ref{autosize}, if we write $k=\partial_qh_4+\frac{\partial_qh_1\partial_qh_2}{2\gcd(\partial_qh_1,\partial_qh_2)}$ and $\ell=\partial_qh_3$ then the torus corresponding to the choice of $d$ dividing $\gcd(m,n)$, $0<k\leq d$ and $0<\ell\leq |m|/d$ contributes
\[\frac{1}{\gcd(\gcd(m,n),mk+n\ell)}\]
This gives an factor of
\[(\dagger)=\sum_{d|\gcd(m,n)}\sum_{k=1}^{d}\sum_{\ell=1}^{|m|/d}\frac{1}{\gcd(\gcd(m,n),(mk+n\ell)/d)}.\]
\begin{lma}\label{ickymess}
\[\sum_{d|\gcd(m,n)}\sum_{k=1}^{d}\sum_{\ell=1}^{|m|/d}\frac{1}{\gcd(\gcd(m,n),(mk+n\ell)/d)}=\frac{|m|}{\gcd(m,n)^2}\sum_{d|\gcd(m,n)}d^2.\]
\end{lma}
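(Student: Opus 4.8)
The plan is to evaluate the triple sum by first collapsing the inner two sums over $k$ and $\ell$ for each fixed divisor $d$ of $g:=\gcd(m,n)$. Write $m'=m/d$, $n'=n/d$, so that $(mk+n\ell)/d = m'k+n'\ell$, and the inner double sum becomes
\[
S_d \;=\; \sum_{k=1}^{d}\sum_{\ell=1}^{|m|/d}\frac{1}{\gcd\!\big(g,\;m'k+n'\ell\big)}.
\]
First I would observe that the summand depends on $k$ only through $m'k+n'\ell\bmod g$. As $k$ ranges over a complete residue system $\{1,\dots,d\}$ modulo $d$, and since $\gcd(m',n')\cdot d$-type considerations show $\gcd(m',g)$ and $\gcd(n',g)$ interact simply (note $m'=m/d$ and $g\mid m$ force $d\mid g$, so $g/d$ is an integer and $m' = (m/g)\cdot(g/d)$), the key step is to understand the distribution of $m'k+n'\ell$ modulo $g$. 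The cleanest route: fix $\ell$; as $k$ runs over $1,\dots,d$, the value $m'k$ runs over a coset pattern modulo $g$ that repeats with period $g/\gcd(m',g)$, but since $k$ only runs up to $d$ (not $g$) one must track how $d$ compares to $g/\gcd(m',g)$. Because $m/g$ and $n/g$ are coprime, one of $\gcd(m',g),\gcd(n',g)$ controls things; I expect that after summing over both $k$ and $\ell$ the dependence on the individual gcds washes out and $S_d$ simplifies to $(|m|/d)\cdot\big(\text{something depending only on }d\text{ and }g\big)$.

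The concrete target, working backwards from the claimed answer $\frac{|m|}{g^2}\sum_{d\mid g}d^2$, is to show $S_d = \dfrac{|m|\,d}{g^2}\cdot (g/d) \cdot ?$ — more precisely I would aim to prove the identity $S_d = \dfrac{|m|}{d}\cdot \dfrac{d^3}{g^2}\cdot\dfrac{1}{?}$; reconciling constants, the goal is
\[
\sum_{d\mid g} S_d \;=\; \frac{|m|}{g^2}\sum_{d\mid g} d^2.
\]
The natural mechanism is a Dirichlet-convolution / Möbius-inversion argument: write $\frac{1}{\gcd(g,x)} = \frac{1}{g}\sum_{e\mid \gcd(g,x)}\phi(e)\cdot\frac{?}{?}$ — actually the useful identity is $\gcd(g,x)=\sum_{e\mid\gcd(g,x)}\phi(e)$, but we have the reciprocal, so instead I would use $\frac{1}{\gcd(g,x)} = \frac{1}{g}\sum_{e\mid g,\ e\mid x}\phi(g/e)\cdot\frac{e}{?}$; the correct clean statement is $\sum_{e\mid n}\phi(e)/e \cdot [\text{stuff}]$. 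Rather than guess, I would instead count directly: for fixed $d$, the number of pairs $(k,\ell)\in\{1,\dots,d\}\times\{1,\dots,|m|/d\}$ with $\gcd(g, m'k+n'\ell)=e$ (for each $e\mid g$) is a product of a congruence count mod $g$ (independent of the ranges once they cover full periods) times a ratio of the range sizes to those periods. Summing $1/e$ against these counts and then over $d\mid g$ is the computation that must land on the right-hand side.

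The main obstacle I anticipate is precisely the congruence bookkeeping in $S_d$: the ranges $k\in\{1,\dots,d\}$ and $\ell\in\{1,\dots,|m|/d\}$ are \emph{not} full periods modulo $g$ in general, so one cannot naively replace the count of solutions by (range size)/(period). I would handle this by exploiting $\gcd(m/g,\,n/g)=1$: this coprimality guarantees that the linear form $m'k+n'\ell$ (mod $g$) is, for suitable ranges, equidistributed over exactly the multiples of $d':=\gcd(m',n',g)$, and one checks $d'$ equals $d$ up to the factor coming from $g/d$ — in fact since $m'=(m/g)(g/d)$ and $n'=(n/g)(g/d)$ with $m/g,n/g$ coprime, we get $\gcd(m',n')=g/d$, so $\gcd(m',n',g)=g/d$ as well (since $g/d\mid g$). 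Thus $m'k+n'\ell$ always lies in $(g/d)\mathbf{Z}/g\mathbf{Z}$, a subgroup of order $d$, and equidistributes there as $(k,\ell)$ range over $\{1,\dots,d\}\times\{1,\dots,|m|/d\}$ (the $k$-range has exactly the right length $d$ to hit each class once, for each $\ell$; then summing over $\ell$ multiplies by $|m|/d$). Once this equidistribution is nailed down, $S_d = \frac{|m|}{d}\sum_{j=0}^{d-1}\frac{1}{\gcd(g,\,(g/d)j)} = \frac{|m|}{d}\sum_{j=0}^{d-1}\frac{1}{(g/d)\gcd(d,j)} = \frac{|m|}{g}\sum_{j=0}^{d-1}\frac{1}{\gcd(d,j)}$, and the standard identity $\sum_{j=0}^{d-1}\frac{1}{\gcd(d,j)} = \sum_{c\mid d}\frac{\phi(d/c)}{c} = \frac{1}{d}\sum_{c\mid d}c\,\phi(d/c)$ combined with $\sum_{c\mid d}c\,\phi(d/c) = \sum_{c\mid d}c\cdot(\text{multiplicative})$ — which evaluates via $\mathrm{Id}*\phi$ — reduces the remaining sum $\sum_{d\mid g}\frac{1}{g}\cdot\frac{1}{d}\sum_{c\mid d}c\phi(d/c)$ to $\frac{1}{g}\sum_{d\mid g}\frac{(\mathrm{Id}*\phi)(d)}{d}$, and unwinding the Dirichlet convolution over all $d\mid g$ gives $\frac{1}{g}\cdot\frac{1}{g}\sum_{d\mid g}d^2 = \frac{|m|}{g^2}\sum_{d\mid g}d^2$ after reinstating the $|m|$. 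The delicate point throughout is verifying the equidistribution claim rigorously rather than heuristically; everything else is routine arithmetic-function manipulation.
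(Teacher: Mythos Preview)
Your overall strategy is the same as the paper's: reduce the inner double sum to an equidistribution statement, arriving at
\[
(\dagger)=\frac{|m|}{g}\sum_{d\mid g}\sum_{\lambda=1}^{d}\frac{1}{\gcd(d,\lambda)},
\]
and then evaluate this double sum. That reduction and target are correct.

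However, your justification of the equidistribution step is wrong, and since you flag it as ``the delicate point'' it is worth being precise. You claim that for each fixed $\ell$, as $k$ runs over $\{1,\dots,d\}$ the residues $m'k+n'\ell\pmod g$ hit each element of $(g/d)\ZZ/g\ZZ$ exactly once. This fails: take $m=12$, $n=6$, so $g=6$, $\bar m=2$, $\bar n=1$; with $d=2$ one has $m'=6$ and $m'k\equiv 0\pmod 6$ for every $k$, so varying $k$ does nothing at all. Equidistribution over the pair $(k,\ell)$ is nonetheless true, but it genuinely uses \emph{both} variables and the coprimality $\gcd(\bar m,\bar n)=1$. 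The paper's argument is: for fixed $\ell$, the congruence $\bar m k\equiv \lambda-\bar n\ell\pmod d$ has $\gcd(\bar m,d)$ solutions in $k$ when $\gcd(\bar m,d)\mid(\lambda-\bar n\ell)$ and none otherwise; then summing over $\ell\in\{1,\dots,|m|/d\}$, the condition $\bar n\ell\equiv\lambda\pmod{\gcd(\bar m,d)}$ has exactly $|m|/(d\gcd(\bar m,d))$ solutions because $\gcd(\bar n,\gcd(\bar m,d))=1$. The two factors cancel to give $|m|/d$ pairs for every residue $\lambda$, which is the equidistribution you want.

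For the endgame, your Dirichlet-convolution manipulation contains a slip (the substitution gives $\sum_{c\mid d}\phi(d/c)/c=\tfrac{1}{d}\sum_{e\mid d}e\,\phi(e)$, not $\tfrac{1}{d}\sum_{c\mid d}c\,\phi(d/c)$), though the computation is indeed routine once set up correctly. The paper dispatches it in one line via Ces\`aro's identity $\sum_{d\mid n}\sum_{i=1}^{d}f(\gcd(i,d))=\sum_{d\mid n}f(n/d)\,d$ applied with $f(x)=1/x$, which is cleaner than unwinding convolutions by hand.
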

Before we prove this lemma we give the formula for the 1-point Gromov-Witten invariant $\GW_{1,1}(\WW,A)$ when $A=[m,m,n,n]$:
\begin{gather*}
\frac{m(m^2+n^2)\sigma_2(\gcd(m,n))}{\gcd(m,n)^3}(E_{134}+E_{234})\otimes[\star]\in H_3(K\times\WW;\ZZ)
\end{gather*}
We now ignore the $[\star]$ factor. Pushing this result forward using Equation \eqref{gwinvariance} allows us to compute the Gromov-Witten invariant $\GW_{1,1}(\WW,A)$ for $A=[ma,mb,na,nb]$ where $\gcd(a,b)=1$:
\begin{equation}\label{gwinvt}\boxed{\GW_{1,1}(\WW,A)=\frac{(m^2+n^2)\sigma_2(\gcd(m,n))}{\gcd(m,n)^3}(maE_{134}+mbE_{234}).}\end{equation}
which proves Theorem \ref{ourmainthm}.\qed
\begin{proof}[Proof of Lemma \ref{ickymess}]
For convenience, define $\mu=\gcd(m,n)$, $\bar{m}=|m|/\mu$, $\bar{n}=n/\mu$ and $\lambda=\bar{m}k+\bar{n}\ell$. We have:
\[\frac{1}{\gcd\left(\gcd(m,n),(mk+n\ell)/d\right)}=\frac{d/\mu}{\gcd(d,\OP{sgn}(m)\bar{m}k+\bar{n}\ell)}\]
We convert the sum over $k$ into a sum over $\lambda$:
\begin{align*}
\sum_{k=1}^d\frac{1}{\gcd\left(d,\OP{sgn}(m)\bar{m}k+\bar{n}\ell\right)}&=\sum_{\lambda=1}^d\frac{\#\{k:\OP{sgn}(m)\bar{m}k+\bar{n}\ell\equiv\lambda\mod d\}}{\gcd(d,\lambda)}\\
&=\sum_{\lambda=1}^d\frac{\gcd(\bar{m},d)}{\gcd(d,\lambda)}\OP{if}\left(\gcd(\bar{m},d)|\lambda-\bar{n}\ell\right)
\end{align*}
where $\OP{if}(X)$ is the Boolean function taking the value 1 if $X$ is true and 0 otherwise. To get this line we use the fact that a linear congruence $ax=y\mod d$ has $\gcd(a,d)$ solutions modulo $d$ if $\gcd(a,d)|y$ and none otherwise. Now perform the sum over $\ell$:
\begin{align*}
\sum_{\ell=1}^{|m|/d}\OP{if}\left(\gcd(\bar{m},d)|\lambda-\bar{n}\ell\right)&=\frac{|m|}{d\gcd(\bar{m},d)}\\
&=\frac{\bar{m}}{\gcd(\bar{m},d)}\frac{\mu}{d}
\end{align*}
since $\lambda-\bar{n}\ell\equiv 0\mod \gcd(\bar{m},d)$ has a unique solution modulo $\gcd(\bar{m},d)$, since $\gcd(\bar{m},\bar{n})=1$, and hence $|m|/d\gcd(\bar{m},d)$ solutions in $\{1,\ldots,|m|/d\}$. Substituting this back into the full formula gives
\begin{align*}(\dagger)&=\sum_{d|\mu}\sum_{\lambda=1}^d\frac{\gcd(\bar{m},d)}{\gcd(d,\lambda)}\frac{d}{\mu}\frac{\bar{m}}{\gcd(\bar{m},d)}\frac{\mu}{d}\\
&=\sum_{d|\mu}\sum_{\lambda=1}^d\frac{\bar{m}}{\gcd(d,\lambda)}\\
&=\frac{|m|}{\gcd(m,n)^2}\sum_{d|\gcd(m,n)}d^2\end{align*}
where in the last line we have used Ces\`{a}ro's formula
\[\sum_{d|n}\sum_{i=1}^df(\gcd(i,d))=\sum_{d|n}f\!\left(\frac{n}{d}\right)d,\]
valid for any arithmetic function $f$: this follows from {\cite[page 129]{Dick}} and the elementary properties of Dirichlet convolutions.
\end{proof}


\section{Acknowledgements}

The authors would like to acknowledge helpful discussions with Paul Biran, Rahul Pandharipande and Dietmar Salamon. An anonymous referee was also extremely helpful. J.E. was supported by an ETH Postdoctoral Fellowship. Our collaboration is supported by EPSRC grant EP/I036044/1.

\end{document}